\newtheorem{theorem}{Theorem}[section]
\newtheorem{lemma}[theorem]{Lemma}
\newtheorem{corollary}[theorem]{Corollary}
\newtheorem{proposition}[theorem]{Proposition}
\theoremstyle{definition}
\newtheorem{definition}[theorem]{Definition}
\newtheorem{remark}[theorem]{Remark}
\newcommand{\End}{{ \rm End }}
\newcommand{\Cok}{{ \rm Cok }}
\newcommand{\Ext}{{ \rm Ext }}
\newcommand{\Hom}{{ \rm Hom }}
\newcommand{\Ker}{{ \rm Ker }\,}
\newcommand{\Mod}{{ \rm Mod }}
\newcommand{\Rad}{{ \rm Rad }}
\newcommand{\Ann}{{ \rm Ann }}
\newcommand{\reg}{{ \rm reg }}
\newcommand{\rad}{{ \rm rad }}
\newcommand{\Q}{{\mathtt{Q}}}
\newcommand{\LQ}{{\mathtt{L}}}
\newcommand{\soc}{{ \rm soc }}
\newcommand{\g}{\hbox{-}}
\newcommand{\uddots}{\mathinner{\mkern1mu\raise1pt\vbox{\kern7pt\hbox{.}}
\mkern2mu\raise4pt\hbox{.}\mkern2mu\raise7pt\hbox{.}\mkern1mu}}
\newcommand{\Endol}[1]{\hbox{\rm endol}\,(#1)}
\newcommand{\hueca}[1]{\mathbb{#1}}
\newcommand{\lddots}{
\mathinner{
\mkern1mu\raise1pt}\vbox{\kern7pt\hbox{.}}
\mkern2mu\raise3pt\hbox{.}
\mkern2mu\raise7pt\hbox{.}\mkern1mu}
\renewcommand{\mod}{{\rm mod }}
\renewcommand{\Im}{{\rm Im}\,}
\newcommand{\rightdashmap}[1]{\smash{\mathop{\hbox to 
20pt{-\,-\,-\,\rightarrowfill}}\limits^{#1}}}
\newcommand{\rightmap}[1]{\smash{\mathop{\hbox to 
20pt{\rightarrowfill}}\limits^{#1}}}
\newcommand{\leftmap}[1]{\smash{\mathop{\hbox to 
20pt{\leftarrowfill}}\limits^{#1}}}
\newcommand{\longrightmap}[1]{\smash{\mathop{\hbox to 
4cm{\rightarrowfill}}\limits^{#1}}}
\newcommand{\longleftmap}[1]{\smash{\mathop{\hbox to 
4cm{\leftarrowfill}}\limits^{#1}}}
\newcommand{\medrightmap}[1]{\smash{\mathop{\hbox to 
2cm{\rightarrowfill}}\limits^{#1}}}
\newcommand{\medleftmap}[1]{\smash{\mathop{\hbox to 
2cm{\leftarrowfill}}\limits^{#1}}}
\newcommand{\shortlmapdown}[1]
{\llap{$\vcenter{\hbox{$\scriptstyle#1$}}$}\big\downarrow}
\newcommand{\shortrmapdown}[1]
{\big\downarrow\rlap{$\vcenter{\hbox{$\scriptstyle#1$}}$}}
\newcommand{\idmapdown}[1]
{\hskip-8pt\mathop{\hskip-5pt\raise6pt
\hbox{$\scriptstyle#1$}\hskip-5pt\swarrow}}
\newcommand{\ddmapdown}[1]
{\hskip-5pt\mathop{\searrow\hskip-6pt\raise5pt\hbox{$\scriptstyle#1$}}}
\newcommand{\idmapup}[1]
{\hskip-5pt\mathop{\nwarrow\hskip-6pt \raise5pt\hbox{$\scriptstyle#1$}}}
\newcommand{\ddmapup}[1]
{\hskip-8pt\mathop{\hskip-5pt\raise6pt\hbox{$\scriptstyle#1$}\hskip-5pt\nearrow}
}
\newcommand{\flechypunt}[2]{\ \smash{\mathop{
   \raise 3pt \hbox to 40pt{\rightarrow}\hskip-40pt \lower 3pt
   \hbox to 40pt{\dashrightarrow}}\limits^{#1}_{#2}}\ }
\newcommand{\longequal}{\ \smash{\mathop{
   \raise 5pt \hbox to 35pt{\hrulefill}\hskip-35pt \lower 0pt
   \hbox to 35pt{\hrulefill}}}\ }
\newcommand{\raya}[1]{\ \smash{\mathop{\raise 2pt \hbox to 
10pt{\hrulefill}}\limits^{#1}}\ }
\title{\bf {\Large A representation embedding for algebras of infinite type }}
\author{R. Bautista, E. P\'erez, L. Salmer\'on}
\begin{document}
  \date{} 
 \maketitle
 
   \begin{abstract}
\noindent 
   We show that for any finite-dimensional algebra $\Lambda$ of infinite representation type, over a perfect field, there is a  bounded principal ideal domain $\Gamma$ and a representation embedding from $\Gamma\g\mod$ into $\Lambda\g\mod$. As an application, we prove a variation of the Brauer-Trall Conjecture II: finite-dimensional algebras of infinite-representation type admit infinite  families of non-isomorphic finite-dimensional indecomposables with fixed endolength, for infinitely many endolengths. 
\end{abstract}

\renewcommand{\thefootnote}{}

\footnote{2020 \emph{Mathematics Subject Classification}:
   16G60, 16G20.}

\footnote{\emph{Keywords and phrases}: representation embedding,  representation type, endolength, differential tensor algebra, bocs, reduction functor.}

  \section{Introduction}
  
  In this note, \emph{$k$ denotes a perfect field, which will act centrally on every algebra or bimodule we consider}. Given any $k$-algebra  $\Lambda$, we denote by $\Lambda\g\Mod$ (resp. $\Lambda\g\mod$) the category of left $\Lambda$-modules (resp. finite-dimensional left $\Lambda$-modules). We denote by $\Lambda\g\mod_{\cal I}$ the full subcategory of $\Lambda\g\mod$
 formed by the modules without injective direct summands.
 
 Recall that the \emph{endolength} of a $\Lambda$-module $M$, denoted by 
 $\Endol{M}$, is by definition the length of the right $\End_{\Lambda}(M)^{op}$-module $M$. We say that $\Lambda$ is \emph{discrete} (resp.
\emph{$e$-discrete}) iff for each dimension $d$ (resp. each endolength $d$) there is only a finite number of distinct isoclasses of indecomposable finite-dimensional
$\Lambda$-modules $M$ with $\dim_k M = d$ (resp. with $\Endol{M} = d$).

In this article, we focus our attention on finite-dimensional algebras $\Lambda$  with infinite representation type. Since the ground field is perfect, this is equivalent 
to not being $e$-discrete, see \cite{ficlifts}. 
So, for such an algebra $\Lambda$ 
 there is at least one number $d\in \hueca{N}$ and an  infinite family  of non-isomorphic indecomposable finite-dimensional $\Lambda$-modules with endolength $d$.

\begin{definition}\label{D: repres embedding}
 Given two $k$-algebras $\Gamma$ and $\Lambda$,    
 a \emph{representation embedding from $\Gamma\g\mod$ to $\Lambda\g\mod$}  is a $k$-linear 
functor $H : \Gamma\g\mod \rightmap{} \Lambda\g\mod$ which  is exact, preserves indecomposability,  and reflects isomorphism classes.
\end{definition}

\begin{definition}\label{D: control endolength}
Given full subcategories ${\cal C}$ of $\Gamma\g\mod$ and ${\cal C'}$ of $\Lambda\g\mod$ we will say that a functor $H:{\cal C}\rightmap{}{\cal C}'$ \emph{controls endolength of indecomposables} iff there are some constants  $c,c'\in \hueca{N}$ such that 
$$\Endol{N}\leq c\times\Endol{H(N)}\hbox{  \ and \ }\Endol{H(N)}\leq c'\times \Endol{N},$$ for any indecomposable $N\in {\cal C}$.
\end{definition}

\begin{definition}\label{D: minimal algebra}
 A $k$-algebra $\Q$ is called \emph{minimal} iff it is of one of the following two types:
 \begin{enumerate}
  \item $\Q = T_{D_1 \times D_2} (V)$, where $D_1$ and $D_2$ are finite-dimensional division $k$-algebras and $V$ is a
simple $D_1\g D_2$-bimodule.
\item $\Q = T_D (V)$, where $D$ is a finite-dimensional division $k$-algebra and $V$ is a simple 
$D\g D$-bimodule.
 \end{enumerate} 
 The \emph{coefficient algebras} of such a minimal algebra $\Q$ are $D_1$ and $D_2$ in the first case, and $D$ in the second one. 
\end{definition}

The \emph{coefficient algebras} of a finite-dimensional algebra $\Lambda$   are, by definition, the opposites of the  endomorphism algebras of the simple $\Lambda$-modules. 
In this note we prove the following result.

\begin{theorem}\label{main thm for algs with minimal's} Let $\Lambda$ be a finite-dimensional $k$-algebra of infinite representation type. Then,  there is a functor  
which preserves indecomposables, reflects isomorphism classes, and controls endolength of indecomposables of one of the following two types:
\begin{enumerate}
 \item $H:\Q\g\mod_{\cal I}\rightmap{}\Lambda\g\mod$, where $\Q$ is a minimal algebra of the first type.
  \item $H:\Q\g\mod\rightmap{}\Lambda\g\mod$, where $\Q$ is a minimal algebra of the second type.
\end{enumerate}
In both cases, the algebra $\Q$ is of infinite representation type and 
with coefficient algebras some of the coefficient algebras of $\Lambda$. Moreover, the functor $H$ is of the form $Z\otimes_\Q-$, for some $\Lambda\g \Q$-bimodule $Z$ which is finitely generated by the right. 
\end{theorem}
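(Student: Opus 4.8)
The plan is to reduce the general finite-dimensional algebra $\Lambda$ to a controlled combinatorial object — a differential tensor algebra (bocs) — and to run a finite sequence of standard reduction functors until a minimal algebra of one of the two listed types appears. First I would pass from $\Lambda\g\mod$ to the module category of an associated layered bocs $\mathfrak{A}$ whose representations capture (a subcategory containing almost all of) $\Lambda\g\mod$; since $k$ is perfect, the coefficient ring of this bocs is a product of the division algebras $\End_\Lambda(S)^{op}$ over the simple $\Lambda$-modules $S$, so any division algebra that survives the reductions will be among the coefficient algebras of $\Lambda$. The comparison functor $\Lambda\g\mod \to \mathfrak{A}\g\mod$ (or its quasi-inverse on the relevant subcategory) is exact, preserves indecomposability and isoclasses, and is given by tensoring with a bimodule finitely generated on the appropriate side; crucially it changes endolength only by a bounded factor, because it amounts to a fixed change of the acting ring.

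Next I would invoke the reduction machinery for bocses: each elementary reduction (regularization, edge reduction, unravelling, loop reduction, deletion of idempotents) is realized by a functor of the form $Z\otimes_{\mathfrak{A}'}-$ with $Z$ finitely generated on the right, is exact, full and faithful enough to preserve indecomposability and reflect isomorphism, and multiplies endolength by a bounded constant (the bound depending only on the finitely many reduction data, not on the module). Composing finitely many such reductions stays within this class, and the constants multiply, so the composite still controls endolength. Because $\Lambda$ has infinite representation type and $k$ is perfect, $\Lambda$ is not $e$-discrete, hence the bocs $\mathfrak{A}$ is not of finite representation type, so the reduction process cannot terminate in a trivial bocs; by the structure theory it must terminate — or can be stopped — at a bocs whose underlying algebra is a minimal algebra $\Q$ of type (1) or (2), with the wild/tame dichotomy irrelevant here since we only need one infinite family. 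The appearance of type (1) forces us to work inside $\Q\g\mod_{\cal I}$ (the bimodule $V$ being simple $D_1\g D_2$ makes the indecomposable injectives the exceptional objects to be excluded), whereas type (2) gives a genuine $T_D(V)$ with $V$ simple $D\g D$, whose module category needs no such restriction.

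Finally I would assemble the functor $H$ as the composite of the comparison functor and all the reduction functors, check that a composite of bimodule-tensor functors finitely generated on the right is again of that form, and record that $\Q$ inherits infinite representation type (otherwise $\Lambda$ would be $e$-discrete, contradicting the hypothesis). The main obstacle is the bookkeeping at the interface between the module-category statement we want and the bocs-representation statement the reduction theory delivers: one must verify that the equivalence between the relevant subcategory of $\Lambda\g\mod$ and the subcategory of $\mathfrak{A}\g\mod$ respects endolength up to a bounded factor, and that excluding injective summands on the $\Q$-side in case (1) corresponds exactly to the objects discarded by the reductions — this is where the definitions of minimal algebra and of $\mod_{\cal I}$ have to be matched precisely. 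Controlling endolength through unravelling (which can increase the number of vertices substantially) is the quantitatively delicate point, but the increase is still by a fixed multiplicative constant determined by the reduction sequence, so Definition~\ref{D: control endolength} is satisfied.
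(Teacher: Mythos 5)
Your overall architecture coincides with the paper's: pass to the Drozd ditalgebra of (a basic version of) $\Lambda$, which is admissible and not $e$-discrete because $k$ is perfect; run a finite sequence of reduction functors until a (quasi-)minimal object appears; and return to $\Lambda\g\mod$ via the equivalence $\Xi_\Lambda$ and the cokernel functor, realizing everything as tensoring with a right-finitely-generated bimodule. However, there is a genuine gap at the decisive step: the assertion that ``by the structure theory it must terminate --- or can be stopped --- at a bocs whose underlying algebra is a minimal algebra'' is exactly what has to be proved, and nothing in your plan supplies a well-founded measure guaranteeing that the reduction process reaches such a stage in finitely many steps. The paper's solution is to fix an infinite family of non-isomorphic indecomposables of constant endolength, attach to each indecomposable $M$ a modified \emph{$e$-norm} $\vert\vert M\vert\vert_e$ (a rescaling of the usual norm $\dim_k\Hom_R(W_0\otimes_R M,M)$ by factors built from the division algebra $E_M/\rad E_M$ and the lcm of the dimensions of the coefficient algebras), prove that deletion of idempotents does not increase it while regularization and reduction by a module strictly decrease it on sincere modules, and then induct on the common value $q$ of the $e$-norm over the family; the base case $q=1$ is where the minimal algebra $k[x]$ materializes. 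Without this (or an equivalent) induction measure, the termination claim is unsupported.

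Two smaller points. First, your explanation of why case (1) requires restricting to $\Q\g\mod_{\cal I}$ is not quite right: the injective indecomposables are not ``discarded by the reductions'' but are lost at the interface with $\Lambda\g\mod$, because for a minimal algebra of the first type only the non-injective indecomposables are sent by $\Xi_\Lambda$ into the subcategory ${\cal P}^2(\Lambda)$ on which $\Cok$ preserves indecomposability and reflects isomorphism. Second, the claim that every division algebra surviving the reductions is a coefficient algebra of $\Lambda$ needs the specific fact that, when one reduces by the full list of indecomposables over a finite-representation-type hereditary initial subalgebra $T_{D_1\times D_2}(W_0^1)$, their endomorphism division algebras coincide with $D_1$ or $D_2$; this is an input from the theory of hereditary algebras, not an automatic feature of the reduction calculus.
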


From this, we derive:

\begin{theorem}\label{main thm for algs with PID's} Let $\Lambda$ be a finite-dimensional $k$-algebra of infinite representation type. Then, there is a  bounded principal ideal domain $\Gamma$ which is not $e$-discrete and a functor  
 $$G:\Gamma\g\mod\rightmap{}\Lambda\g\mod,$$ 
 which preserves indecomposables, reflects isomorphism classes, and controls endolength of indecomposables.  The functor $G$ is of the form $Z\otimes_\Gamma-$, for some $\Lambda\g \Gamma$-bimodule $Z$ which is finitely generated by the right. 
\end{theorem}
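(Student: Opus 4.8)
The plan is to deduce Theorem~\ref{main thm for algs with PID's} from Theorem~\ref{main thm for algs with minimal's} by composing the functor $H$ there with a representation embedding from $\Gamma\g\mod$ into $\Q\g\mod$ (resp. $\Q\g\mod_{\cal I}$) for a suitable bounded principal ideal domain $\Gamma$. So the first step is to understand the module category of a minimal algebra $\Q$. In case~(2), $\Q = T_D(V)$ with $D$ a finite-dimensional division $k$-algebra and $V$ a simple $D\g D$-bimodule; being of infinite representation type forces $\dim_D V_D$ and ${}_D\dim V$ to be such that $\Q$ is not of finite type, and the classical theory (Kronecker-type / tame hereditary analysis for tensor algebras of bimodules) tells us exactly when this happens. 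In case~(1), $\Q = T_{D_1\times D_2}(V)$ and we restrict to $\Q\g\mod_{\cal I}$, the modules without injective summands. In each case I would exhibit inside $\Q\g\mod$ (or its injective-free part) a full exact subcategory equivalent to $\Gamma\g\mod$ for an explicit $\Gamma$.

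The natural candidate for $\Gamma$ is a skew power series ring or, more simply, the ring $\Gamma = D[t]$ or an Ore extension $D[t;\sigma]$ localized appropriately, which is a (noncommutative, if $D\neq k$, but still) bounded principal ideal domain. Concretely, for the Kronecker-type minimal algebra one knows that the ``regular'' tubes of rank one are parametrized so that the finite-dimensional modules supported there, together with their morphisms, form a category equivalent to $\Gamma\g\mod$ where $\Gamma$ is a suitable localization of $D[t]$; alternatively one takes $\Gamma = D[t]$ itself and uses the rational-canonical-form description of its finite-dimensional modules, embedding them into $\Q\g\mod$ via a bimodule $Z'$ that is free of finite rank on the right. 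The key points to verify are: (i) $\Gamma$ is a bounded principal ideal domain — boundedness follows because $\Gamma$ is module-finite over its center, which is a commutative PID, and the PID property is the rational-canonical-form / Smith-normal-form argument over $D$; (ii) $\Gamma$ is not $e$-discrete — because for fixed endolength there are infinitely many indecomposables $\Gamma/(p)^n$-type modules with $p$ ranging over infinitely many primes of the center, or rather, one checks that the image under the embedding yields infinitely many non-isomorphic indecomposables of bounded endolength, inherited from the fact that $\Q$ is of infinite representation type; (iii) the embedding $\Gamma\g\mod\to\Q\g\mod$ is exact, preserves indecomposability, reflects isomorphism classes, and controls endolength — exactness because $Z'$ is right $\Gamma$-flat (in fact free), and endolength control because the endomorphism rings are related by an explicit finite bimodule correspondence giving the two linear inequalities of Definition~\ref{D: control endolength}.

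Having built $G' = Z'\otimes_\Gamma - : \Gamma\g\mod \to \Q\g\mod$ (landing in $\Q\g\mod_{\cal I}$ in case~(1), which requires checking that the modules in the image have no injective summands — true because the image consists of ``regular'' modules, which are never injective over $\Q$), I would set $G = H\circ G' = Z\otimes_\Q(Z'\otimes_\Gamma -) = (Z\otimes_\Q Z')\otimes_\Gamma - = Z''\otimes_\Gamma -$ with $Z'' = Z\otimes_\Q Z'$, a $\Lambda\g\Gamma$-bimodule finitely generated on the right (since $Z$ is right-finitely-generated over $\Q$ and $Z'$ is right-finitely-generated — indeed free of finite rank — over $\Gamma$). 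Preservation of indecomposables and reflection of isomorphism classes compose, and the two endolength inequalities multiply, so $G$ controls endolength. Finally, since $\Q$ is of infinite representation type and $G'$ controls endolength and reflects isoclasses, $\Gamma$ is not $e$-discrete.

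The main obstacle I anticipate is step~(i)–(ii) made uniform: producing a \emph{single} ring $\Gamma$ that works for \emph{all} minimal algebras $\Q$ of infinite type simultaneously, or at least verifying in each of the finitely many ``shapes'' of minimal $\Q$ that the regular part of $\Q\g\mod$ really does contain a copy of $\Gamma\g\mod$ with the bimodule realizing it being right-finitely-generated and flat — this is where the structure theory of tensor algebras of bimodules over division rings (the analogue of the Kronecker quiver and its tubes, but possibly with non-trivial division rings $D$, $D_1$, $D_2$ of different dimensions) has to be invoked carefully, including the subtlety in case~(1) of staying inside $\Q\g\mod_{\cal I}$. A secondary technical point is the precise bound in the $e$-discreteness failure and in the endolength-control constants, which must be tracked through the bimodule $Z'$; but these are bookkeeping once the embedding is in place.
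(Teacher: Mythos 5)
Your overall architecture is exactly the paper's: take the functor $H$ of Theorem~\ref{main thm for algs with minimal's}, build for each minimal algebra $\Q$ a bounded principal ideal domain $\Gamma$ together with a full, faithful, exact functor $\Psi:\Gamma\g\mod\rightmap{}\Q\g\mod$ (landing in $\Q\g\mod_{\cal I}$ in the first case) realized by a right-finitely-generated bimodule, and set $G=H\Psi$, with the endolength inequalities multiplying. The paper does precisely this in Lemmas~\ref{L: minimales gen mansas vs DIP's}, \ref{L: Omega min wild tipo 1} and \ref{L: Omega min tipo 2}, and your observation that control of endolength for $\Psi$ follows from full faithfulness plus right finite generation is the content of Lemma~\ref{L: endols y repembed fieles y plenos}.

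The gap is in the step you yourself flag as ``the main obstacle'': your concrete mechanism (rank-one tubes of regular modules, $\Gamma=D[t]$ or an Ore extension $D[t;\sigma]$ or a localization thereof) only covers part of the case analysis. When $\Q$ is \emph{wild} hereditary (e.g.\ $T_D(V)$ with $\dim_DV\geq 2$, or a type-1 minimal algebra with indefinite form), there is no parametrized family of rank-one tubes to exploit; the paper instead invokes Crawley-Boevey's theorem that such $\Q$ admits a full, faithful, exact embedding of $K\langle x,y\rangle\g\Mod$ via a bimodule free of finite rank on the right ($K$ a finite extension of $k$), and then restricts scalars along $K\langle x,y\rangle\rightmap{}K[x]$ to get $\Gamma=K[x]$ --- a genuinely different idea from the one you propose. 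In the generically tame type-1 cases, the correct $\Gamma$ is a Dlab--Ringel bounded hereditary noetherian domain (over $\hueca{R}$, for instance, $\hueca{R}[x,y]/\langle x^2+y^2+1\rangle$ occurs), not a localization of $D[t]$. Finally, your argument that $\Gamma$ is not $e$-discrete is too quick in the tame case: one must first show that the infinite bounded-endolength families of $\Q$-modules can be taken \emph{regular} (the paper's Proposition~\ref{P: preproj components discrete in heredit case}, that preprojective and preinjective components are $e$-discrete), and then that they do not all sit in the exceptional uniserial summand ${\cal U}$ of $\Q\g\reg$ complementary to $\Im\Psi$ (handled via Proposition~\ref{P: endol(Mn)=n endol(M1) en U unis}, which shows ${\cal U}$ has at most one indecomposable per endolength). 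Neither of these two verifications is bookkeeping, and both are needed before ``$\Gamma$ is not $e$-discrete'' can be transported back through $\Psi$.
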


Then, we will show:

\begin{theorem}\label{main exact thm for algs with PID's} Let $\Lambda$ be a finite-dimensional $k$-algebra of infinite representation type. Then, there is a  bounded principal ideal domain $\Gamma$ which is not $e$-discrete and a
representation embedding  which controls endolength of indecomposables  
 $$G:\Gamma\g\mod\rightmap{}\Lambda\g\mod.$$ 
  Moreover, the functor $G$ is of the form $Z\otimes_\Gamma-$, for some $\Lambda\g \Gamma$-bimodule $Z$ which is free of finite rank by the right. 
\end{theorem}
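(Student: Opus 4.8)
The plan is to deduce Theorem~\ref{main exact thm for algs with PID's} from Theorem~\ref{main thm for algs with PID's} by upgrading the functor $G$ to an exact one while retaining control of endolength. Starting from Theorem~\ref{main thm for algs with PID's}, I have a bounded PID $\Gamma$, not $e$-discrete, and a functor $G = Z\otimes_\Gamma-$ with $Z$ a $\Lambda\g\Gamma$-bimodule finitely generated on the right, that preserves indecomposables, reflects isomorphism classes, and controls endolength. The only missing property is exactness, and the obstruction is precisely the failure of $Z$ to be flat — equivalently torsion-free — as a right $\Gamma$-module. Since $\Gamma$ is a PID, a finitely generated right $\Gamma$-module decomposes as a free part plus a torsion part, and the torsion part is a finite-dimensional $k$-space, so $Z_\Gamma \cong \Gamma^n \oplus T$ with $T$ finite-dimensional.

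\medskip

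The first step is to replace $Z$ by its torsion-free quotient, or rather to argue that one may assume $Z_\Gamma$ is free of finite rank from the outset. Write $Z = Z_0 \oplus T$ as right $\Gamma$-modules with $Z_0$ free of finite rank and $T$ the torsion submodule; note $T$ is a $\Lambda\g\Gamma$-subbimodule only after a little care, but since $T$ is the full torsion submodule it is automatically stable under the left $\Lambda$-action, so $Z_0 := Z/T$ is a $\Lambda\g\Gamma$-bimodule, free of finite rank on the right. Set $G_0 := Z_0\otimes_\Gamma-$. Because $Z_0$ is free, hence flat, on the right, $G_0$ is exact. The task reduces to showing that, after possibly passing to a further localization or a suitable finite extension of $\Gamma$ inside its field of fractions (still a bounded PID, still not $e$-discrete), the functor $G_0$ still preserves indecomposables, reflects isomorphism classes, and controls endolength on finite-dimensional modules.

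\medskip

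For this I would compare $G$ and $G_0$ on finite-dimensional $\Gamma$-modules. There is a natural short exact sequence of bimodules $0\to T\to Z\to Z_0\to 0$, inducing for each $N\in\Gamma\g\mod$ an exact sequence $\Tor_1^\Gamma(Z_0,N)\to T\otimes_\Gamma N\to Z\otimes_\Gamma N\to Z_0\otimes_\Gamma N\to 0$. When $N$ is indecomposable of sufficiently large endolength — which is the only regime that matters, since $\Gamma$ is not $e$-discrete precisely through a cofinal family of such $N$ (the modules $\Gamma/(\pi^m)$ for irreducible $\pi$, or rather $\Gamma/\mathfrak{q}$ for height-one primes $\mathfrak q$) — the torsion contribution $T\otimes_\Gamma N$ is bounded in dimension by $\dim_k T$, hence negligible relative to $\dim_k(Z\otimes_\Gamma N)$, which grows with $\Endol{N}$. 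Thus $G(N)$ and $G_0(N)$ differ by a summand of bounded dimension; using that $G$ preserves indecomposables and that an indecomposable $\Lambda$-module has no small direct summands once its endolength is large, one concludes $G_0(N)$ has a unique indecomposable non-small summand $\overline{G_0(N)}$, and the assignment $N\mapsto\overline{G_0(N)}$ inherits the three required properties together with endolength control. Finally one restricts $\Gamma$ to the (still bounded PID, still non-$e$-discrete) subalgebra, or reindexes by the cofinal family, so that on the restricted module category $G_0$ itself — with $Z_0$ free of finite rank on the right — is the desired representation embedding.

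\medskip

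The main obstacle I anticipate is the bookkeeping around "small summands": one must quantify precisely how large $\Endol{N}$ must be so that $G(N)$, known indecomposable, forces $G_0(N)$ to have exactly one genuine summand and so that the correspondence remains injective on isoclasses. This requires a uniform bound, coming from the finite generation of $Z$ on the right and the boundedness of $\Gamma$ (so that $\dim_k N$ is controlled by $\Endol{N}$ up to a constant, by boundedness), tying the discarded torsion uniformly across the whole cofinal family. Once that uniform estimate is in hand, exactness of $Z_0\otimes_\Gamma-$ is immediate from freeness, and the remaining verifications (preservation of indecomposables, reflection of isoclasses, the two endolength inequalities) are inherited from Theorem~\ref{main thm for algs with PID's} by the comparison above.
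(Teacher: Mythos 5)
You have located the right obstruction (the torsion of $Z$ as a right $\Gamma$-module) and you even mention in passing the correct remedy (a localization of $\Gamma$), but the argument you actually develop does not work and does not prove the statement. First, $Z\cong Z_0\oplus T$ is only a splitting of \emph{right} $\Gamma$-modules; $T$ is indeed a sub-bimodule, but a free complement need not be, so all you really have is a bimodule exact sequence $0\to T\to Z\to Z_0\to 0$. Hence $G_0(N)=Z_0\otimes_\Gamma N$ is a quotient of $G(N)$ by a $\Lambda$-submodule of bounded dimension, \emph{not} the complement of a direct summand of bounded dimension, and the whole ``unique big indecomposable summand'' comparison collapses at the start. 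Second, even if that comparison could be salvaged, the assignment $N\mapsto \overline{G_0(N)}$ is not a functor (choosing a direct summand is not functorial), is not exact, is only defined for $N$ of large endolength, and is certainly not of the form $Z'\otimes_\Gamma-$ with $Z'$ right-free, which is exactly what the theorem asserts. The closing sentence (``restrict $\Gamma$ to the subalgebra, or reindex by the cofinal family'') is not a construction: the theorem requires an exact tensor functor defined on all of $\Gamma'\g\mod$ for some bounded, non-$e$-discrete principal ideal domain $\Gamma'$.

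The missing idea is to change the ring rather than the bimodule. Write $Z=T\oplus P$ with $T$ torsion and $P$ right-free of finite rank; since $\Gamma$ is bounded, $\Ann_\Gamma(T)=b\Gamma=\Gamma b$ for some nonzero non-unit $b$. Localize at the denominator set $S=\{1,b,b^2,\dots\}$: then $T\otimes_\Gamma S^{-1}\Gamma=0$, so $Z\otimes_\Gamma S^{-1}\Gamma$ is free of finite rank over the bounded principal ideal domain $S^{-1}\Gamma$, and composing $G$ with the full and faithful restriction functor $S^{-1}\Gamma\g\mod\rightmap{}\Gamma\g\mod$ yields the exact embedding $\bigl(Z\otimes_\Gamma S^{-1}\Gamma\bigr)\otimes_{S^{-1}\Gamma}-$ with all the stated properties inherited from (\ref{main thm for algs with PID's}). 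What then still has to be proved, and is entirely absent from your proposal, is that $S^{-1}\Gamma$ remains non-$e$-discrete; this uses (\ref{L: la restricción de localizar se porta bien}) to identify $S^{-1}\Gamma\g\mod$ with the full subcategory of $\Gamma\g\mod$ of modules having no composition factor $\Gamma/\Gamma p$ with $p$ an atomic factor of $b$, together with the fact that only finitely many similarity classes of atoms are lost in this way.
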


As an application of the preceding theorem, we will prove the following variation of ``the second Brauer-Thrall conjecture'' involving  endolength. 

\begin{definition}\label{D: def de EBTII} We  say that a $k$-algebra $\Sigma$ \emph{satisfies EBTII} if there is an infinite sequence of natural numbers $d_1<d_2<\cdots$ such that for each one of these numbers $d_i$ there is an infinite family $\{M_j\}_j$ of non-isomorphic finite-dimensional indecomposable $\Sigma$-modules with $\Endol{M_j}=d_i$. 
\end{definition}
 
 \begin{theorem}\label{T: EBTII} 
 Any finite-dimensional algebra $\Lambda$ of infinite representation type  satisfies EBTII.  
 \end{theorem}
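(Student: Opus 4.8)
The plan is to derive Theorem~\ref{T: EBTII} directly from Theorem~\ref{main exact thm for algs with PID's} by transporting an infinite supply of indecomposables of controlled endolength from $\Gamma\g\mod$ to $\Lambda\g\mod$. First I would exploit the structure of $\Gamma$: since $\Gamma$ is a bounded principal ideal domain which is not $e$-discrete, it has infinitely many (say maximal) ideals of bounded $k$-codimension, and the indecomposable finite-dimensional $\Gamma$-modules are exactly the modules $\Gamma/\mathfrak{p}^n$ for $\mathfrak{p}$ a prime and $n\geq 1$ (this is the classical description for a PID, the "boundedness" guaranteeing that infinitely many primes have uniformly bounded residue degree). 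For a fixed prime $\mathfrak{p}$, the modules $N_n=\Gamma/\mathfrak{p}^n$, $n=1,2,3,\dots$, form an infinite family of pairwise non-isomorphic indecomposables, and $\End_\Gamma(N_n)^{op}$ is a local ring with $N_n$ of finite length over it, so $\Endol{N_n}$ is finite and in fact grows linearly in $n$; more importantly, for each fixed $n$ the endolength $\Endol{\Gamma/\mathfrak{p}^n}$ takes the \emph{same} value across all primes $\mathfrak{p}$ in an infinite family of primes sharing a common residue degree. This yields, inside $\Gamma\g\mod$, an infinite sequence of endolengths $e_1<e_2<\cdots$ (take $e_n=\Endol{\Gamma/\mathfrak{p}^n}$ for one such $\mathfrak{p}$), each realized by infinitely many non-isomorphic indecomposables.

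Next I would push this family through $G$. Because $G$ is a representation embedding, it preserves indecomposability and reflects isomorphism classes, so for each $n$ the family $\{G(\Gamma/\mathfrak{p}^n)\}_{\mathfrak{p}}$ (over the infinitely many primes of the chosen residue degree) consists of pairwise non-isomorphic indecomposable finite-dimensional $\Lambda$-modules. Because $G$ controls endolength of indecomposables, there are constants $c,c'$ with $c^{-1}\Endol{N}\leq \Endol{G(N)}\leq c'\,\Endol{N}$ for every indecomposable $N$; hence all the modules $G(\Gamma/\mathfrak{p}^n)$ in the $n$-th family have endolength lying in the bounded interval $[c^{-1}e_n,\,c'e_n]$. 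Since there are infinitely many of them (indexed by $\mathfrak{p}$) and only finitely many integers in that interval, by pigeonhole at least one integer $d_n$ in the interval is the endolength of infinitely many of these pairwise non-isomorphic indecomposable $\Lambda$-modules.

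It remains to arrange that the resulting $d_n$'s form a strictly increasing sequence, or at least contain one. Since $e_n\to\infty$ and $d_n\leq c'e_n$ but also $d_n\geq c^{-1}e_n\to\infty$, the set $\{d_n:n\geq1\}$ is an infinite set of natural numbers; passing to a subsequence we extract $d_{n_1}<d_{n_2}<\cdots$, each of which is the endolength of an infinite family of non-isomorphic finite-dimensional indecomposable $\Lambda$-modules. That is exactly the statement that $\Lambda$ satisfies EBTII in the sense of Definition~\ref{D: def de EBTII}, completing the proof.

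The main obstacle I anticipate is the book-keeping on the $\Gamma$ side: one must be sure that the "bounded PID, not $e$-discrete" hypothesis really does give infinitely many primes of a single bounded residue degree (so that a \emph{fixed} endolength value $e_n$ is attained infinitely often already over $\Gamma$), and that $\Endol{\Gamma/\mathfrak{p}^n}$ is genuinely unbounded as $n$ grows so that the $e_n$ — and hence the $d_n$ — escape to infinity; both points are elementary commutative algebra but need to be stated carefully. Everything else is a formal consequence of the properties of $G$ recorded in Theorem~\ref{main exact thm for algs with PID's} together with a pigeonhole argument.
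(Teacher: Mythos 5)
Your proposal is correct and follows essentially the same route as the paper: first establish EBTII for the bounded principal ideal domain $\Gamma$ from its non-$e$-discreteness, using the classification of finite-length $\Gamma$-modules and the linear growth of endolength along each uniserial tower, and then transfer the property through $G$ using the control of endolength together with a pigeonhole and monotonicity argument, which is exactly the content of (\ref{L: EBTII se traslada con funtores que controlan endolength}). The only caveat is that the point you flag as ``elementary commutative algebra'' is really the heart of the $\Gamma$-side argument and is not commutative: $\Gamma$ is a noncommutative bounded PID, the indecomposables are the unique length-$n$ objects $E^p_n$ of each uniserial block ${\cal U}_p$ rather than literal quotients $\Gamma/\mathfrak{p}^n$, and the formula $\Endol{E^p_n}=n\times\Endol{E^p_1}$ rests on the isomorphism $K_{E^p_n}\cong K_{E^p_1}$, which the paper proves via almost split sequences in (\ref{P: KM de inesc en U uniserial}) and (\ref{P: endol(Mn)=n endol(M1) en U unis}); the existence of infinitely many atoms whose simples share a common endolength is then deduced from non-$e$-discreteness through this formula in (\ref{C: endol de inesc vs endol de simple en U uniserial}), not from boundedness directly.
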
 
 
 This result is a consequence of the following.  From (\ref{main exact thm for algs with PID's}), we have functors $G:\Gamma\g\mod\rightmap{}\Lambda\g\mod$  which  
 preserve indecomposables, reflect isomorphism classes, and control  endolength of indecomposables, they preserve the property EBTII, by  (\ref{L: EBTII se traslada con funtores que controlan endolength}). These algebras $\Gamma$ satisfy EBTII, as we shall verify in \S\ref{S: EBTII}.   

In view of the previous theorem, it is very natural to ask whether 
EBTII holds for a wider class of representation-infinite artin algebras. The problem is open even in the case of finite-dimensional algebras over a non-perfect fields.   

The proof of (\ref{main thm for algs with minimal's}) relies on the  theory of differential tensor algebras
(ditalgebras) and reduction functors first developed by the Kiev School of representation theory, see \cite{R-K} and \cite{D} (see also \cite{CB1}). For the general
background on ditalgebras and their module categories, we refer the reader to \cite{bsz}. The proofs of the results presented here are largely based on our previous work \cite{bps}. 

As in \cite{CB2}, we  study the phenomenon of existence of an infinite family of indecomposables in $\Lambda\g\mod$ with fixed endolength by reducing with matrix problems methods to the case of hereditary algebras. In \cite{CB2}, Crawley-Boevey uses the more 
general techniques of lift pairs which apply to general artin algebras.
Here, as in \cite{bps}, we restrict ourselves to the case of finite-dimensional algebras over perfect fields and use bocses techniques which permit us to exploit the properties of reduction functors. 

Representation embeddings have been of  great interest in representation theory of algebras, see for instance \cite{Sheila}, \cite{Bo1}, \cite{CB3}, or \cite{S}. Suffice it to mention that the definition of wildness requires the existence of such an embedding from $k\langle x,y\rangle\g\mod$. This work was inspired by Bongartz article \cite{Bo1}, which, under the assumption  that the ground field is algebraically closed, shows that for any 
finite-dimensional algebra $\Lambda$ of infinite representation type, there is a representation embedding $k[x]\g\mod\rightmap{}\Lambda\g\mod$. Here we show that some version of this holds in the more general situation of perfect fields. 

\section{Strategy for the proof of theorem  (\ref{main thm for algs with minimal's})}

We recall from \cite{bsz} and \cite{bps}, some basic definitions.

\begin{definition} Let ${\cal A}$ be a layered ditalgebra, with layer $(R, W )$,  see \cite[\S4]{bsz}. Given
$M \in  {\cal A}\g \Mod$, denote by $E_M := \End_{\cal A}(M)^{op}$ its endomorphism algebra. Then, $M$ admits a
structure of $R\g E_M$-bimodule, where $m \cdot (f^0, f^1 ) = f^0(m)$, for $m \in M$ and $( f^0 , f^1 ) \in E_M$. By
definition, \emph{the endolength of $M$}, denoted by $\Endol{M}$, is the length of $M$ as a right $E_M$-module. 

As in the case of algebras, the ditalgebra ${\cal A}$ is called \emph{$e$-discrete} iff, for each endolength $d$, there are only a finite number of distinct isoclasses of finite-dimensional indecomposable ${\cal A}$-modules with endolength $d$.  
\end{definition}

\begin{definition}\label{D: admissible ditalgebra}  
Let ${\cal A} = (T, \delta)$ be a triangular ditalgebra, with layer 
$(R, W )$, over the field $k$.
Then, ${\cal A}$  is called \emph{admissible} iff $R \cong 
 D_1 \times \cdots \times D_n$, for some finite-dimensional division $k$-algebras
$D_1,\ldots,D_n$, and $W$ is finitely generated as an $R\g R$-bimodule. \emph{The coefficient algebras of ${\cal A}$} are the algebras $D_1,\dots,D_n$. 
\end{definition}

Since our ground field $k$ is perfect, any finite-dimensional $k$-algebra $\Lambda$ splits over its radical,
thus $\Lambda = S \oplus J$, where $J$ is the radical of $\Lambda$. Then, we can consider the Drozd's ditalgebra 
${\cal D}$ of $\Lambda$, see \cite[(19.1)]{bsz}, which is an admissible ditalgebra if and only if $\Lambda$ is basic. In this case, the coefficient algebras of ${\cal D}$ coincide with those of $\Lambda$. 

For the proof of our theorem (\ref{main thm for algs with minimal's}), we can assume that $\Lambda$ is basic, because Morita equivalences are representation embeddings which preserve coefficient algebras and control endolength of indecomposables.  The composition of representations embeddings is a representation embedding, see \cite{Bo1}.     

\begin{remark}\label{R: Lambda tri implies D no es e-dscreto}
If $\Lambda$ has infinite representation type, as we mentioned before, it is not $e$-discrete, so there is an fixed endolength $d$ and an  infinite family $\{M_i\}_{i\in I}$ of finite-dimensional non-isomorphic indecomposable $\Lambda$-modules with $\Endol{M_i}=d$. 

Recall that we have the cokernel functor $\Cok:{\cal P}(\Lambda)\rightmap{}\Lambda\g\mod$ and the equivalence $\Xi_\Lambda:{\cal D}\g\mod\rightmap{}{\cal P}^1(\Lambda)$, studied for instance in \cite[\S18]{bsz}, whose composition functor permits to transfer many properties from the category ${\cal D}\g\mod$ to $\Lambda\g\mod$.  Then, for instance by \cite[(18.10)(4)]{bsz}, there is a family of non-isomorphic indecomposable objects $\{X_i\}_{i\in I}$ in ${\cal P}^2(\Lambda)$ such that $\Cok(X_i)\cong M_i$, for all $i\in I$.  So, for each $i\in I$, we can  choose $N_i\in {\cal D}\g\mod$ with $\Xi_\Lambda(N_i)\cong X_i$.  Thus, $\Cok\Xi_\Lambda(N_i)\cong M_i$ and, 
from \cite[(4.4)]{bps}, there is a natural number $d_0$ and an infinite subset $I_0\subseteq I$ such that  $\{N_i\}_{i\in I_0}$ is an infinite family of finite-dimensional non-isomorphic indecomposable ${\cal D}$-modules with $\Endol{N_i}=d_0$.  So we have that ${\cal D}$ is not $e$-discrete. 
\end{remark}

The strategy of the proof of our theorem (\ref{main thm for algs with minimal's}) includes first the proof of the following statement for admissible ditalgebras. 

\begin{theorem}\label{main thm for ditalgs} 
Let ${\cal A}$ be an admissible $k$-ditalgebra which  is not $e$-discrete. Then,  there is a minimal algebra $\Q$ of infinite representation type and a functor  $F:\Q\g\mod\rightmap{}{\cal A}\g\mod$ which preserves indecomposables and reflects isomorphism classes. Moreover, the coefficient algebras of $\Q$ are some of those of ${\cal A}$. 
\end{theorem}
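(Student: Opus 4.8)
The main tool is the theory of reduction functors for ditalgebras: given an admissible ditalgebra ${\cal A}$ which is not $e$-discrete, I want to perform a sequence of admissible reductions (regularization, deletion of idempotents, unravelling, edge reduction, etc., as in \cite{bsz}) until I land on a ditalgebra whose module category is controlled by a minimal algebra $\Q$. Each such reduction gives a full and faithful functor between the respective module categories that preserves indecomposability and reflects isomorphism classes, and — crucially — does not increase endolength by more than a bounded factor, so that non-$e$-discreteness is propagated. Since ${\cal A}$ is admissible, its layer $(R,W)$ has $R$ semisimple (a product of division algebras) and $W$ finitely generated, so the standard machinery of \cite[\S\S 12--23]{bsz} applies.

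**First I would set up the induction.** I would induct on some complexity invariant of the admissible ditalgebra — for instance on $\dim_k W$, or more precisely on the pair $(\dim_k W^1, \dim_k W^0)$ ordered lexicographically, where $W = W^0\oplus W^1$ is the decomposition into the ``solid'' and ``dashed'' parts. The base of the induction is when no further nontrivial reduction is possible: at that point the ditalgebra must essentially be a minimal one in the sense of Definition \ref{D: minimal algebra}, i.e. of the form $T_{D_1\times D_2}(V)$ or $T_D(V)$ with $V$ simple, and these are exactly the ``non-reducible'' seeds. For the inductive step, I would argue that if ${\cal A}$ is not yet minimal, then one of the admissible reduction functors $F_{{\cal A}',{\cal A}}:{\cal A}'\g\mod\rightmap{}{\cal A}\g\mod$ applies, with ${\cal A}'$ admissible and of strictly smaller complexity; by \cite[(4.4)]{bps} (the same endolength-control result invoked in Remark \ref{R: Lambda tri implies D no es e-dscreto}) the non-$e$-discreteness of ${\cal A}$ forces the non-$e$-discreteness of ${\cal A}'$, so the induction hypothesis produces $F':\Q\g\mod\rightmap{}{\cal A}'\g\mod$, and the composite $F_{{\cal A}',{\cal A}}\circ F'$ is the desired functor. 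The statement that the coefficient algebras of $\Q$ are among those of ${\cal A}$ is preserved because each reduction step only passes to subalgebras/quotients built from the existing $D_i$ (idempotent deletion removes some $D_i$, the other reductions keep the division-algebra factors intact or replace them by matrix blocks over the same $D_i$, whose endomorphism rings of simples are still among the $D_i$).

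**The key technical point — and the main obstacle — is ensuring that the reduction process actually terminates at a minimal algebra, rather than stalling at some intermediate non-reducible-but-not-minimal ditalgebra, while simultaneously keeping non-$e$-discreteness alive at every stage.** Two things must be reconciled: the combinatorial fact (from the Kiev/Drozd classification theory, cf. \cite{D}, \cite{CB1}) that a ditalgebra to which no admissible reduction applies and which still has infinite representation type must be minimal; and the homological fact that non-$e$-discreteness is not lost under reduction. The second is where \cite[(4.4)]{bps} does the work: a reduction functor of the form $Z\otimes-$ with $Z$ finitely generated on the right changes endolength only by a bounded multiplicative constant on indecomposables, so an infinite family of fixed endolength upstairs pulls back (after passing to an infinite subfamily of fixed endolength, exactly as in Remark \ref{R: Lambda tri implies D no es e-dscreto}) to an infinite family of fixed endolength downstairs. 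Threading these two facts through the induction — and checking that at each stage the ditalgebra one reduces to is again \emph{admissible} (so that the induction hypothesis is applicable) — is the delicate part; the individual reduction constructions and their functorial properties I would simply cite from \cite{bsz} and \cite{bps}.
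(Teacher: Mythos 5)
Your overall shape (iterate reduction functors, propagate non-$e$-discreteness, compose with the induction hypothesis) matches the paper, but your proposed induction variable is the wrong one and the argument would stall exactly at the point you yourself flag as ``the main obstacle.'' Inducting on $\dim_k W$ (or on $(\dim_k W^1,\dim_k W^0)$ lexicographically) fails because the reduction by a $B$-module $X$ --- the unravelling/edge-reduction step, which is unavoidable when the initial subalgebra $T_{D_1\times D_2}(W_0^1)$ is of finite representation type --- generally \emph{increases} the dimension of the layer bimodule of the new ditalgebra ${\cal A}^X$. There is no monotone complexity invariant of the ditalgebra itself that decreases along the needed sequence of reductions. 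What the paper inducts on instead is an invariant of the \emph{modules}: the $e$-norm $\vert\vert M\vert\vert_e$ of Definition (\ref{D: e-norm}), a weighted version of $\dim_k\Hom_R(W_0\otimes_R M,M)$ normalized by the division algebra $D_M\subseteq\End_{\cal A}(M)^{op}$. Since ${\cal A}$ is not $e$-discrete, there is an infinite family of non-isomorphic indecomposables of common $e$-norm $q$ (bounded via the endolength, Remark (\ref{R: not e-discrete -> tiene infinite q-family})); Lemma (\ref{L: reduction functors and enorm}) shows that regularization and reduction by $X$ \emph{strictly} decrease the $e$-norm of sincere indecomposables, so after first deleting idempotents to make the family sincere, each nontrivial step hands you a $q'$-infinite family with $q'<q$, and the induction on $q$ terminates. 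Your appeal to a ``combinatorial fact'' that a non-reducible ditalgebra of infinite type must be minimal is not how termination is achieved and is not well-posed as stated.

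A second, smaller gap: the endpoint of the reduction is a \emph{quasi-minimal ditalgebra} ${\cal Q}$, not the minimal algebra $\Q$ itself; one still has to pass from $\Q\g\mod$ to ${\cal Q}\g\mod$ via the composite $L_{\cal Q}E$ of the extension functor along $T_R(W)\twoheadrightarrow T_R(W_0^1)$ with the canonical embedding, and to verify (as in Remark (\ref{R: sobre quasi-dits}), using $\delta(W_0^1)=0$, triangularity, and perfectness of $k$ to split $\End_{{\cal Q}}(L_{\cal Q}E(N))^{op}$ over its radical) that this functor preserves indecomposables and reflects isomorphism classes. Your base case silently identifies the terminal ditalgebra with a minimal algebra and skips this. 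Finally, for the coefficient-algebra claim in the edge-reduction case one needs the specific fact that for a hereditary algebra $T_{D_1\times D_2}(W_0^1)$ of finite representation type the division algebras $\End(X_i)^{op}/\rad\End(X_i)^{op}$ of the indecomposables are already among $D_1,D_2$; your ``matrix blocks over the same $D_i$'' gloss points in the right direction but is not a proof.
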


This theorem will be proved by a reduction procedure.  Then, we apply it to the Drozd's ditalgebra ${\cal D}$ of $\Lambda$ and consider the composition of the  functor $F$ provided by (\ref{main thm for ditalgs}) with    $\Cok\Xi_\Lambda:{\cal D}\g\mod\rightmap{}\Lambda\g\mod$ to obtain the functor we are looking for.  

The proof of the last theorem will be done by induction using the notion of $e$-norm of indecomposable finite-dimensional modules  discussed in the following.

\section{Norms  and reduction functors}

Suppose that  $(R, W)$ is the triangular layer of an admissible ditalgebra ${\cal A}$.  Assume that $1=\sum_{i=1}^ne_i$ is the  decomposition of the unit of $R$ as a sum of central primitive orthogonal idempotents. Thus, with the notation of (\ref{D: admissible ditalgebra}), we have the finite-dimensional division $k$-algebra $D_i=e_iD_ie_i$, for $i\in [1,n]$. 

In the discussion of \cite{bps} the following notion of norm is relevant. 

\begin{definition}\label{D: norm} 
Assume that ${\cal A}$ is an admissible $k$-ditalgebra with layer $(R, W )$. Then,
for $M \in  {\cal A}\g\mod$, its \emph{norm} is the number
$$\vert\vert M\vert\vert = \dim_k \Hom_R (W_0 \otimes_R M, M).$$

 Then, the 
 \emph{length of $M$} is $ \ell_R (M) =
\sum_{i=1}^n \ell_{D_i} (e_i M)$. The \emph{support of $M$} is the set of
idempotents $e_i$ with $e_i M \not= 0$. The ${\cal A}$-module
$M$ 
is called \emph{sincere} if $e_i M\not=  0$, for all $i \in [1, n]$.

For $M \in {\cal A}\g\mod$, we have $$\vert\vert M\vert\vert = \sum_{i, j}\dim_k (e_i W_0 e_j ) \ell_{D_i} (e_i M)\ell_{D_j} (e_j M) .$$
\end{definition}

In this note, when dealing with finite-dimensional indecomposable ${\cal A}$-modules with finite endolength, we will use the following modified norm.  

\begin{definition}\label{D: e-norm}
Let $M$ be a $k$-finite-dimensional indecomposable ${\cal A}$-module and denote by $E_M= \End_{\cal A}(M)^{op}$. Then,  the right $E_M$-module $M$
has the direct sum decomposition $M = \bigoplus_{i=1}^n e_i M$, of right $E_M$-modules.  We get  
$$
\Endol{M}=\sum_{i=1}^n\ell_{E_M}(e_iM). 
$$
From \cite[(5.6)]{bsz}, ${\cal A}$ is a Roiter ditalgebra and, from \cite[(5.12)]{bsz}, we know that $E_M$ is a local finite-dimensional $k$-algebra. 
Since the field $k$ is perfect, there is a division $k$-subalgebra $D_M$ of $E_M$ and a $D_M\g D_M$-bimodule decomposition 
$E_M = D_M \bigoplus \rad E_M$.

So every finite-length right $E_M$-module $N$  is a finite-dimensional right $D_M$-vector space and we have $\ell_{E_M}(N)=\ell_{D_M}(N)$. So,  
$\Endol{M} = \ell_{D_M}(M)$  and $\ell_{E_M}(e_iM)=\ell_{D_M}(e_iM)$,  for  $i\in [1,n]$. Hence, 
$$
\Endol{M} = \sum_{i=1}^n
\ell_{D_M}(e_i M).$$

Let $c=c_{\cal A}$ be the least common multiple of $d_1,d_2,\ldots ,d_n$, where  $d_i = \dim_k D_i$. So, for each $i$, there is a natural number $c_i$ with  $c = c_i d_i$. Then, define the \emph{$e$-norm} of $M$  by 
$$\vert\vert M\vert\vert_e:=\sum_{i,j}c_ic_j\dim_k(e_i W_0 e_j )
\ell_{D_M}(e_iM) \ell_{D_M}(e_jM).$$
\end{definition}

\begin{remark}\label{R: not e-discrete -> tiene infinite q-family}
From this definition, we get that $\vert\vert M\vert\vert_e\leq c^2\dim_k(W_0)\Endol{M}^2$. So if there is an infinite family of non-isomorphic finite-dimensional indecomposable ${\cal A}$-modules with endolength bounded by $q$, then their $e$-norms  are bounded by $c^2\dim_k(W_0)q^2$.

If ${\cal A}$ is any admissible ditalgebra and $q\in \hueca{N}$, we say that ${\cal A}$ is of \emph{infinite $q$-type} iff ${\cal A}$ admits an infinite family of non-isomorphic finite-dimensional indecomposable  ${\cal A}$-modules $M$ with $e$-norm $\vert\vert M\vert\vert_e=q$.  

So, if ${\cal A}$ is not $e$-discrete, it is of infinite $q$-type for some $q\in \hueca{N}$. 
\end{remark}

\begin{lemma} Keep the notations of (\ref{D: norm}) and (\ref{D: e-norm}), for a  given finite-dimensional  indecomposable ${\cal A}$-module $M$. Moreover, make $d_M=\dim_kD_M$, then we have:
$$\vert\vert M\vert\vert_e=\frac{c_{\cal A}^2}{d_M^2}\vert\vert M\vert\vert.$$
\end{lemma}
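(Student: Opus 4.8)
The plan is to compare the two norm formulas term by term, using the relationship between the lengths $\ell_{D_i}(e_iM)$ appearing in $\vert\vert M\vert\vert$ and the lengths $\ell_{D_M}(e_iM)$ appearing in $\vert\vert M\vert\vert_e$. The key observation is a dimension-counting identity: for each index $i$, the $k$-vector space $e_iM$ can be measured either through the division algebra $D_i$ acting on the left (via the $R$-module structure) or through the division algebra $D_M$ acting on the right (via the endomorphism algebra structure). Both give the same $k$-dimension, so $\dim_k(e_iM) = d_i\,\ell_{D_i}(e_iM) = d_M\,\ell_{D_M}(e_iM)$, where $d_i = \dim_k D_i$ and $d_M = \dim_k D_M$. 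Hence
$$\ell_{D_i}(e_iM) = \frac{d_M}{d_i}\,\ell_{D_M}(e_iM)$$
for every $i\in[1,n]$.

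Next I would substitute this into the explicit formula for $\vert\vert M\vert\vert$ given at the end of Definition (\ref{D: norm}):
$$\vert\vert M\vert\vert = \sum_{i,j}\dim_k(e_iW_0e_j)\,\ell_{D_i}(e_iM)\,\ell_{D_j}(e_jM) = \sum_{i,j}\dim_k(e_iW_0e_j)\,\frac{d_M^2}{d_id_j}\,\ell_{D_M}(e_iM)\,\ell_{D_M}(e_jM).$$
Now recall from Definition (\ref{D: e-norm}) that $c_{\cal A} = c_id_i = c_jd_j$, so $\tfrac{1}{d_id_j} = \tfrac{c_ic_j}{c_{\cal A}^2}$. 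Substituting this gives
$$\vert\vert M\vert\vert = \frac{d_M^2}{c_{\cal A}^2}\sum_{i,j}c_ic_j\dim_k(e_iW_0e_j)\,\ell_{D_M}(e_iM)\,\ell_{D_M}(e_jM) = \frac{d_M^2}{c_{\cal A}^2}\,\vert\vert M\vert\vert_e,$$
which is exactly the claimed identity after rearranging.

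The only point that requires genuine care — and which I expect to be the main (minor) obstacle — is justifying the dimension identity $d_i\,\ell_{D_i}(e_iM) = d_M\,\ell_{D_M}(e_iM)$ cleanly. This rests on the fact, recalled in Definition (\ref{D: e-norm}), that since $k$ is perfect $E_M$ splits as $D_M\oplus\rad E_M$ and every finite-length right $E_M$-module is a $D_M$-vector space with $\ell_{E_M} = \ell_{D_M}$; combined with the bimodule structure of $M$ over $R\otimes E_M$, each summand $e_iM$ is simultaneously a left $D_i$-vector space and a right $D_M$-vector space, and both structures are $k$-linear and compatible with the same underlying $k$-space, so the two counts of $\dim_k(e_iM)$ agree. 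Once that is in place, the rest is the bookkeeping above and nothing more.
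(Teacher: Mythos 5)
Your proposal is correct and follows essentially the same route as the paper: the paper's proof is precisely the observation that $c\,\ell_{D_i}(e_iM)=c_id_i\,\ell_{D_i}(e_iM)=c_i\dim_k e_iM=c_id_M\,\ell_{D_M}(e_iM)$, i.e.\ the same dimension-counting identity you isolate, followed by the same substitution into the formula for $\vert\vert M\vert\vert$. Nothing further is needed.
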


\begin{proof}
Recall that  $\vert\vert M\vert\vert=
\sum_{i,j}
\dim_k (e_i W_0 e_j )\ell_{D_i}(e_iM) \ell_{D_j} (e_jM)$ and notice that, for 
 each  $i\in [1,n]$, we have 
$$c\ell_{D_i}(e_iM)=c_id_i\ell_{D_i}(e_iM)=c_i\dim_ke_iM=c_id_M\ell_{D_M}(e_iM).$$
\end{proof}

In the next statement we describe the behavior of the reduction functors towards the $e$-norm of finite-dimensional indecomposables. 
We say that an algebra $B$ is \emph{an initial subalgebra of an admissible ditalgebra} ${\cal A}$, with layer $(R,W)$,  if $B=T_R(W^1_0)$, where $W_0^1$ is the smaller term in the triangular filtration of $W_0$, see the reminder \cite[(2.3)]{bps}.

\begin{lemma}\label{L: reduction functors and enorm}
 Assume that ${\cal A}$ is an admissible $k$-ditalgebra with layer $(R,W)$. Suppose that $z\in \{d,r,X\}$ and ${\cal A}^z$ is the admissible ditalgebra obtained from ${\cal A}$ by one of the basic reductions:  deletion of an idempotent (case $z=d$, as in \cite[(8.17)]{bsz}), regularization (case $z=r$, as in \cite[(8.19)]{bsz}), or reduction by a $B$-module $X$, where $B$ is an initial subalgebra of ${\cal A}$ and $X$ is a finite direct sum of pairwise non-isomorphic finite-dimensional indecomposable $B$-modules $X_1,\ldots,X_t$ with $\End_B(X_i)^{op}/\rad\End_B(X_i)^{op}$ isomorphic to some of the coefficient algebras of ${\cal A}$ (case $z=X$, a particular case of \cite[(7.3)]{bps}). Consider also the associated full and faithful functor 
 $F^z:{\cal A}^z\g\mod\rightmap{}{\cal A}\g\mod$. Then,  the following holds for any finite-dimensional indecomposable $N\in {\cal A}^z\g\mod$.
 \begin{enumerate}
  \item $\vert\vert N\vert\vert_e\leq \vert\vert  F^d(N)\vert\vert_e$.
  \item $\vert\vert N\vert\vert_e < \vert\vert F^r(N)\vert\vert_e$, whenever $F^r(N)$ is sincere.
\item $\vert\vert N\vert\vert_e < \vert\vert F^X(N)\vert\vert_e$, whenever $F^X(N)$ is sincere and $W'_0\not=0$.
  \end{enumerate}
\end{lemma}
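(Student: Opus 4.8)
The plan is to compare the $e$-norm of $N$ with that of $F^z(N)$ by tracking, in each of the three reduction types, how the layer data $(R,W)$ and the lengths $\ell_{D_M}(e_iM)$ transform. For each case I would first recall the explicit description of ${\cal A}^z$, its layer $(R',W')$, and the action of $F^z$ on objects from \cite[(8.17),(8.19)]{bsz} and \cite[(7.3)]{bps}; the key point is that in all three cases $F^z$ is full and faithful, so for any finite-dimensional indecomposable $N$ over ${\cal A}^z$ the endomorphism algebras satisfy $E_N\cong E_{F^z(N)}$, whence one may take $D_N=D_{F^z(N)}=:D$ and $d_N=d_{F^z(N)}$. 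Therefore the inequality to prove reduces, after using the previous lemma $\|M\|_e=(c_{\cal A}^2/d_M^2)\|M\|$ together with the fact that $c_{{\cal A}^z}=c_{\cal A}$ (the coefficient algebras are preserved, so the list of their $k$-dimensions, hence their lcm, is unchanged), to a comparison of the ordinary norms $\|N\|$ and $\|F^z(N)\|$, i.e.\ of $\dim_k\Hom_{R'}(W'_0\otimes_{R'}N,N)$ versus $\dim_k\Hom_R(W_0\otimes_R F^z(N),F^z(N))$, expressed through the bilinear formula $\|M\|=\sum_{i,j}\dim_k(e_iW_0e_j)\ell_{D_i}(e_iM)\ell_{D_j}(e_jM)$.

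For the deletion case $z=d$: here $R'$ is obtained from $R$ by deleting one idempotent $e_{i_0}$, $W'_0=e'W_0e'$ where $e'=1-e_{i_0}$, and $F^d(N)$ is $N$ regarded as an ${\cal A}$-module with $e_{i_0}F^d(N)=0$. Then every term in the $e$-norm sum for $F^d(N)$ indexed by a pair involving $i_0$ vanishes, and the remaining terms coincide termwise with those of $\|N\|_e$; hence $\|N\|_e=\|F^d(N)\|_e$, which gives (1) (indeed with equality). For the regularization case $z=r$: regularization removes a direct summand of $W^1_0$ that $\delta$ maps isomorphically onto part of $W_1$, leaving $R'=R$ and $W'_0\subseteq W_0$ a proper $R$-subbimodule, while $F^r(N)=N$ as an $R$-module. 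So $\ell_{D_i}(e_iF^r(N))=\ell_{D_i}(e_iN)$ for all $i$ and $\dim_k(e_iW'_0e_j)\le\dim_k(e_iW_0e_j)$ with strict inequality for at least one pair $(i_1,j_1)$; if $F^r(N)$ is sincere, then $\ell_{D_{i_1}}(e_{i_1}N)\ell_{D_{j_1}}(e_{j_1}N)>0$, so that term strictly increases and all others do not decrease, yielding (2). Case $z=X$ is the genuinely substantial one and is handled analogously in spirit: $R'$ is the endomorphism ring data of $X=\bigoplus X_i$, $W'_0$ is built from $W'=W'_0\oplus W'_1$ as in \cite[(7.3)]{bps}, and $F^X(N)$ has $e_iF^X(N)$ determined by the multiplicities of the $X_i$ together with the old $W_0$-structure; one checks that $\|F^X(N)\|_e$ splits into a contribution matching $\|N\|_e$ plus extra nonnegative terms coming from $W'_0\ne 0$ and from the part of $W_0$ surviving the reduction, and sincerity of $F^X(N)$ forces the $W'_0\ne 0$ contribution to be strictly positive, giving (3).

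The main obstacle I anticipate is case (3): making the bookkeeping in the reduction by a $B$-module precise. Unlike deletion and regularization, this reduction both changes $R$ (to a product of division rings coming from the tops of the $\End_B(X_i)^{op}$) and replaces $W_0$ by a new bimodule $W'_0$ whose relation to the old $W_0$ is mediated by the modules $X_i$ and by $\delta$; one must identify exactly which summands of $W'_0$ are ``new'' and verify that a sincere $F^X(N)$ genuinely meets the part of $W'_0$ witnessing $W'_0\ne 0$, so that the corresponding quadratic term in $\|N\|_e$ is nonzero and strictly raises the norm. Carefully invoking the explicit formulas of \cite[(7.3)]{bps} for $W'$ and for the functor $F^X$ on the $R'$-module structure, together with the hypothesis that $W'_0\ne 0$, should close this gap; the remaining inequalities are then the same termwise comparison of the bilinear norm formula as in the other two cases.
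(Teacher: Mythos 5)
Your overall strategy is the same as the paper's: reduce the comparison of $e$-norms to a comparison of ordinary norms via $\Vert M\Vert_e=(c_{\cal A}^2/d_M^2)\Vert M\Vert$, using that $F^z$ is full and faithful so that $d_N=d_{F^z(N)}$. But two points need correction. First, your claim that $c_{{\cal A}^z}=c_{\cal A}$ because ``the coefficient algebras are preserved'' is false for deletion and for the $X$-reduction: deletion removes coefficient algebras outright, and the $X$-reduction replaces $D_1,D_2$ by the division algebras $D_{X_i}$ (which are only \emph{some} of the old coefficient algebras). In both cases one only has $c_{{\cal A}^z}\leq c_{\cal A}$, which is fortunately the direction needed for the inequalities; equality holds only for regularization. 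Consequently your assertion that in case (1) the two $e$-norms ``coincide termwise,'' giving $\Vert N\Vert_e=\Vert F^d(N)\Vert_e$, is also wrong in general: the weights $c_ic_j$ are computed from $c_{{\cal A}^d}$ on one side and from $c_{\cal A}$ on the other, and these may differ. What is true (and what the paper proves) is $\Vert N\Vert=\Vert F^d(N)\Vert$ for the \emph{ordinary} norms, whence $\Vert N\Vert_e\leq\Vert F^d(N)\Vert_e$ after inserting $c_{{\cal A}^d}\leq c_{\cal A}$.

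Second, case (3) is left as an acknowledged gap: you say the bookkeeping for the $X$-reduction ``should close this gap'' but do not carry it out, and this is precisely the nontrivial content of that case. The paper does not redo this bookkeeping; it invokes the already-established inequality $\Vert N\Vert<\Vert F^X(N)\Vert$ for sincere $F^X(N)$ with $W_0'\neq 0$ from \cite[(7.3)(1)]{bps}, and then concludes exactly as in the other two cases using $c_{{\cal A}^X}\leq c_{\cal A}$ and $d_N=d_{F^X(N)}$. If you intend to reprove that norm inequality from scratch rather than cite it, you must actually identify the new bimodule $W_0^X$ and verify the strict increase; as written, the argument for (3) is incomplete.
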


\begin{proof} Recall that the layer $(R,W)$ of the admissible ditalgebra ${\cal A}$ is such that  $R=D_1\times\cdots\times D_n$ as in (\ref{D: admissible ditalgebra}). 

In order to show 1, for simplicity, assume that the idempotent to be deleted is $e_1$, then the layer $(R^d,W^d)$ of  ${\cal A}^d$ is such that 
$R^d=D_2\times\cdots \times D_n$, so clearly $c_{{\cal A}^d}\leq c_{\cal A}$ and, since $F^d$ is full and faithful, we get $d_N=d_{F^d(N)}$. Moreover, since $\vert\vert N\vert\vert=\vert\vert F^d(N)\vert\vert$, we have 
$$\vert\vert N\vert\vert_e=
\frac{c^2_{{\cal A}^d}}{d^2_{N}}\vert\vert N\vert\vert\leq
\frac{c^2_{\cal A}}{d^2_{F^d(N)}}\vert\vert F^d(N)\vert\vert=
\vert\vert F^d(N)\vert\vert_e.$$
 
 For 2, we notice that the layer $(R^r,W^r)$ of ${\cal A}^r$ is such that $R^r=R$, so $c_{{\cal A}^r}= c_{\cal A}$. Again, since $F^r$ is full and faithful, we get $d_N=d_{F^r(N)}$. Moreover, since $\vert\vert N\vert\vert<\vert\vert F^r(N)\vert\vert$, whenever  $F^r(N)$ is sincere, as before, we have 
$$\vert\vert N\vert\vert_e=
\frac{c^2_{{\cal A}^r}}{d^2_N}\vert\vert N\vert\vert< 
\frac{c^2_{\cal A}}{d^2_{F^r(N)}}\vert\vert F^r(N)\vert\vert=
\vert\vert F^r(N)\vert\vert_e.$$

In the last case, the assumption implies that $c_{{\cal A}^X}\leq c_{\cal A}$, and we can proceed as before, using that for $F^X(N)$ sincere we have $\vert\vert N\vert\vert<\vert\vert F^X(N)\vert\vert$, see  \cite[(7.3)(1)]{bps}.
\end{proof}

\section{Reduction to minimal algebras}

In this section, we prove the following theorem (\ref{T: reduccion a minimales}), which  is a stronger version of  (\ref{main thm for ditalgs}). 
Its statement and proof use the following  notions and preliminary arguments.  
As in the case of algebras, we say that a functor $F:{\cal A}'\g\mod\rightmap{}{\cal A}\g\mod$, where ${\cal A}'$ and ${\cal A}$ are layered ditalgebras,  \emph{controls endolength of indecomposables}   if there are some $c,c'\in \hueca{N}$ such that  
$\Endol{N}\leq c\times\Endol{F(N)}$ and $\Endol{F(N)}\leq c'\times \Endol{N}$, for all indecomposables $N\in {\cal A}'\g\mod$.

Recall that given any ditalgebra,  ${\cal A}=(T,\delta)$, there is a canonical embedding functor $L_{\cal A}:A\g\mod\rightmap{}{\cal A}\g\mod$, where $A$ is the subalgebra of $T$ formed   by the homogeneous elements of degree zero. It maps every $A$-module to itself and each morphism $f\in\Hom_A(M,N)$ is mapped onto $(f,0)\in\Hom_{\cal A}(M,N)$, see \cite[\S2]{bsz}. Clearly, the functor $L_{\cal A}$ reflects indecomposability.

\begin{definition}\label{D: quasimnimal} 
We say that an admissible  ditalgebra ${\cal Q}=(T,\delta)$ is \emph{quasi-minimal} iff $T$ has layer $(R,W)$ such that $R=D_1$ or $R=D_1\times D_2$, there is an $R$-$R$-bimodule decomposition $W_0=W'_0\oplus W''_0$ is such that
 $W'_0=e_1W'_0e_1$ in the first case and  $W'_0=e_2W_0'e_1$ in the second one, and  moreover,    in any case, $W'_0$ is a simple $R$-$R$-bimodule with $\delta(W'_0)=0$. 
 
 Any such quasi-minimal ditalgebra ${\cal Q}$ determines a minimal algebra $\Q:=T_R(W'_0)$, which we call \emph{the minimal algebra of ${\cal Q}$}. 
\end{definition}

\begin{remark}\label{R: sobre quasi-dits}
Quasi-minimal ditalgebras ${\cal Q}=(T,\delta)$ have the following important feature. If $Q$ denotes the subalgebra of $T$ formed by the homogeneous elements of degree 0, we have the  corresponding canonical embedding functor $L_{\cal Q}:Q\g\mod\rightmap{}{\cal Q}\g\mod$. Consider  the minimal algebra $\Q$ of ${\cal Q}$,   the projection morphism of algebras $\pi:Q=T_R(W)\rightmap{}T_R(W'_0)=\Q$ ,  and the corresponding \emph{extension functor} $E:\Q\g\Mod\rightmap{}Q\g\Mod$, which is given by scalar restriction through $\pi$, thus $E\cong E(\Q)\otimes_{\Q}-$. Then, since $\delta(W'_0)=0$ and $W''_0E(N)=0$,  for any finite-dimensional indecomposable $\Q$-module $N$, we have
 $$\hueca{E}:=\End_{{\cal Q}}(L_{\cal Q}E(N))^{op} =\hueca{E}_N^0 \oplus \hueca{E}_N^1,$$
where $\hueca{E}_N^0$ and  $\hueca{E}_N^1$ are the subspaces of $\hueca{E}$ formed, respectively,  by the endomorphisms of $E(N)$ of the form $(f^0,0)$, and by the endomorphisms of $E(N)$ of the form $(0,f^1)$. Moreover, $\hueca{E}_N^0 = L_{\cal Q} E(\End_{\Q}(N)^{op})$. 

Indeed, given any $g^0\in \End_{\Q}(N)^{op}$, from the equality  $W''_0E(N)=0$, we have $g^0(wm)=0=wg^0(m)$, for all $w\in W''_0$ and $m\in E(N)$.  Since $Q$ is generated by $R$, $W'_0$, and $W''_0$, we have that $g^0\in \End_Q(E(N))^{op}$, so $L_{\cal Q}E(g^0)=(g^0,0)\in \hueca{E}_N^0$. Moreover, 
given $f=(f^0,f^1)\in \hueca{E}$, the equality $\delta(W'_0)=0$ implies that $f^0\in \End_{\Q}(N)^{op}$.   Thus, $(f^0,0)\in \hueca{E}^0_N$,  $(0,f^1)=(f^0,f^1)-(f^0,0)\in \hueca{E}$, and $\hueca{E}_N^0 = L_{\cal Q} E(\End_{\Q}(N)^{op})$.  

 Our ditalgebra ${\cal Q}$ is triangular, so, from \cite[(5.4)]{bsz}, $\hueca{E}^1_N$ is a nilpotent ideal of $\hueca{E}$, so we know that $\hueca{E}_N^1 \subseteq  \Rad \hueca{E}$.
Since $k$ is perfect, we also have 
$\End_{\Q}(N)^{op} = K_N \oplus \Rad(\End_{\Q}(N)^{op})$, 
where $K_N$ is a division algebra. From the preceding facts,  we obtain the  equality
$$\End_{{\cal Q}}(L_{\cal Q}E(N))^{op} 
=
L_{\cal Q}E(K_N)\oplus 
\Rad(\End_{{\cal Q}}(L_{\cal Q}E(N))^{op}).$$  
Thus, the functor $\LQ_\Q:=L_{\cal Q}E:\Q\g\mod\rightmap{}{\cal Q}\g\mod$ preserves indecomposables. It reflects isomorphism classes because, ${\cal Q}$ is a Roiter ditalgebra, see \cite[(5.8) and (5.6)]{bsz}. We will keep the preceding notation whenever we deal with quasi-minimal ditalgebras.  
\end{remark}

We have the following elementary statement relating dimension and endolength.

\begin{lemma}\label{L: dim vs endol}
Let $\Sigma$ be any $k$-algebra (not necessarily finite-dimensional) and $M$ an indecomposable finite-dimensional 
$\Sigma$-module. Consider the local algebra $E:=\End_{\Sigma}(M)^{op}$ and the division algebra $K_M:=E/\Rad E$. Then, we have 
$$\Endol{M} = \dim_kM/\dim_kK_M.$$
\end{lemma}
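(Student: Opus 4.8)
The plan is to reduce the computation of $\Endol{M}$ to a dimension count over the division algebra $K_M$, along exactly the lines already used for ditalgebra modules in (\ref{D: e-norm}). First I would recall that, since $M$ is a finite-dimensional (hence finite-length) indecomposable $\Sigma$-module, Fitting's lemma guarantees that $E=\End_{\Sigma}(M)^{op}$ is a local finite-dimensional $k$-algebra, so $\rad E$ is its unique maximal right ideal and $K_M=E/\rad E$ is a division $k$-algebra. Because $k$ is perfect, the Wedderburn--Malcev theorem provides a division $k$-subalgebra of $E$ isomorphic to $K_M$ together with a $K_M$-$K_M$-bimodule decomposition $E=K_M\oplus\rad E$; I will identify $K_M$ with that subalgebra, so that restriction of scalars along the inclusion $K_M\hookrightarrow E$ makes sense.

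Next I would observe that the unique simple right $E$-module is $E/\rad E\cong K_M$, which, viewed as a right $K_M$-vector space via that restriction of scalars, is $1$-dimensional. Hence, for any right $E$-module $N$ of finite length, choosing a composition series $0=N_0\subset N_1\subset\cdots\subset N_\ell=N$ with each factor $N_i/N_{i-1}\cong K_M$ and restricting scalars to $K_M$ shows that $N$ is a finite-dimensional right $K_M$-vector space with $\dim_{K_M}N=\ell=\ell_E(N)$; in particular $\ell_E(N)=\ell_{K_M}(N)$. Applying this with $N=M$ gives $\Endol{M}=\ell_E(M)=\dim_{K_M}M$.

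Finally, regarding $M$ as a right $K_M$-vector space and $K_M$ as a $k$-vector space, the tower formula $\dim_k M=(\dim_{K_M}M)(\dim_k K_M)$ yields $\dim_{K_M}M=\dim_k M/\dim_k K_M$, which is the asserted identity. I do not expect any genuine obstacle here; the only points requiring care are the correct invocation of Wedderburn--Malcev over the perfect but possibly non-algebraically-closed field $k$, and the remark that restriction of scalars along $K_M\hookrightarrow E$ converts the $E$-composition length of a finite-length module into its $K_M$-dimension.
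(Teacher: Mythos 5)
Your proof is correct and follows essentially the same route as the paper: take a composition series of $M$ as a right $E$-module, note that each factor is a one-dimensional $K_M$-vector space, and conclude $\dim_k M = \ell_E(M)\cdot\dim_k K_M$. The only difference is that you first invoke Wedderburn--Malcev to realize $K_M$ as a subalgebra of $E$ and restrict scalars; the paper skips this, since each composition factor is annihilated by $\Rad E$ and is therefore a simple $K_M=E/\Rad E$-module without any splitting, so perfectness of $k$ is not actually needed for this particular lemma.
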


\begin{proof} Since $M$ is finite-dimensional, its length as an $E$-module is finite. Consider a composition series $0=M_0 \subset M_1  \subset \cdots \subset M_l =M$ 
for the right $E$-module $M$. Then, each simple $E$-module $M_i/M_{i-1}$ is a simple $K_M$-module: a one-dimensional $K_M$-vector space. So, we have $ \dim_k(M_i/M_{i-1}) = \dim_kK_M$, for each $i$. Thus, 
$\dim_kM = l\dim_kK_M$. Our statement follows from this. 
\end{proof}

\begin{remark}\label{R: Phi preserves endolength of indecs} Similarly, given any admissible ditalgebra ${\cal A}$ and an indecomposable $M\in {\cal A}\g\mod$, if we consider the local algebra $E:=\End_{\cal A}(M)^{op}$ and the division algebra $K_M:=E/\Rad E$, we have 
$\Endol{M} = \dim_kM/\dim_kK_M$. 

Given a quasi-minimal ditalgebra ${\cal Q}$, with the notation of (\ref{R: sobre quasi-dits}), given an indecomposable finite-dimensional $\Q$-module $M$, 
since the functor $\LQ_{\Q}$ preserves dimension and $K_M\cong \LQ_{\Q}(K_M)\cong K_{\LQ_{\Q}(M)}$, we obtain from (\ref{R: sobre quasi-dits}) and (\ref{L: dim vs endol})  that $\Endol{\LQ_{\Q}(M)} = 
\dim_k\LQ_{\Q}(M)/
\dim_kK_{\LQ_{\Q}(M)}=
\dim_kM/\dim_kK_M=\Endol{M}$. Thus, $\LQ_{\Q}:\Q\g\mod\rightmap{}{\cal Q}\g\mod$ preserves endolength of indecomposable modules.
\end{remark}

\begin{definition}\label{D: control de dim} Let ${\cal C}$ and ${\cal C'}$ be full subcategories of finite-dimensional modules over some algebras or some ditalgebras. Then, we say that a functor $F:{\cal C}\rightmap{}{\cal C}'$ \emph{controls dimension} iff  there are constants $c,c'\in \hueca{N}$ such that
$$   \dim_kM\leq c\times\dim_kFM \hbox{ \ and \ } \dim_kFM\leq c'\times\dim_kM,$$
for all  $M\in {\cal C}$.
\end{definition}

\begin{remark}\label{R: controla dim -> controla endol} From (\ref{L: dim vs endol}), we know that 
any functor $F:{\cal C}\rightmap{}{\cal C}'$ which controls dimension and induces isomorphisms  $K_M\cong K_{FM}$, for all indecomposable $M$, controls endolength of indecomposables. 
\end{remark}

\begin{lemma}\label{L: F^X controls endolength}
Let ${\cal A}$ be an admissible  ditalgebra. Assume that ${\cal A}^X$ is the layered ditalgebra obtained from ${\cal A}$ by reduction, using the $B$-module $X$, where $B$ is an initial subalgebra of ${\cal A}$ and $X$ is a finite direct sum of non-isomorphic finite-dimensional indecomposable $B$-modules. Assume that $X$ is a complete admissible triangular $B$-module and let   $F^X : {\cal A}^X \g\Mod \rightmap{} {\cal A}\g\Mod$ be the associated functor, as in \cite[12.10]{bsz}. Then, $F^X$ controls dimension and  controls endolength of indecomposables. 
\end{lemma}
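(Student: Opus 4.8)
The plan is to reduce everything to two ingredients: a dimension estimate for the reduction functor $F^X$ coming from the explicit description of reduction by a module in \cite[\S12]{bsz}, and Remark \ref{R: controla dim -> controla endol}, which says that a functor controlling dimension and inducing isomorphisms $K_N \cong K_{F^X(N)}$ automatically controls endolength of indecomposables. So the work splits into (a) producing constants $c, c'$ with $\dim_k N \le c\,\dim_k F^X(N)$ and $\dim_k F^X(N) \le c'\,\dim_k N$ for all finite-dimensional $N \in {\cal A}^X\g\mod$, and (b) checking that $F^X$ induces an isomorphism between $K_N = \End_{{\cal A}^X}(N)^{op}/\Rad$ and $K_{F^X(N)} = \End_{\cal A}(F^X(N))^{op}/\Rad$ for indecomposable $N$.

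For (a): recall that reduction by the complete admissible triangular $B$-module $X = X_1 \oplus \cdots \oplus X_t$ produces ${\cal A}^X$ with a layer whose degree-zero algebra $R^X$ is $\prod_i \End_B(X_i)^{op}/\rad$, and the functor $F^X$ is, up to the canonical identifications, of the form $F^X(N) = X \otimes_{R^X} N$ as a vector space, with the ${\cal A}$-module structure built from the triangular data; see \cite[(12.10)]{bsz}. Concretely, $\dim_k F^X(N) = \sum_i \dim_k X_i \cdot \ell_{R^X_i}(\varepsilon_i N)$ where $\varepsilon_i$ are the primitive central idempotents of $R^X$, and $\dim_k N = \sum_i \dim_k (\varepsilon_i N) = \sum_i d_i\,\ell_{R^X_i}(\varepsilon_i N)$ with $d_i = \dim_k \End_B(X_i)^{op}/\rad$. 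Since all the $X_i$ are nonzero finite-dimensional and $t$ is finite, we get the two-sided bound
\[
\frac{\min_i \dim_k X_i}{\max_i d_i}\,\dim_k N \;\le\; \dim_k F^X(N) \;\le\; \frac{\max_i \dim_k X_i}{\min_i d_i}\,\dim_k N,
\]
which gives $c = \lceil \max_i d_i / \min_i \dim_k X_i\rceil$ and $c' = \lceil \max_i \dim_k X_i / \min_i d_i\rceil$. (One should double-check against the precise conventions of \cite[\S12]{bsz} whether $F^X(N)$ is literally $X\otimes_{R^X}N$ on the nose or carries an extra summand, but in any case it is a fixed finitely generated functor, so a linear two-sided dimension bound persists.)

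For (b): since $X$ is assumed complete admissible triangular, ${\cal A}^X$ is again admissible and hence Roiter by \cite[(5.6)]{bsz}, and $F^X$ is full and faithful — this is part of the content of the reduction construction \cite[(12.10)]{bsz} — so it induces an algebra isomorphism $\End_{{\cal A}^X}(N)^{op} \cong \End_{\cal A}(F^X(N))^{op}$ for every $N$, in particular for indecomposable $N$, where both sides are local finite-dimensional by \cite[(5.12)]{bsz}. Passing to the quotient by the radical gives $K_N \cong K_{F^X(N)}$. Combining (a) and (b) with Remark \ref{R: controla dim -> controla endol} yields that $F^X$ controls dimension and controls endolength of indecomposables, as claimed.

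The main obstacle I anticipate is purely bookkeeping rather than conceptual: pinning down the exact form of $\dim_k F^X(N)$ in terms of the layer data of ${\cal A}^X$ — i.e. making sure the identification $F^X(N) \cong X \otimes_{R^X} N$ (as $k$-spaces) is applied with the correct conventions from \cite[\S12]{bsz}, including whether ${\cal A}^X$'s unit decomposes with the $\varepsilon_i$ exactly matching the indecomposable summands $X_i$ — and then extracting the uniform constants. The fullness and faithfulness of $F^X$, and hence step (b), is standard and already available in \cite{bsz}.
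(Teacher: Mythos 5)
Your proposal is correct and follows essentially the same route as the paper: the paper likewise computes $\dim_k F^X(N)=\dim_k(X\otimes_S N)=\sum_i\dim_k(Xf_i\otimes_{Sf_i}f_iN)$ to obtain the two-sided dimension bounds (using $Xf_i\not=0$ for the lower bound) and then invokes (\ref{R: controla dim -> controla endol}) together with the full faithfulness of $F^X$. Your explicit constants are simply a sharper form of the paper's bounds $\dim_k N\leq \dim_k F^X(N)\leq \dim_k X\times\dim_k N$.
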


\begin{proof} The finite-dimensional algebra $\Gamma = \End_B(X)^{op}$ admits the splitting $\Gamma = S \oplus P$, where $P$ is the radical of $\Gamma$, because the ground field is perfect.  The semisimple algebra $S$ is basic because the indecomposable direct summands of $X$ are pairwise non-isomorphic. Let us denote by $f_1,\ldots,f_t$ the orthogonal primitive central idempotents given by the decomposition of the unit of $S$.

From \cite[(2.7)]{bps}, we already know that or all $N \in {\cal A}^X \g\Mod$, we have that
$\Endol{F^X(N)}\leq \dim_k X\times \Endol{N}$. 

It is clear that $\dim_kF^X(N)\leq \dim_kX\times \dim_kN$. 
Moreover, we have 
$$\dim_kF^X(N)=\dim_kX\otimes_SN=\sum_i\dim_k
Xf_i\otimes_{Sf_i}f_iN\geq \sum_i\dim_kf_iN=\dim_kN,$$
because $\dim_kXf_i\otimes_{Sf_i}f_iM=\dim_k(\dim_{Sf_i}Xf_i)f_iM\geq \dim_k f_iN,$ with $Xf_i\not=0$. So, the functor  $F^X$ controls dimension, and we can apply (\ref{R: controla dim -> controla endol}).  
\end{proof}

\begin{theorem}\label{T: reduccion a minimales} 
For any admissible $k$-ditalgebra ${\cal A}$ not $e$-discrete,   
there is a quasi-minimal ditalgebra ${\cal Q}$ with minimal algebra $\Q$ of infinite representation type,  and a faithful functor $F:\Q\g\mod\rightmap{}{\cal A}\g\mod$ which preserves indecomposability,   reflects isomorphism classes, and controls endolength of indecomposables.
 The functor $F$ is the composition $$\Q\g\mod\rightmap{\LQ_{\Q}}{\cal Q}\g\mod\rightmap{G}{\cal A}\g\mod,$$ 
where $G$ is a composition of  full and faithful reduction functors associated to a finite sequence of reductions which transform ${\cal A}$ into ${\cal Q}$.  Moreover, the coefficient algebras of $\Q$ are some of those of ${\cal A}$. 
\end{theorem}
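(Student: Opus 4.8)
The plan is to prove Theorem~\ref{T: reduccion a minimales} by induction on a well-chosen numerical invariant, combining the $e$-norm machinery of \S3 with the reduction-functor bookkeeping of Lemma~\ref{L: F^X controls endolength} and Lemma~\ref{L: reduction functors and enorm}. Since ${\cal A}$ is not $e$-discrete, by Remark~\ref{R: not e-discrete -> tiene infinite q-family} it is of infinite $q$-type for some $q\in\hueca{N}$; fix the minimal such $q$. Among all admissible ditalgebras reachable from ${\cal A}$ by a finite sequence of basic reductions (deletion of idempotents, regularization, reduction by a complete admissible triangular $B$-module over an initial subalgebra) and still of infinite $q$-type, we want to land on one that is already quasi-minimal. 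The inductive quantity will be something like the pair $(q,\,\text{number of arrows of }W'_0\text{-type not yet trivialised})$, ordered lexicographically, or more simply an induction on $q$ itself with an inner induction on $\dim_k W_0$. The key point is that each basic reduction is a full and faithful functor $F^z:{\cal A}^z\g\mod\to{\cal A}\g\mod$ that controls endolength of indecomposables (Lemma~\ref{L: F^X controls endolength} for $z=X$, and the analogous easier statements for $z=d,r$), so composing finitely many of them yields a functor $G$ with the same properties; then $F=G\circ\LQ_\Q$ works by Remark~\ref{R: sobre quasi-dits} and Remark~\ref{R: Phi preserves endolength of indecs}, which tell us $\LQ_\Q$ preserves indecomposability, reflects iso classes, and preserves endolength.

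The heart of the argument is to show that if an admissible ${\cal A}$ of infinite $q$-type is not already quasi-minimal, then some basic reduction produces an admissible ${\cal A}'$ that is still of infinite $q$-type and is ``strictly smaller''. First I would restrict attention to the infinite family $\{M_i\}$ of non-isomorphic indecomposables of $e$-norm exactly $q$; by passing to an infinite subfamily we may assume they all have the same support, so after deleting the idempotents outside that support we may assume every $M_i$ is sincere. Now look at $W_0^1$, the smallest term of the triangular filtration of $W_0$, and the initial subalgebra $B=T_R(W_0^1)$. If $\delta$ does not already vanish on a minimal piece, regularization applies and removes part of $W_0$; by Lemma~\ref{L: reduction functors and enorm}(2), since the $M_i$ are sincere, their preimages $N_i$ under $F^r$ have strictly smaller $e$-norm, but they still form an infinite family of non-isomorphic indecomposables, so ${\cal A}^r$ is of infinite $q'$-type for some $q'<q$ — and by minimality of $q$ this cannot happen unless in fact the family does not survive regularization, which forces $\delta$ to vanish on that bottom bimodule. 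Similarly, if $W_0^1$ is not a simple $R$-$R$-bimodule of the special one-sided shape required by Definition~\ref{D: quasimnimal}, one performs a reduction by a suitable $B$-module $X$ (a complete admissible triangular $B$-module, built from the finitely many indecomposable $B$-modules occurring as ``restrictions'' of the $M_i$, with $\End_B(X_i)^{op}/\rad$ among the coefficient algebras of ${\cal A}$ — this is exactly the hypothesis bookkeeping in Lemma~\ref{L: reduction functors and enorm}(3) and \cite[(7.3)]{bps}); again sincerity plus $W_0'\neq 0$ forces a strict drop in $e$-norm, contradicting minimality of $q$ unless the reduction already leaves us in the quasi-minimal shape. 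Iterating, after finitely many steps (finiteness because $\dim_k W_0$ strictly decreases and $q$ cannot drop below the fixed minimal value) we reach a quasi-minimal ${\cal Q}$ still of infinite $q$-type.

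It then remains to check that the minimal algebra $\Q=T_R(W_0')$ of this ${\cal Q}$ has infinite representation type and that its coefficient algebras are among those of ${\cal A}$. The coefficient-algebra claim is immediate: each basic reduction either removes a factor $D_i$ of $R$ (deletion) or leaves $R$ untouched (regularization) or, in the $X$-case, introduces new factors that by hypothesis are already isomorphic to coefficient algebras of ${\cal A}$; and the coefficient algebras of $\Q$ are the $D_i$ (or $D$) appearing in the layer $R$ of ${\cal Q}$. For infinite representation type of $\Q$: since ${\cal Q}$ is of infinite $q$-type it has infinitely many non-isomorphic finite-dimensional indecomposables; one transports these, via the fact that $\LQ_\Q$ reflects isomorphism classes together with a counting/boundedness argument on $e$-norms (as in Remark~\ref{R: not e-discrete -> tiene infinite q-family}), to conclude $\Q$ itself is not $e$-discrete, hence of infinite representation type over the perfect field $k$ by the characterisation cited in the introduction (\cite{ficlifts}). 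The main obstacle I expect is the precise ``strict descent'' step: verifying that one of the three basic reductions always applies nontrivially to a non-quasi-minimal admissible ditalgebra while preserving an infinite family of fixed $e$-norm — this requires carefully choosing the $B$-module $X$ so that it is complete and admissible triangular and captures all the relevant indecomposable restrictions of the $M_i$, and invoking the sincerity reduction to make Lemma~\ref{L: reduction functors and enorm} bite; this is where the bulk of the work in \cite{bps} gets used.
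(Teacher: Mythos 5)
Your overall scaffolding (delete idempotents to get a sincere $q$-infinite family, look at the bottom term $W_0^1$ of the triangular filtration, and apply regularization, deletion, or an $X$-reduction according to the shape of $W_0^1$ and of $\delta(W_0^1)$) is the same as the paper's. But the logical engine you use to reach a quasi-minimal ditalgebra is broken. You fix the minimal $q$ for which ${\cal A}$ is of infinite $q$-type and then argue that a strict drop of $e$-norm under $F^r$ or $F^X$ ``cannot happen by minimality of $q$'', concluding that $\delta$ must already vanish on the bottom bimodule, i.e.\ that ${\cal A}$ is already essentially quasi-minimal. This is a non sequitur: minimality of $q$ is a property of ${\cal A}$, whereas the drop produces a $q'$-infinite family for the \emph{different} ditalgebra ${\cal A}^r$ (or ${\cal A}^X$); there is no contradiction, and the drop is exactly what is supposed to happen. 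The paper instead runs a genuine strong induction on $q$: when $\delta(W_0^1)\neq 0$, and in the representation-finite branch of the ``edge'' case, one lands on a ditalgebra of infinite $q'$-type with $q'<q$, applies the induction hypothesis to \emph{that} ditalgebra, and composes the resulting functor with $F^r$ or $F^X$. As written, your argument never actually invokes an induction hypothesis and would ``prove'' that no reduction is ever necessary. Relatedly, your termination claim via ``$\dim_kW_0$ strictly decreases'' fails for $X$-reductions (the new bimodule is generally larger); the correct well-founded quantity is $q$ itself, via Lemma (\ref{L: reduction functors and enorm}).

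Two further gaps. First, when $W_0^1=e_2W_0^1e_1$ the decisive dichotomy is whether $B_0=T_{D_1\times D_2}(W_0^1)$ has finite or infinite representation type: if infinite, one stops (after deletions ${\cal Q}$ is quasi-minimal with $\Q=B_0$ of infinite type); if finite, one reduces by $X=X_1\oplus\cdots\oplus X_l\oplus eR$ where the $X_i$ are \emph{all} the indecomposable $B_0$-modules, and one needs the fact that for a representation-finite hereditary $B_0$ the division algebras $\End_{B_0}(X_i)^{op}/\rad\End_{B_0}(X_i)^{op}$ are among $D_1,D_2$, so that ${\cal A}^X$ stays admissible with coefficient algebras among those of ${\cal A}$. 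Your $X$ (``the indecomposable $B$-modules occurring as restrictions of the $M_i$'') is not obviously complete or admissible, and this dichotomy --- which is also what guarantees that the minimal algebra you finally output has infinite representation type --- is absent. Second, your justification that $\Q$ has infinite representation type by ``transporting'' indecomposables from ${\cal Q}\g\mod$ to $\Q\g\mod$ goes against the direction of $\LQ_{\Q}$; in the paper this is read off structurally: $\Q=T_{D_1}(W_0^1)$ is always of infinite type, $\Q=T_{D_1\times D_2}(W_0^1)$ occurs only in the branch where that algebra was assumed of infinite type, and the base case $q=1$ (where $\Q\cong k[x]$) is handled separately.
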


\begin{proof} Assume that $R =D_1\times\cdots\times D_n$ and $1=\sum_{i=1}^ne_i$ is a decomposition of the unit of $R$ as a sum of central ortogonal idempotents, so each $e_i Re_i = D_i$ is a  
finite-dimensional $k$-algebra, say of dimension $d_i= \dim_kD_i$.
We will use the $e$-norm $\vert\vert M\vert\vert_e$ defined for indecomposable modules in  ${\cal A}\g\mod$. 

By definition, given $q\in \hueca{N}$, \emph{a $q$-infinite family of ${\cal A}\g\mod$} is an infinite family ${\cal F}$ of non-isomorphic finite-dimensional indecomposable ${\cal A}$-modules $M$ with common  $e$-norm $\vert\vert M\vert\vert_e=q$.  
Since ${\cal A}$ is not $e$-discrete, 
${\cal A}\g\mod$ admits a $q$-infinite family ${\cal F}$.

If there are infinitely many ${\cal A}$-modules $M$ in the family ${\cal F}$, with $e_iM=0$, for a fixed $i\in [1,n]$, we consider the ditalgebra ${\cal A}^{d_1}$ obtained from ${\cal A}$ by deletion of the idempotent $e_i$ and the associated full and faithful functor $F^{d_1}:{\cal A}^{d_1}\g\mod\rightmap{}{\cal A}\g\mod$. Then, ${\cal A}^{d_1}$ is an admissible ditalgebra and  we know that  $1\leq  \vert\vert N\vert\vert_e\leq \vert\vert F^{d_1}(N)\vert\vert_e=q$, whenever $M\in {\cal F}$  and $F^{d_1}(N)\cong M$.  So there is a $q_1$-infinite family ${\cal F}^{d_1}$ in ${\cal A}^{d_1}\g\mod$ with $q_1\leq q$. In a finite number of such deletion of idempotents, we obtain an admissible ditalgebra ${\cal A}'={\cal A}^{d_1\cdots d_m}$  and a $q'$-infinite family ${\cal F}'$ of sincere modules in 
${\cal A}'\g\mod$ with $q'\leq q$.

If there is a quasi-minimal ditalgebra ${\cal Q}$ and a functor $F:\Q\g\mod\rightmap{}{\cal A}'\g\mod$ as in the statement of the theorem for the ditalgebra ${\cal A}'$, then the composition $F^{d_1}\cdots F^{d_m}F:\Q\g\mod\rightmap{}{\cal A}\g\mod$ is the required functor for ${\cal A}$, because each functor $F^{d_i}$ 
 preserves endolength, see \cite[(2.5)]{bps}. 

 So we can assume that we already started with a $q$-infinite family of sincere  
${\cal A}$-modules  ${\cal F}$ and proceed to the proof of the theorem by induction on $q$.

In case $q=1$, for each $M\in {\cal F}$, with the notation of (\ref{D: e-norm}), we have 
$$
1 = \vert\vert M\vert\vert_e =\sum_{i,j}
c_ic_j \dim_k (e_i W_0 e_j) \ell_{D_M}(e_i M )\ell_{D_M}(e_jM).$$
This implies that $ n = 1$ or   $n = 2$. If $n=1$, 
$R= D_1$ and $ \dim_k (e_1 W_0 e_1 ) = 1$. Moreover, 
 $D_1 = k$  and $A= T_k (W_0 )$ with  $\dim_k W_0 = 1$. Here $\delta(W_0)\subseteq W_1$, because the triangular filtration of $W_0$ is just $0\subseteq W_0$.  
If $\delta(W_0)\not=0$, since $W^1_0$ is a simple $R\g R$-bimodule and $W_0^1\cap \Ker \delta=0$, we can consider the admissible ditalgebra ${\cal A}^r$ obtained from ${\cal A}$ by regularization and the associated equivalence functor $ F^r : {\cal A}^r \g\mod \rightmap{} {\cal A}\g\mod$, see \cite[(8.19)]{bsz}. But, here, ${\cal A}^r$ has only one indecomposable module, so ${\cal A}$ would not be of infinite representation type.  So, $\delta(W_0)=0$, ${\cal A}$ is quasi-minimal,  
 and we can take ${\cal Q}={\cal A}$, $\Q=A\cong k[x]$, and $G=Id$. The functor $F=\LQ_{\Q}$ preserves indecomposables and reflects isomorphisms by (\ref{R: sobre quasi-dits}). It preserves endolength of indecomposables by (\ref{R: Phi preserves endolength of indecs}).

In case $n=2$, we have $ R = D_1 \times D_2$ and, since, $\dim_k W_0 = 1$, we get  $D_1 = D_2 = k$. In this case, since $L_{\cal A}:A\g\mod\rightmap{}{\cal A}\g\mod$ reflects indecomposability and isomorphism classes, and $A=T_{k\times k}(W_0)$ is of finite representation type, 
then ${\cal A}$ is also of finite representation type, which is not the case. Hence, this case does not occur. 
So the base of the induction is established.

Now, assume $q\geq 2$ and that the statement of the theorem holds for every  admissible ditalgebra ${\cal A}'$ with a  $q'$-infinite family ${\cal F}'$ in ${\cal A}'\g\mod$ with  $q'<q$. 
We will proceed as in the proof of \cite[(7.5)]{bps}. 

Since $k$ is perfect, the algebra $R\otimes_kR^{op}$ is semisimple and we can look at the additive triangular filtration $0=W^0_0\subseteq W_0^1\subseteq\cdots \subseteq W_0^s=W_0$, see \cite[(5.1)]{bsz}.  We can assume that $W_0^1$ is a simple direct summand of the $R\g R$-bimodule $W_0$. The triangularity conditions imply that $\delta(W_0^1)\subseteq W_1$. Consider $R\g R$-bimodule decompositions $W_0=W_0^1\oplus W''_0$ and $W_1=\delta(W^1_0)\oplus W''_1$. We have the following two possibilities.

\medskip
\noindent\emph{Case 1: $\delta(W_0^1 )\not= 0$.}
\medskip

Since $W^1_0$ is a simple $R\g R$-bimodule and $W_0^1\cap \Ker \delta=0$, we can consider the admissible ditalgebra ${\cal A}^r$ obtained from ${\cal A}$ by regularization and the associated equivalence functor $ F^r : {\cal A}^r \g\mod \rightmap{} {\cal A}\g\mod$, see \cite[(8.19)]{bsz}. For any  indecomposable $N\in {\cal A}^r\g\mod$ with $F^r(N)\cong M\in {\cal F}$, we have 
$\vert\vert N\vert\vert_e<\vert\vert M\vert\vert_e=q$, by (\ref{L: reduction functors and enorm})(2). Then, there is a  family ${\cal F}^r$ of such ${\cal A}^r$-modules  $N$ which is a $q'$-infinite family in ${\cal A}^r\g\mod$  with $q'<q$. So,   we can apply the induction hypothesis to ${\cal A}^r$ 
to derive the existence of a quasi-minimal  ditalgebra ${\cal Q}$ with minimal algebra  $\Q$ and a   functor $F:\Q\g\mod\rightmap{}{\cal A}^r\g\mod$ as in the statement of the theorem, so the composition $F^rF:\Q\g\mod\rightmap{}{\cal A}\g\mod$ is the wanted functor for ${\cal A}$. Here, the functor $F^r$ preserves endolength, see \cite[(2.6)]{bps}. 

\medskip
\noindent\emph{Case 2: $\delta(W_0^1 ) = 0$}.
\medskip

Since $W_0^1$ is a simple $R\g R$-bimodule, we get $W_0^1=e_jW_0^1e_i$, for some $i,j\in [1,n]$, and we have two subcases.

\medskip

\emph{Case 2.a:} If $i=j$, for simplicity, we can assume that $i=1$. 
Then, we get $T_R (W_0^1 ) = T_{D_1} (W_0^1) \times D_2 \times \cdots \times D_n$. Consider the admissible ditalgebra ${\cal A}^d$ obtained from ${\cal A}$ by deletion of the idempotent $e=1-e_1$ and the corresponding full and faithful functor $F^d:{\cal A}^d\g\Mod\rightmap{}{\cal A}\g\Mod$. Then, ${\cal Q}:={\cal A}^d$ is a quasi-minimal ditalgebra, with minimal algebra $\Q=T_{D_1}(W_0^1)$ of infinite representation type, and  the composition  of functors 
$$
\Q\g\mod \rightmap{ \LQ_{\Q} } {\cal Q}\g\mod\rightmap{F^d}{\cal A}\g\mod$$
is the required functor $F$ for ${\cal A}$, in this case.
\medskip

\emph{Case 2.b:} If $i\not= j$, for simplicity, we assume that $i=1$ and $j=2$, and we get 
$B:=T_R (W_0^1 ) = T_{D_1\times D_2}(W_0^1)  \times D_3\times \cdots\times  D_n$. 

If $B$ is of infinite representation type, so is the algebra $ T_{D_1\times D_2}(W_0^1)$. Then, we can consider the  ditalgebra ${\cal A}^d$ obtained from ${\cal A}$ by deletion of the idempotent $e=1-e_1-e_2$ and the associated  full and faithful functor $F^d:{\cal A}^d\g\Mod\rightmap{}{\cal A}\g\Mod$. Then, ${\cal Q}:={\cal A}^d$ is a quasi-minimal ditalgebra, with minimal algebra $\Q=T_{D_1\times D_2}(W_0^1)$ of infinite representation type, and    the composition of functors 
$$
\Q\g\mod \rightmap{ \LQ_{\Q} } {\cal Q}\g\mod\rightmap{F^d}{\cal A}\g\mod$$
is the required functor $F$ for ${\cal A}$.

On the other hand, if $B$ is of finite representation type, so is the algebra   $B_0:=T_{D_1\times D_2}(W_0^1)$. Let $X_1,\ldots, X_l$ be a complete set of representatives of the isomorphism classes of the indecomposables in   
$B_0\g \mod$. Since $B_0$ is hereditary of finite representation type, then the division algebras $D_{X_i}=\End_{B_0}(X_i)^{op}/\rad\End_{B_0}(X_i)^{op}$ coincide with some $D_1$ or $D_2$ (this follows for instance from \cite[VIII.1: (1.5) and (1.13)]{ars}, see also \cite{CB2}). 

Consider the $B$-module $X=X_1\oplus\cdots\oplus X_l\oplus eR$ and  the admissible ditalgebra ${\cal A}^X$ obtained from ${\cal A}$ by reduction by the module $X$ and the associated full and faithful functor $F^X:{\cal A}^X\g\mod\rightmap{}{\cal A}\g\mod$, as in (\ref{L: reduction functors and enorm}). 
The layer of ${\cal A}^X$ has the form 
$(R^X, W^X)$ with $R^X = D_{X_1}\times\cdots\times D_{X_l}\times D_3\times\cdots \times D_n$. From (\ref{L: reduction functors and enorm}), whenever $F^X(N)\cong M\in {\cal F}$, we have 
$\vert\vert N \vert\vert_e <\vert\vert M\vert\vert_e=q$. But from 
\cite[(7.3)(2)]{bps}, we know that $F^X$ is dense. So, there in an infinite family in  ${\cal F}^X = \{N \mid F^X (N ) \in {\cal F}\}$,  which shows that the admissible ditalgebra ${\cal A}^X$ is of $q'$-infinite type for some $q'<q$.  Then, we can apply the induction hypothesis to ${\cal A}^X$ and obtain a quasi-minimal ditalgebra ${\cal Q}$ and a functor $F:\Q\g\mod\rightmap{}{\cal A}^X\g\mod$ as in the statement of the theorem, so the composition $F^XF:\Q\g\mod\rightmap{}{\cal A}\g\mod$ gives the wanted functor for ${\cal A}$. Here, the functor $F^X$ controls endolength of indecomposables by  (\ref{T: reduccion a minimales}). 

We have constructed, in each case, a  functor $F:\Q\g\mod\rightmap{}{\cal A}\g\mod$ which preserves indecomposability,  reflects isomorphism classes, and controls endolength of indecomposables.  
\end{proof}

The following lemma is very useful. It can be applied,  
to  Morita equivalences.  

\begin{lemma}\label{L: endols y repembed fieles y plenos} Given two algebras $\Gamma$ and $Q$,  
assume that $\Psi:\Gamma\g\mod\rightmap{}Q\g\mod$ is a full and faithful  functor of the form $P\otimes_\Gamma-$, for some  $Q\g\Gamma$-bimodule $P$ that is finitely generated projective by the right. Then, the functor $\Psi$ controls endolength. 
\end{lemma}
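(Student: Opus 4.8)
The plan is to analyze the structure directly. Since $\Psi = P \otimes_\Gamma -$ is full and faithful with $P$ finitely generated projective as a right $\Gamma$-module, the strategy is to produce a bimodule $P'$ realizing a right quasi-inverse up to the relevant data and to use it to compare endomorphism rings. First I would fix an indecomposable finite-dimensional $\Gamma$-module $N$ and set $M = \Psi(N) = P \otimes_\Gamma N$. Since $\Psi$ is full and faithful, the canonical map $\End_\Gamma(N)^{op} \rightmap{} \End_Q(M)^{op}$ is an isomorphism of $k$-algebras; call this common algebra $E$. The point is then to compare $\ell_E(N)$ (the endolength of $N$) with $\ell_E(M)$ (the endolength of $M$), where in both cases $E$ acts on the right through this identification.

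The key observation is that $M = P \otimes_\Gamma N$ as a right $E$-module is obtained from the right $E$-module $N$ by applying $P \otimes_\Gamma -$, and since $P$ is finitely generated projective as a right $\Gamma$-module, it is a direct summand of $\Gamma^m$ for some $m$, so $P \otimes_\Gamma N$ is a direct summand of $N^m$ as a right $E$-module. Hence $\ell_E(M) \leq m\,\ell_E(N)$, i.e. $\Endol{M} \leq m\,\Endol{N}$, which gives one of the two inequalities with $c' = m$. For the reverse inequality, I would use that $\Psi$ is full and faithful together with the existence of the bimodule $P^* := \Hom_\Gamma(P,\Gamma)$, a $\Gamma\g Q$-bimodule, and the evaluation/coevaluation maps coming from $P$ being finitely generated projective on the right: the counit $P^* \otimes_Q P \rightmap{} \Gamma$ and the fact that $N$ is a direct summand of $P^* \otimes_Q P \otimes_\Gamma N = P^* \otimes_Q M$ as right $E$-modules — this last because $\Psi$ being full and faithful forces the counit of the adjunction $(P^*\otimes_Q-, P\otimes_\Gamma-)$ applied to $N$ to be a split epimorphism, in fact an isomorphism onto a summand compatible with the $E$-action. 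Then $\ell_E(N) \leq \ell_E(P^* \otimes_Q M)$, and since $P^*$ is finitely generated as a right $Q$-module (being a direct summand of some $Q^r$ as a right $Q$-module, as $P^*$ is finitely generated projective on the right over $Q$ when $P$ is finitely generated projective on the left — here one uses that $P$, being finitely generated projective over $\Gamma$ on the right and giving a full faithful functor, behaves well), we get $\ell_E(P^*\otimes_Q M) \leq r\,\ell_E(M)$, hence $\Endol{N} \leq r\,\Endol{M}$ with $c = r$.

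I expect the main obstacle to be the bookkeeping of module structures: one must check carefully that all the splittings above (the summand decompositions of $P\otimes_\Gamma N$ inside $N^m$, and of $N$ inside $P^*\otimes_Q M$) are compatible with the right $E$-module structures, where $E$ is identified on both sides via fullness and faithfulness of $\Psi$. The cleanest way to handle this is to note that $E = \End_\Gamma(N)^{op}$ acts on everything in sight by functoriality of $P\otimes_\Gamma -$ and $P^*\otimes_Q -$, so all the natural maps (unit, counit, the inclusion $P \hookrightarrow \Gamma^m$ tensored with $N$) are maps of right $E$-modules automatically; the only genuinely new input needed is that full-and-faithfulness of $\Psi = P\otimes_\Gamma-$ forces the unit $N \rightmap{} P^*\otimes_Q P\otimes_\Gamma N$ to be a (split, functorial) monomorphism, which follows because the composite with the counit recovers $\Psi$ applied to the identity, or more simply because a full and faithful functor with a right adjoint has a monomorphic unit. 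Once that is in place, length is subadditive on summands and multiplicative through free modules, and both inequalities of Definition~\ref{D: control endolength} (with $\Gamma$, $Q$ in the roles there) drop out with explicit constants $c = \rk(P^*)$ and $c' = \rk(P)$.
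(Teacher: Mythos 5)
Your first inequality is essentially the paper's argument and is correct: writing $P\oplus P'\cong\Gamma^m$ as right $\Gamma$-modules exhibits $P\otimes_\Gamma N$ as a right $E$-module direct summand of $N^m$ (the paper uses a surjection $\Gamma^n\rightmap{}P$ instead of a splitting, which suffices), and the point you flag about compatibility of the two right $E$-actions on $P\otimes_\Gamma N$ --- the one induced functorially from $E=\End_\Gamma(N)^{op}$ and the one coming from $\End_Q(P\otimes_\Gamma N)^{op}$ via fullness and faithfulness --- is exactly the verification the paper carries out. So $\Endol{P\otimes_\Gamma N}\leq m\times\Endol{N}$ is fine.

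The reverse inequality is where your proposal has a genuine gap, and it is not a bookkeeping issue. Your bound $\ell_E(P^*\otimes_Q M)\leq r\times\ell_E(M)$ needs a surjection $Q^r\rightmap{}P^*$ of right $Q$-modules, i.e.\ that $P^*=\Hom_{\Gamma}(P_\Gamma,\Gamma_\Gamma)$ be finitely generated over $Q$ on the right. Nothing in the hypotheses gives this: duality only makes $P^*$ finitely generated projective as a \emph{left} $\Gamma$-module, and in the situations where the lemma is actually applied (e.g.\ in (\ref{L: minimales gen mansas vs DIP's}), where $\Gamma$ is an infinite-dimensional principal ideal domain and $Q$ is a finite-dimensional minimal algebra) the module $P^*$ is infinite-dimensional over $k$ and hence cannot be finitely generated over the finite-dimensional algebra $Q$; the constant $c=\rk(P^*)$ you invoke simply does not exist there. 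A second, independent problem is the splitting of the counit $P^*\otimes_Q P\otimes_\Gamma N\rightmap{}N$: the standard ``fully faithful right adjoint $\Rightarrow$ counit is an isomorphism'' argument is a Yoneda argument over the whole module category, whereas the lemma only assumes full faithfulness on $\Gamma\g\mod$, and $P^*\otimes_Q P\otimes_\Gamma N$ need not be finite-dimensional; from the stated hypotheses you only get surjectivity of the counit, not a section, and without finite length of $P^*\otimes_Q M$ surjectivity alone gives no bound. The paper's route avoids the adjoint bimodule entirely: it takes a composition series $0=N_t\subseteq\cdots\subseteq N_0=N$ of $N$ as a right $E$-module, applies the flat functor $P\otimes_\Gamma-$ to obtain a chain of $\End_Q(P\otimes_\Gamma N)^{op}$-submodules of $P\otimes_\Gamma N$, and reads off $\Endol{N}\leq\Endol{P\otimes_\Gamma N}$ directly. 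If you want to salvage your write-up, this second half has to be replaced by an argument that works with $P$ itself rather than with $P^*$.
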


\begin{proof} Take $N\in \Gamma\g\mod$  and set $E:=\End_\Gamma(N)^{op}$. We know that for some natural number $n$, there is a surjective morphism $\Gamma^n\rightmap{}P$ of right $\Gamma$-modules. Therefore, we have a surjective morphism of right $E$-modules 
$$N^n\cong \Gamma^n\otimes_\Gamma N\rightmap{}P\otimes_\Gamma N.$$
Therefore, 
$\ell_E(P\otimes_\Gamma N)\leq n\times \ell_E(N)=n\times \Endol{N}$.  

Now, the action of $E$ on the right $E$-module $P\otimes_\Gamma N$ determined by the right $E$-module $N$ is such that $(p\otimes n)f=p\otimes f(n)$, for $p\in P$ and $n\in N$. The algebra $\End_Q(P\otimes N)^{op}$ acts on the right of $P\otimes_\Gamma N$ by the usual rule and determines a right action of $E$ by restriction using the isomorphism of algebras $E\rightmap{}\End_Q(P\otimes N)^{op}$ provided by the full and faithful functor $\Psi$. This action of $E$ on $P\otimes_\Gamma N$ coincides with the previous one and, hence, we obtain 
$\Endol{P\otimes_\Gamma N}\leq \ell_E(P\otimes_\Gamma N)$. 

If $0= N_t\subseteq \cdots\subseteq  N_1\subseteq N_0=N$ is a composition series for the $\End_\Gamma(N)^{op}$-module $N$, since $P$ is a projective right $\Gamma$-module, we have a filtration $0= P\otimes_\Gamma N_t\subseteq \cdots \subseteq P\otimes_\Gamma N_1\subseteq P\otimes_\Gamma N_0=P\otimes_\Gamma N$ of the $Q$-module  $P\otimes_\Gamma N$, which is in fact a filtration of the $\End_Q(P\otimes_\Gamma N)^{op}$-module  $P\otimes_\Gamma N$. Hence $\Endol{N}\leq \Endol{P\otimes_\Gamma N}$.  
\end{proof}

\bigskip
\noindent{\bf Proof of Theorem (\ref{main thm for algs with minimal's}):} As remarqued just before (\ref{R: Lambda tri implies D no es e-dscreto}), we can assume that $\Lambda$ is basic because Morita equivalences are representation embeddings which control endolength of indecomposables, see (\ref{L: endols y repembed fieles y plenos}).

From (\ref{R: Lambda tri implies D no es e-dscreto}), we can apply (\ref{T: reduccion a minimales}) to the Drozd's ditalgebra ${\cal D}$ of $\Lambda$ to obtain a quasi-minimal ditalgebra ${\cal Q}$, with minimal algebra $\Q$ of infinite representation type, and a composition of functors $F$ of the form 
$$\Q\g\mod
\rightmap{\LQ_{\Q}}{\cal Q}\g\mod\rightmap{G}{\cal D}\g\mod,$$
as in (\ref{T: reduccion a minimales}), which preserves indecomposability,  reflects isomorphism classes, and controls endolength of indecomposables.

 From \cite[(22.7)]{bsz}, we obtain that $GE(\Q)$ is a $D\g \Q$-bimodule which is finitely generated by the right and  that $F\cong L_{\cal D}(GE(\Q)\otimes_\Q-)$, where  the argument is the functor 
$GE(\Q)\otimes_\Q-:\Q\g\mod\rightmap{}D\g\mod$. Here $D$ denotes the subalgebra of the underlying tensor algebra of ${\cal D}$ formed by the elements of degree 0.

Consider the usual equivalence functor $\Xi_\Lambda:{\cal D}\g\mod\rightmap{}{\cal P}^1(\Lambda)$ and the cokernel funtor $\Cok:{\cal P}^1(\Lambda)\rightmap{}\Lambda\g\mod$, see \cite[\S18 and \S19]{bsz}. Denote by $Z_0=\Cok\Xi_\Lambda(D)$ the transition bimodule associated to $\Lambda$, as in \cite[(22.18)]{bsz}. Then, the $\Lambda\g \Q$-bimodule  $Z:=Z_0\otimes_DGE(\Q)$  is finitely generated by the right. Denote by $H$ the composition of functors 
$$\Q\g\mod\rightmap{F}{\cal D}\g\mod\rightmap{\Xi_\Lambda}{\cal P}^1(\Lambda)\rightmap{\Cok}\Lambda\g\mod.$$
Then, by \cite[(22.18)]{bsz}, we know that it is naturally isomorphic to 
$$\Cok\Xi_\Lambda L_{\cal D}(GE(\Q)\otimes_\Q-)\cong Z_0\otimes_DGE(\Q)\otimes_\Q-=Z\otimes_\Q-.$$
Now, if $\Q$ is a minimal algebra of the second type, we have, as in \cite[Claim 1 in the proof of (8.2)]{bps}, that every $M\in \Q\g\mod$ satisfies that $\Xi_\Lambda F(M)\in {\cal P}^2(\Lambda)$. Thus, in this case, the functor $H$ preserves indecomposables and reflects isomorphism classes.

If $\Q$ is a minimal algebra of the first type, we have as in \cite[Claim 2 in the proof of (8.2)]{bps}, that for any non-injective indecomposable $M\in \Q\g\mod$ we have $\Xi_\Lambda F(M)\in {\cal P}^2(\Lambda)$. So,  in this case $H$ preserves indecomposables which are not injective (since it is a composition of functors which preserve non-injectivity). Moreover, the restriction $\Q\g\mod_{\cal I}\rightmap{}\Lambda\g\mod$ of $H$, to the full subcategory of $\Q$-modules without injective direct summands,  reflects isomorphism classes. Indeed, if $M,N\in \Q\g\mod_{\cal I}$, then $\Xi_\Lambda F(M)$ and $\Xi_\Lambda F(N)$ have no injective direct summand. Thus, from \cite[(18.10)]{bsz}, we get that $H(M)\cong H(N)$ implies that $M\cong N$. 

 We know that $F$ controls endolength and,  from \cite[(4.4)]{bps}, we get that $\Cok \Xi_\Lambda$ controls endolength, then so does their composition $H$. 
$\hbox{ }\hfill\square$

\begin{remark} Under the assumptions of  theorem (\ref{main thm for algs with minimal's}), assume moreover that  $\Lambda$ is such that $\Lambda/\rad\Lambda$ is a finite product of copies of $k$. In this case, the minimal algebras appearing there  have their coefficient algebras isomorphic to $k$. So, there   
 is no such minimal algebra of the first type of infinite representation type, and the only  such minimal algebra of the second type of infinite representation type is $k[x]$. Then,  we get a functor 
 $H : k[x]\g\mod \rightmap{}\Lambda\g\mod$, which preserves indecomposables and reflects isomorphism classes, given
by tensoring with a bimodule $\,_\Lambda Z_{k[x]}$ which is finitely generated 
 over $k[x]$. From here we obtain a representation embedding $H':k[x]_h\g\mod \rightmap{}\Lambda\g\mod$, given by tensoring with a bimodule 
 $\,_\Lambda Z'_{k[x]_h}$ which is finitely generated projective 
 over $k[x]_h$. This is the case if the field $k$ is algebraically closed and $\Lambda$ is basic, see Thm. 6 of \cite{Bo1}. 
\end{remark}

\section{Reduction to principal ideal domains}

In this work, we use the term uniserial subcategory in the following sense. 

\begin{definition}\label{R: uniserial cats}
Let ${\cal U}$ be a full abelian subcategory of the category $\Sigma\g\mod$ of finite-dimensional  modules for some $k$-algebra $\Sigma$. Then ${\cal U}$ is called \emph{a uniserial subcategory of $\Sigma\g\mod$} iff every indecomposable module in ${\cal U}$ has a unique composition series in ${\cal U}$, and all the simple factors in  this composition series are isomorphic to one simple object of the category ${\cal U}$.
\end{definition}

\begin{lemma}\label{L: Gama-mod es prod de uniseriales con simple}
Let $\Gamma$ be a bounded principal ideal domain and let ${\cal P}$ be a complete set of representatives of the non-similar atoms of $\Gamma$. Then, the category of finite-length $\Gamma$-modules is a direct sum of uniserial subcategories 
$\{{\cal U}_p\}_{p\in {\cal P}}$, where each ${\cal U}_p$ contains a unique simple object $E_1^p$.

Moreover, we have almost split sequences of the form: 
$$\begin{matrix}\xi_1  : \  E^p_1  \rightmap{} E^p_2\rightmap{} E^p_1,\\
 \,\\
\zeta_n : \ E^p_n \rightmap{} E^p_{n+1}\oplus E^p_{n-1}\rightmap{}E^p_n, \hbox{ for } n \geq  2.\\
\end{matrix}$$
\end{lemma}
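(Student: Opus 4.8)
The plan is to use the standard structure theory of finitely generated modules over a (noncommutative) principal ideal domain, as developed classically (Jacobson, Cohn). First I would recall that a bounded principal ideal domain $\Gamma$ is in particular a noncommutative PID, so every finite-length left $\Gamma$-module $M$ is a finite direct sum of cyclic modules of the form $\Gamma/\Gamma p^m$ where $p$ ranges over atoms; moreover the ``similarity'' equivalence relation on atoms partitions them, and $\Gamma/\Gamma p \cong \Gamma/\Gamma q$ as left $\Gamma$-modules precisely when $p$ and $q$ are similar. Fixing a complete set ${\cal P}$ of representatives of the similarity classes, I would define ${\cal U}_p$ to be the full subcategory of finite-length modules all of whose composition factors are isomorphic to $E_1^p := \Gamma/\Gamma p$. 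The primary decomposition theorem then gives that every finite-length module splits uniquely as a direct sum over $p\in{\cal P}$ of its $p$-primary part, which establishes that the category of finite-length $\Gamma$-modules is the direct sum of the ${\cal U}_p$; each ${\cal U}_p$ is abelian (closed under sub, quotient, extension inside finite-length modules) since having all composition factors $\cong E_1^p$ is preserved under these operations.

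Next I would identify the indecomposables in ${\cal U}_p$. Over a PID the indecomposable $p$-primary modules are exactly the ``primary cyclic'' ones $E_n^p := \Gamma/\Gamma p^n$ for $n\geq 1$ — this is again part of the classical structure theorem, using that $\Gamma/\Gamma p^n$ has a local endomorphism ring (its radical corresponds to multiplication by $p$) and a unique composition series $\Gamma p^{n-1}/\Gamma p^n \subset \cdots \subset \Gamma p/\Gamma p^n \subset \Gamma/\Gamma p^n$, all of whose factors are $\cong E_1^p$. This gives the uniseriality claim directly. That the $E_n^p$ are pairwise non-isomorphic follows from comparing lengths.

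Finally I would construct the almost split sequences. The short exact sequences are the obvious ones: $0 \to E_1^p \to E_2^p \to E_1^p \to 0$ coming from $\Gamma p/\Gamma p^2 \hookrightarrow \Gamma/\Gamma p^2 \twoheadrightarrow \Gamma/\Gamma p$, and for $n\geq 2$ the sequence $0\to E_n^p \to E_{n+1}^p\oplus E_{n-1}^p \to E_n^p\to 0$ with the map $E_n^p\to E_{n+1}^p$ induced by multiplication by $p$ (i.e.\ $\Gamma/\Gamma p^n \to \Gamma/\Gamma p^{n+1}$, $x+\Gamma p^n \mapsto xp + \Gamma p^{n+1}$) together with the canonical projection $E_n^p \to E_{n-1}^p$, and the right-hand map assembled analogously. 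To see these are almost split I would verify: they are non-split (the middle term is not $\cong E_{n-1}^p\oplus E_{n+1}^p$ with the projection splitting, by a length/indecomposability count); the left-hand term $E_n^p$ has local endomorphism ring; and every map $E_n^p \to E_n^p$ that is not an isomorphism — equivalently, that factors through $\rad$, i.e.\ has image inside $\Gamma p/\Gamma p^n$ — lifts along the inclusion $E_n^p \hookrightarrow E_{n+1}^p\oplus E_{n-1}^p$. The lifting is checked by hand using that any non-iso endomorphism of $\Gamma/\Gamma p^n$ is right multiplication by an element of $\Gamma p + \Gamma p^n$ modulo the annihilator, hence factors through the inclusion of $E_n^p$ into $E_{n+1}^p$ via division by $p$. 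Equivalently one may invoke the general fact that in a uniserial category of this shape (a ``tube of rank one''), the displayed sequences are forced to be the almost split ones; I would cite Auslander–Reiten theory, e.g.\ \cite{ars}, for the existence and uniqueness of almost split sequences and match them to the displayed ones by their left and right terms.

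The main obstacle I anticipate is purely bookkeeping in the noncommutative setting: one must be careful that ``$p^n$'' generates a genuine two-sided-free-on-one-side ideal and that similarity (rather than associativity) is the right equivalence on atoms, and that the endomorphism ring computations $\End_\Gamma(E_n^p)^{op}$ and its radical behave as in the commutative case — this uses boundedness of $\Gamma$ to guarantee finite length of the relevant Hom-spaces. Once the structure theorem is in hand, the almost split sequences are essentially formal.
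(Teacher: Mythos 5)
Your outline follows the same classical route (Jacobson--Cohn structure theory) that underlies the reference the paper actually leans on, namely \cite[(6.5)]{bps}, but it has a genuine gap at the central point. The elementary divisor theorem for a noncommutative PID only decomposes a finite-length module into cyclic summands $\Gamma/\Gamma q$ with $q$ a product of pairwise similar atoms; it does \emph{not} tell you that such summands are indecomposable, nor that the indecomposables of ${\cal U}_p$ are exactly the $\Gamma/\Gamma p^n$ with the displayed chain as their \emph{unique} composition series. Both claims require real work and fail without boundedness: for the unbounded PID $\Gamma=K(y)[x;d/dy]$ one has $x^2=(x+\tfrac{1}{y+c})(x-\tfrac{1}{y+c})$ for every constant $c$, so $\Gamma/\Gamma x^2\cong(\Gamma/\Gamma x)^2$ is semisimple; and even over a bounded PID a product of two \emph{similar} atoms can give a decomposable module, e.g.\ $\hueca{H}[x]/\Gamma(x-i)(x+i)=\hueca{H}[x]/(x^2+1)\cong M_2(\hueca{C})$ is a sum of two simples. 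So the assertions that $\Gamma/\Gamma p^n$ has simple socle (equivalently, that $p$ is, up to left association, the only right atomic divisor of $p^n$) and that every indecomposable of length $n$ with composition factors $S_p$ is isomorphic to it are precisely what must be proved, and this is where boundedness enters --- not, as you say, merely to ``guarantee finite length of the relevant Hom-spaces.'' The paper quotes \cite[(6.5)]{bps} for the existence and uniqueness of $E_n^p$ and for the irreducible epimorphism $E_n^p\to E_{n-1}^p$, and then establishes uniqueness of the composition series by induction on length: any simple submodule is forced into the socle $M_1$ (otherwise a top sequence would split the indecomposable $M$), and $M/M_1\cong E_{n-1}^p$ is again indecomposable because the irreducible epimorphism has simple kernel. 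Your proposal needs a substitute for exactly this step.

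Two further points on the almost split sequences. The condition you propose to verify --- that non-isomorphisms $E_n^p\to E_n^p$ lift through the left-hand map --- is not the defining property of an almost split sequence; one must factor arbitrary non-split epimorphisms $X\to E_n^p$ (equivalently, non-split monomorphisms out of $E_n^p$), which presupposes control of all the Hom-spaces $\Hom_\Gamma(E_m^p,E_n^p)$, i.e.\ the classification you have not yet secured. Moreover, \cite{ars} proves existence of almost split sequences for artin algebras, whereas $\Gamma$ is not artinian, so even the existence statement for finite-length $\Gamma$-modules needs a separate justification (again supplied by \cite[(6.5)]{bps}). Once those inputs are in place, your explicit sequences are indeed the correct ones and the remaining verification is routine.
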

 
\begin{proof} The category of finite-length $\Gamma$-modules is well known. For instance, from \cite[(6.5)]{bps}, we have the following. 
For each atom $p \in \Gamma$, consider the corresponding simple $\Gamma$-module $S_p := \Gamma/\Gamma p$. Then, for each $i \in \hueca{N}$, up to isomorphism, there is a unique indecomposable $\Gamma$-module $E_i^p$ with length $i$ and all composition factors isomorphic to $S_p$. The family $\{E_i^p \mid i \in \hueca{N}, p \in {\cal P}\}$ is a complete set of representatives of the isoclasses of the indecomposable $\Gamma$-modules of finite length. They are all finite-dimensional.   
 
For $p\in {\cal P}$, consider the full subcategory ${\cal U}_p$ of $\Gamma\g\mod$  formed by the modules $M\in \Gamma\g\mod$ which admit a filtration 
$$0=M_0\subset M_{1}\subset\cdots \subset M_{n-1}\subset  M_n=M,$$ 
where $M_j/M_{j-1}\cong S_p$, for all $j\in [1,n]$. Thus, $\{E_i^p\mid i\in \hueca{N}\}$ are the representatives of  the indecomposables in ${\cal U}_p$. Moreover, in the proof of \cite[(6.5)]{bps}, it is shown that $\Hom_\Gamma(E_i^p,E_j^q)=0$, whenever $p$ and $q$ are not similar atoms, that is whenever $p\not= q$ in ${\cal P}$.

It remains to show that ${\cal U}_p$ is uniserial. Let $M$ be an indecomposable $\Gamma$-module in ${\cal U}_p$. Then, it  has a composition series 
$$0=M_0\subset M_{1}\subset\cdots \subset M_{n-1}\subset  M_n=M,$$ 
where $M_i/M_{i-1}\cong S_p$. 
Here $\ell_\Gamma(M_i)=i$, for all $i\in [1,n]$. 

\medskip
\emph{We will show that $M_j$ is the unique submodule of $M$ with length $j$.}
\medskip

We proceed by induction on the length $n=\ell_\Gamma(M)$. 

 If $n=1$, we have that $M=M_1$ is simple and our claim is obvious. 
 Assume that our claim holds for indecomposable modules in ${\cal U}_p$ with length $s$ such that  $s<n$.   
  We have an exact sequence 
 $$0\rightmap{}M_{n-1}\rightmap{f}M_n\rightmap{g}S_p\rightmap{}0.$$ 
 Let $S$ be a simple submodule of $M$ and $h:S\rightmap{}M_n$ the corresponding  inclusion. Since $M$ is indecomposable, we know that $gh:S\rightmap{}S_p$ is not an isomorphism, so $gh=0$ and $S\subseteq M_{n-1}$. Similarly, working with the given filtration of $M_{n-1}$, we show that $S\subseteq M_{n-2}$. In a finite number of steps, we get $S\subseteq M_1$.  Hence $S=M_1$. We have verified that the socle of $M$ is the simple $M_1$. 
 
 Consider now a submodule $N$ of $M_n$ with length $j>1$. Since $\soc(M)=M_1$, we know that $M_1=\soc(N)\subset N$. In particular, the submodules $M_1,\ldots,M_{n-1}$ are all indecomposable. We have the quotient $\eta:M_n\rightmap{}M_n/M_1$, 
and the submodules $\eta(M_j)=M_j/M_1$ and $\eta(N)=N/M_1$ with length $j-1$ of the module $M/M_1\in {\cal U}_p$, which has length $n-1$. 
\medskip

From \cite[(6.5)]{bps}, we know there is an irreducible morphism $g:M_n\rightmap{}M_{n-1}$ in $\Gamma\g\mod$, which must be surjective. Its kernel has length 1, so it is a simple submodule of $M_n$, and it coincides with $M_1$. It follows  that $M/M_1\cong M_{n-1}$ is indecomposable.  
Then, by induction hypothesis
 $N/M_1=M_j/M_1$ and, so, $M_j=N$, as we wanted to verify. 
\end{proof}

\begin{proposition}\label{P: preproj components discrete in heredit case}
Let $Q$ be a finite-dimensional hereditary algebra of infinite representation type. Then, its preprojective components, as well as its preinjective components, are $e$-discrete.  
\end{proposition}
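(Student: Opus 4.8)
The plan is to use the explicit combinatorial description of preprojective (dually, preinjective) components of a connected hereditary algebra $Q$ of infinite representation type. Recall that such a component is obtained from the Auslander--Reiten quiver of the path category by a translation-free orbit: if $P_1,\ldots,P_m$ are the indecomposable projectives, then the preprojective component consists exactly of the modules $\tau^{-t}P_i$ for $t\geq 0$ and $i\in[1,m]$, all of which are pairwise non-isomorphic (here infinite representation type guarantees no $\tau^{-t}P_i$ is projective for $t\geq 1$ in a way that collapses the orbit, and in fact the orbits are infinite). So the first step is: fix the list of all indecomposables in the preprojective component as $\{X_{i,t}:=\tau^{-t}P_i\}$, and recall from standard hereditary theory (e.g.\ \cite{ars}, \cite{ringel-tame}) that the dimension vectors satisfy a linear recursion governed by the Coxeter transformation, $\underline{\dim}\,\tau^{-1}M = \Phi^{-1}\underline{\dim}\,M$ whenever $M$ has no projective summand. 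The key consequence I want is that $\dim_k X_{i,t}\to\infty$ as $t\to\infty$, for each fixed $i$; since there are only finitely many $i$, for any bound $N$ there are only finitely many indecomposables $X_{i,t}$ in the component with $\dim_k X_{i,t}\leq N$.

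Next, I would translate a bound on endolength into a bound on $k$-dimension using Lemma \ref{L: dim vs endol}: for an indecomposable finite-dimensional $Q$-module $M$ with $E:=\End_Q(M)^{op}$ and division quotient $K_M=E/\rad E$, one has $\dim_k M = \Endol{M}\cdot\dim_k K_M$. So to bound $\dim_k M$ in terms of $\Endol{M}$ it suffices to bound $\dim_k K_M$ uniformly over the component. For modules in a preprojective component this is easy: each $X_{i,t}=\tau^{-t}P_i$ is a ``brick up to the coefficient field'' in the sense that $\End_Q(X_{i,t})^{op}$ is a local algebra whose division quotient is $\End_Q(P_i)^{op}/\rad(\cdots)=D_i$, the endomorphism division algebra of the simple top of $P_i$ — because $\tau^{-1}$ is an equivalence between the relevant module categories mod projectives/injectives and preserves these local endomorphism algebras modulo radical, or more directly because preprojective modules are ``directing'' and have no self-extensions, forcing $\End_Q(X_{i,t})^{op}$ to have the same division quotient as the projective at the start of its orbit. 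In any case $\dim_k K_M$ is bounded by $\max_i \dim_k D_i=:c'$, a constant depending only on $Q$.

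Combining the two steps: given a fixed endolength $d$, any indecomposable $M$ in the preprojective component with $\Endol{M}=d$ satisfies $\dim_k M = d\cdot\dim_k K_M\leq c'd$, and by the first step there are only finitely many indecomposables in the component with $k$-dimension at most $c'd$. Hence only finitely many isoclasses in the preprojective component have endolength $d$, which is exactly the assertion that the component is $e$-discrete. The preinjective case is dual: replace $\tau^{-t}P_i$ by $\tau^t I_i$, use that $\dim_k\tau^t I_i\to\infty$, and that the division quotient of $\End_Q(\tau^tI_i)^{op}$ is the endomorphism division algebra of the simple socle of $I_i$. The main obstacle I anticipate is pinning down cleanly the uniform bound on $\dim_k K_M$ over the whole component — i.e.\ verifying that $\End_Q(\tau^{-t}P_i)^{op}/\rad$ is independent of $t$; the cleanest route is probably to invoke that preprojective components are directed and that for a module $M$ on such a component $\dim_k\End_Q(M) - \dim_k\Ext^1_Q(M,M)$ equals the value of the Euler form $\langle\underline{\dim}M,\underline{\dim}M\rangle$, together with $\Ext^1_Q(M,M)=0$ for directing modules, to control $\End_Q(M)$; but for the division quotient specifically I would lean on the Auslander--Reiten-theoretic fact that $\tau^{-1}$ induces, on the stable categories, an equivalence carrying the local algebra $\End(P_i)^{op}$ to $\End(\tau^{-1}P_i)^{op}$ compatibly with radicals, as recorded in \cite[VIII]{ars}.
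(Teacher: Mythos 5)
Your proposal is correct and follows essentially the same route as the paper: both reduce $e$-discreteness of the component to discreteness via Lemma (\ref{L: dim vs endol}) combined with the fact that the endomorphism algebra of an indecomposable preprojective is, modulo its radical, the division algebra $\End_Q(P_i)^{op}$ of the projective starting its $\tau$-orbit --- the uniform bound on $\dim_k K_M$ you were worried about is exactly \cite[VIII.1.(1.5)]{ars}, which the paper simply cites. The only divergence is the discreteness step itself: where you argue via growth of $\dim_k\tau^{-t}P_i$ along orbits (which needs the spectral properties of the Coxeter transformation, or the defect, to be airtight), the paper uses that preprojective indecomposables are determined by their dimension vectors (\cite[VIII.2.(2.3)]{ars}), of which only finitely many can share a given total $k$-dimension.
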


\begin{proof} We only consider the case of a preprojective component ${\cal P}$, the other case is tackled dually. If   $M\in {\cal P}$,  there is a projective module $P_M\in {\cal P}$ and a non-negative number $\nu(M)$, such that $(DTr)^{\nu(M)}M=P_M$. We also know, that $\End_Q(M)\cong \End_Q(P_M)$ is a division algebra, see \cite[VIII.1.(1.5)]{ars}. Therefore, from (\ref{L: dim vs endol}), we obtain $\Endol{M}=\dim_kM/\dim_k\End_Q(P_M)$.  This implies that there is only a finite number of indecomposables in ${\cal P}$ with the same endolength iff there is only a finite number of indecomposables in ${\cal P}$ with the same dimension. That is ${\cal P}$ is $e$-discrete iff it is discrete. 

So, we have to show that there are only finitely many indecomposables in ${\cal P}$ with the same dimension. For this, we can assume that $Q$ is basic and consider a decomposition of the unit $1=\sum_{i=1}^ne_i$ of $Q$, as a sum of primitive orthogonal idempotents, denote by $\underline{\dim}(M)=(\dim_k e_1M,\ldots,\dim_k e_nM)$,  \emph{the dimension vector of} $M$. Recall that given $M,N\in {\cal P}$, we have that $M\cong N$ iff their their dimension vectors coincide, see \cite[VIII.2.(2.3)]{ars}. Then, consider the homological quadratic form $q$ of the hereditary algebra $Q$, which satisfies  
$$q(\underline{\dim}(M))=\dim_k\End_Q(M)-\dim_k\Ext_Q^1(M,M).$$
Since $M$ is preprojective, we have $\Ext_Q^1(M,M)=0$, see \cite[VIII.1.(1.7)]{ars}. Therefore, we get that for any $M\in {\cal P}$, $q(\underline{\dim}(M))\in \{d_1,\ldots,d_n\}$, where $d_i:=\dim_k\End_Q(Qe_i)$, for $i\in [1,n]$. There are only finitely many possible dimension vectors $\underline{\dim}(M)$ satisfying this, so ${\cal P}$ is discrete.   
\end{proof}

The following statement is similar to  \cite[(6.8)]{bps}. 

\begin{proposition}\label{L: minimales gen mansas vs DIP's} 
If $\Q$ is a generically tame minimal algebra of infinite representation type, then there is a bounded principal ideal domain $\Gamma$ which is not 
$e$-discrete and an exact full and faithful functor  $\Psi: \Gamma\g\Mod\rightmap{}\Q\g\Mod$. 
The functor $\Psi$ restricts to a representation embedding $\Psi':\Gamma\g\mod\rightmap{}\Q\g\mod_{\cal I}$.  
\end{proposition}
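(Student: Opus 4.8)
The plan is to analyze the two types of minimal algebra separately and, in each case, to produce the bounded principal ideal domain $\Gamma$ together with the functor $\Psi$ by a concrete localization argument. If $\Q = T_D(V)$ is of the second type with $D$ a finite-dimensional division $k$-algebra and $V$ a simple $D$-$D$-bimodule, then $\Q$ is a (possibly noncommutative) hereditary order-type algebra, and generic tameness forces $V$ to be one-dimensional on at least one side, so that $\Q$ is isomorphic to a skew polynomial ring $D[x;\sigma]$ (or, when $V$ is $D$-$D$-bimodule-isomorphic to $D$ twisted by an automorphism $\sigma$, to $D[x;\sigma]$); I would invoke generic tameness exactly to rule out the wild shapes and land on such a skew-polynomial presentation. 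Then $\Gamma$ is obtained by inverting a suitable multiplicatively closed set so as to make it a bounded PID with a single atom up to similarity, and $\Psi$ is the restriction-of-scalars functor along $\Q \to \Gamma$ — or rather, one takes $\Psi = \Gamma \otimes_\Q -$ composed the other way; more precisely I would take $\Gamma$ to be a localization of $\Q$ itself, so that $\Gamma$ is flat over $\Q$ and the functor $\Psi : \Gamma\g\Mod \to \Q\g\Mod$ is simply restriction of scalars, which is automatically exact, full and faithful because $\Gamma$ is an epimorphic (flat) localization of $\Q$.

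For the first type, $\Q = T_{D_1\times D_2}(V)$ with $V$ a simple $D_1$-$D_2$-bimodule: here generic tameness again restricts the bimodule dimensions, and the indecomposable non-injective (equivalently, the "regular"/non-preprojective-non-preinjective behaviour is trivial here since $\Q$ is a hereditary algebra of a bipartite quiver with one arrow of a controlled dimension) finite-dimensional modules that are not of the two exceptional "projective-injective-like" shapes should be parametrized, after deleting the finitely many exceptional ones, by a category equivalent to finite-length modules over a bounded PID $\Gamma$. The cleanest route is: pass to the "regular part" via a perpendicular-category / localization construction, identify it with $\Gamma\g\mod$ for the appropriate $\Gamma$, and check that the resulting embedding $\Gamma\g\mod \to \Q\g\mod$ avoids injectives, i.e. lands in $\Q\g\mod_{\cal I}$. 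Throughout, I would lean on Lemma \ref{L: Gama-mod es prod de uniseriales con simple} to know exactly what $\Gamma\g\mod$ looks like (a direct sum of uniserial categories with prescribed almost split sequences), and match that picture against the known structure of $\Q\g\mod$.

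The key global steps, in order, are: (1) use generic tameness to pin down the isomorphism type of $\Q$ in each of the two cases (this is where the hypothesis is really consumed); (2) construct $\Gamma$ as an explicit localization or a skew-polynomial / tube-type algebra, and verify it is a bounded PID that is not $e$-discrete — non-$e$-discreteness follows because $\Gamma$ has, for the fixed atom $p$, the infinite family $\{E_n^p\}$, and one computes, via Lemma \ref{L: dim vs endol} and the fact that $\End_\Gamma(E_n^p)/\rad$ is the residue division ring of $p$, that infinitely many of these share a common endolength (indeed all $E_n^p$ with $n$ ranging over a fixed residue pattern), so $\Gamma$ witnesses the needed family; (3) define $\Psi$ as restriction of scalars along the localization map (or as $\mathrm{Hom}$/tensor with the appropriate bimodule) and check exactness, fullness and faithfulness — exactness is immediate from flatness, and fullness/faithfulness from the localization being an epimorphism of rings; (4) finally restrict $\Psi$ to finite-dimensional modules and verify that its image consists of finite-dimensional $\Q$-modules with no injective summand, giving $\Psi' : \Gamma\g\mod \to \Q\g\mod_{\cal I}$, which is then a representation embedding since a full faithful exact functor automatically preserves indecomposability and reflects isomorphism classes.

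The main obstacle I anticipate is step (1) combined with the noncommutativity in the second type: over a perfect but non-algebraically-closed field, $D$ can be a genuine division algebra and $V$ a simple $D$-$D$-bimodule that is not free of rank one on the relevant side, so establishing that generic tameness forces a skew-polynomial presentation $D[x;\sigma]$ (rather than, say, something with $\dim_D V > 1$ on both sides, which would be generically wild) requires a careful bimodule-and-dimension bookkeeping argument, presumably mirroring the computation behind \cite[(6.8)]{bps}. A secondary subtlety is ensuring the localization $\Gamma$ is genuinely \emph{bounded} as a PID (i.e. every proper quotient is of finite length / artinian), which must be read off from the structure of $\Q$ rather than assumed; this is where I expect to cite or adapt the description of $\Gamma\g\mod$ from Lemma \ref{L: Gama-mod es prod de uniseriales con simple} in reverse, to confirm $\Gamma$ has the right module category.
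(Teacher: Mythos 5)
Your overall architecture (split by type, reduce the second type to a skew polynomial ring $D[x,\sigma]$, and for the first type identify the regular part of $\Q\g\mod$ with $\Gamma\g\mod$ via Dlab--Ringel/Crawley-Boevey, then rule out injective summands in the image) matches the paper's proof, which indeed quotes \cite[(6.2), (6.6), (6.8)]{bps} for exactly these steps. But your verification of the key property that \emph{$\Gamma$ is not $e$-discrete} rests on a false computation. You claim that, for a fixed atom $p$, infinitely many of the modules $E_n^p$ share a common endolength. They do not: by (\ref{P: endol(Mn)=n endol(M1) en U unis}) (equivalently (\ref{C: endol de inesc vs endol del soclo en Gamma-mod})) one has $\Endol{E_n^p}=n\times\Endol{E_1^p}$, so within a single uniserial block each endolength occurs at most once. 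Non-$e$-discreteness of a bounded PID can only come from infinitely many pairwise non-similar atoms whose simple modules have bounded endolength. This is exactly what the paper establishes: for $\Gamma=D[x,\sigma]$ via (\ref{R: Gama=D[x,s] no e-discreto}) (using the center $F[t]$ and the bound of (\ref{L: cotas para endol de Gama-simples})), and in the tame hereditary case by transporting the failure of $e$-discreteness of $\Q\g\reg$ (which follows from (\ref{P: preproj components discrete in heredit case}) plus the fact that the exceptional uniserial piece ${\cal U}$ contributes at most one indecomposable per endolength) back through $\Psi$ using (\ref{L: endols y repembed fieles y plenos}).

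The same error propagates into your construction in the second type: you propose to localize $D[x,\sigma]$ down to ``a bounded PID with a single atom up to similarity.'' Such a ring \emph{is} $e$-discrete (its finite-length module category is a single uniserial block), so this $\Gamma$ cannot satisfy the proposition. No localization is needed here: the paper simply takes $\Gamma=\Q=D[x,\sigma]$ and $\Psi=\mathrm{id}$. Two smaller points: in the first-type case the image of $\Psi$ on finite-length modules is not all of $\Q\g\reg$ but its complement in one exceptional tube, which is why the extra uniserial category ${\cal U}$ and its endolength behaviour must be accounted for; and your ``check that the image avoids injectives'' needs an actual argument --- the paper's is that an injective summand $I\cong\Psi(I')$ would force the almost split sequence starting at $I'$ to be sent by the exact functor $\Psi$ to a split sequence, a contradiction.
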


\begin{proof} This proof is an adaptation of the proof of \cite[(6.8)]{bps}. If
 $\Q$ is of the second type in (\ref{D: minimal algebra}). From \cite[(6.2)]{bps}, we can assume that $\Q = D[x, s]$, that is the skew polynomial algebra with coefficients in $D$, for some automorphism $s:D\rightmap{}D$. It is well known that $\Gamma := D[x, s]$ is a bounded principal ideal domain (see \cite[(3.15)]{J}) and 
taking as $\Psi$ the identity functor, we see, from (\ref{R: Gama=D[x,s] no e-discreto}), that our proposition holds in this case.

The generically tame minimal algebras $\Q$ of infinite representation type of the first type in (\ref{D: minimal algebra}) can be described as follows, see \cite[(6.6)]{bps}:

\begin{enumerate}
\item[(i)] $\Q$ is the matrix algebra $\begin{pmatrix}F&0\\ M&G\end{pmatrix}$, where $F$ and $G$ are finite-dimensional division $k$-algebras 
and $M$ is a simple $G\g F$-bimodule where the field $k$ acts centrally. Moreover, $\dim_G M = 2 = \dim M_F$; or
\item[(ii)] $\Q$ is the matrix algebra $\begin{pmatrix}F&0\\ M&G\end{pmatrix}$, where $F$ and $G$ are finite-dimensional division $k$-algebras 
and $M$ is a simple $G\g F$-bimodule where the field $k$ acts centrally.
 Moreover, $\dim_G M = 4$ and $\dim M_F =1$, or $\dim_G M =1$ and $\dim M_F =4$.
\end{enumerate}

In each one of these cases, it is shown in the proof of \cite[(6.8)]{bps}, using previous work by V. Dlab and M. Ringel  \cite{DR}\&\cite{DRMem}, and Crawley-Boevey \cite{CB0}, that there is a  bounded principal ideal domain $\Gamma$ and an exact full and faithful functor $\Psi:\Gamma\g\Mod\rightmap{}\Q\g\Mod$ mapping indecomposable $\Gamma$-modules of finite length onto regular $\Q$-modules. Moreover, $\Q\g\reg \cong 
\Psi(\Gamma\g\mod) \coprod {\cal U}$, where ${\cal U}$ is a  uniserial subcategory of $\Q\g\mod$ generated by a simple regular module. Furthermore, from (\ref{P: endol(Mn)=n endol(M1) en U unis}), 
 we see that, for each $d\in \hueca{N}$, there is, up to isomorphism, at most one indecomposable in ${\cal U}$, having endolength $d$.

As we already know (by \cite{ficlifts} or \cite[(8.6)]{CB3}), since $\Q$ has infinite representation type, it is not $e$-discrete. 
In fact, we know that $\Q\g\reg$ is not $e$-discrete, as a consequence of (\ref{P: preproj components discrete in heredit case}). Thus, there is some $d\in \hueca{N}$ and an infinite family of non-isomorphic indecomposable modules $\{\Psi(M(i))\}_i$ in $\Q\g\reg$ with    
endolength $\Endol{{\Psi(M(i))}}=d$. Since $\Psi$ controls endolength, see (\ref{L: endols y repembed fieles y plenos}), there is a subfamily $\{M(j)\}_j$ of $\{M(i)\}_i$ of non-isomorphic indecomposables in $\Gamma\g\mod$ with common endolength, and $\Gamma$ is not $e$-discrete.

The statement on the restriction of $\Psi$ is due to the following. If $N\in \Gamma\g\mod$ is such that $\Psi(N)$ admits an indecomposable injective direct summand $I$, say $\Psi(N)=I\oplus C$,  from \cite[(29.5)]{bsz}, we know that there is  an indecomposable direct summand $I'$ of $N$ such that $\Psi(I')\cong I$. Now, by the description of almost split sequences in $\Gamma\g\mod$, see \cite[(6.5)]{bps}, we know that $I'$ appears at the left of an almost split sequence $\xi$. Then, $\Psi(\xi)$ is an exact sequence in $\Q\g\mod$ which must split. Then, $\xi$ also splits, a contradiction.   So, the functor $\Psi$ restricts to a functor $\Psi':\Gamma\g\mod\rightmap{}\Q\g\mod_{\cal I}$, as wanted. 
\end{proof}

Notice that in the preceding proposition,  if $\Q$ is a minimal  algebra of type 2 or of type 1.(ii), the principal ideal domain $\Gamma$ is centrally bounded.

\medskip
If $\Q$ is a minimal algebra of the first type with infinite representation type,  then there are only two possibilities: either its quadratic form is positive semidefinite (so $\Q$ is tame hereditary) or its quadratic form is indefinite (so $\Q$ is wild hereditary). 

\begin{lemma}\label{L: Omega min wild tipo 1}
If $\Q$ is a wild hereditary minimal algebra of the first type, then there is a centrally bounded principal ideal domain $\Gamma$ which is not $e$-discrete and an exact full and faithful functor $\Psi:\Gamma\g\mod\rightmap{}\Q\g\mod_{\cal I}$.   
\end{lemma}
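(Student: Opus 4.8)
The plan is to reduce the wild hereditary case to the tame case already settled in Proposition \ref{L: minimales gen mansas vs DIP's}, by exhibiting a suitable full and faithful exact functor from a tame hereditary minimal algebra into $\Q\g\mod$. First I would recall the standard fact (Dlab--Ringel theory, or \cite[VIII]{ars}) that a wild hereditary algebra $\Q$ of the first type, being the path algebra of a generalized Kronecker-type species with two vertices and a bimodule $M$ of dimension type $(\dim_GM,\dim M_F)$ strictly larger than any tame value, contains a full exact embedding of the module category of a tame hereditary algebra of the first type. Concretely, by choosing an appropriate sub-bimodule or by restricting to a convex subcategory, one finds inside $\Q\g\mod$ a copy of $\Q_0\g\mod$ for some minimal algebra $\Q_0$ of type 1 with positive semidefinite quadratic form; the inclusion functor is exact, full, faithful, and sends non-injectives to non-injectives (since it sends almost split sequences to almost split sequences, as in the argument at the end of the proof of Proposition \ref{L: minimales gen mansas vs DIP's}).

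Next I would apply Proposition \ref{L: minimales gen mansas vs DIP's} to $\Q_0$: since $\Q_0$ is a generically tame minimal algebra of infinite representation type of the first type, there is a centrally bounded principal ideal domain $\Gamma$ which is not $e$-discrete together with an exact full and faithful functor $\Psi_0:\Gamma\g\mod\rightmap{}\Q_0\g\mod_{\cal I}$. Composing $\Psi_0$ with the embedding $\Q_0\g\mod\hookrightarrow\Q\g\mod$ yields an exact full and faithful functor $\Psi:\Gamma\g\mod\rightmap{}\Q\g\mod$. The central boundedness and the failure of $e$-discreteness of $\Gamma$ are inherited directly from Proposition \ref{L: minimales gen mansas vs DIP's}; note that in the semidefinite case the relevant $\Gamma$ is indeed centrally bounded, as remarked after that proposition.

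It remains to check that $\Psi$ lands in $\Q\g\mod_{\cal I}$. This follows as in Proposition \ref{L: minimales gen mansas vs DIP's}: if $\Psi(N)=I\oplus C$ with $I$ indecomposable injective, then by \cite[(29.5)]{bsz} applied to the full and faithful exact functor $\Psi$ there is an indecomposable direct summand $I'$ of $N$ with $\Psi(I')\cong I$; since $I'$ sits at the left end of an almost split sequence $\xi$ in $\Gamma\g\mod$ (by the description in \cite[(6.5)]{bps}), the image $\Psi(\xi)$ is a non-split exact sequence in $\Q\g\mod$ ending at the injective $I$, hence splits — forcing $\xi$ to split, a contradiction. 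Therefore $\Psi$ restricts to the desired functor $\Psi:\Gamma\g\mod\rightmap{}\Q\g\mod_{\cal I}$.

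The main obstacle I expect is the precise construction and verification of the embedding $\Q_0\g\mod\hookrightarrow\Q\g\mod$ from a tame hereditary minimal algebra into the wild one, together with the bookkeeping that this embedding is exact, full, faithful, and preserves non-injectivity; the arithmetic of the bimodule dimension types $(\dim_GM,\dim M_F)$ in the classification of type-1 minimal algebras must be handled carefully to guarantee that a tame convex subcategory with a simple bimodule of the right dimension type actually exists inside every wild one.
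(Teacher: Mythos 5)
The key step of your argument --- producing a tame hereditary minimal algebra $\Q_0$ of the first type whose module category embeds fully, faithfully and exactly into $\Q\g\mod$ ``by choosing an appropriate sub-bimodule or by restricting to a convex subcategory'' --- is a genuine gap: no such elementary embedding exists. A minimal algebra of the first type is $T_{D_1\times D_2}(V)$ with $V$ a \emph{simple} $D_1\g D_2$-bimodule, so $V$ has no proper nonzero sub-bimodules from which to carve out a tame sub-species; and the underlying species has only two vertices, so the only proper convex subcategories are the two division-algebra corners, which are of finite representation type. There is therefore no combinatorial way to pass from the wild dimension pair $(\dim_GM,\dim M_F)$ to a tame one inside the same algebra. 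Any full exact embedding of a tame hereditary module category into $\Q\g\mod$ must come from the wildness of $\Q$ itself, i.e. from precisely the kind of representation-embedding theorem your reduction is trying to avoid, so the argument is circular at its crucial point.

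The paper's proof takes the direct route: by Crawley-Boevey \cite[(8.2) and (8.4)]{CB3}, the wild hereditary algebra $\Q$ admits a finite field extension $K$ of $k$ and a $\Q\g K\langle x,y\rangle$-bimodule $T$, finitely generated projective on the right, such that $T\otimes_{K\langle x,y\rangle}-$ is full, faithful and exact; restricting along the projection $K\langle x,y\rangle\rightmap{}K[x]$ yields $\Psi$ with $\Gamma=K[x]$, which is commutative (hence centrally bounded) and not $e$-discrete by (\ref{R: Gama=D[x,s] no e-discreto}). The final verification that the image avoids injective summands is the almost-split-sequence argument you correctly reproduce from (\ref{L: minimales gen mansas vs DIP's}). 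Note also that even if your intermediate embedding existed, routing through (\ref{L: minimales gen mansas vs DIP's}) would hand you whichever $\Gamma$ the tame case produces (e.g.\ a Dlab--Ringel hereditary order), making the required \emph{central} boundedness an extra thing to check, whereas the paper's choice $\Gamma=K[x]$ settles it immediately.
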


\begin{proof} From \cite[(8.2) and (8.4)]{CB3}, there is a finite field extension  $K$ of $k$ and a  $\Q\g K\langle  x, y\rangle$-bimodule $T$ which is finitely generated projective over $K\langle x,y\rangle$ and such
that the tensor product functor 
$$F:=T\otimes_{K\langle x, y\rangle}-: K\langle x, y\rangle\g\Mod \rightmap{}\Q\g\Mod$$ is full and faithful. Let $\phi:K\langle x,y\rangle\rightmap{}K[x]$ be the canonical projection and consider the  corresponding restriction functor $H:K[x]\g\Mod\rightmap{}K\langle x,y\rangle\g\Mod$. We have that $H\cong K[x]\otimes_{K[x]}-$, where the $K\langle x,y\rangle\g K[x]$-bimodule $K[x]$ is a left $K\langle x,y\rangle$-module by restriction through $\phi$. 
Then, the composition $FH:K[x]\g\Mod\rightmap{}\Q\g\Mod$ is exact, full and faithful, and can be realized as a tensor product by a bimodule which is finitely generated projective by the right.

Since $T$ is finitely generated by the right, the functor $F$ restricts to the subcategories of finite-dimensional modules, and so does the functor $H$, and their composition $FH:K[x]\g\mod\rightmap{}\Q\g\mod$. Take the principal ideal domain $\Gamma:=K[x]$ and notice, as in the proof of the last lemma,  that $\Im FH$ is contained in $\Q\g\mod_{\cal I}$, so $\Psi:=FH$ restricts to the desired exact full and faithful functor. Here $\Gamma$ is not $e$-discrete, for instance because it 
is particular case of (\ref{R: Gama=D[x,s] no e-discreto}). 
\end{proof}

\begin{lemma}\label{L: Omega min tipo 2} Assume that $\Q=T_D(V)$ is a minimal algebra of the second type. Then, there is a centrally bounded principal ideal domain $\Gamma$ which is not $e$-discrete and a full and faithful exact functor $\Psi:\Gamma\g\mod\rightmap{}\Q\g\mod$. 
\end{lemma}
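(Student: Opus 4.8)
The plan is to realize $\Q=T_D(V)$ as (essentially) a skew polynomial ring and invoke the machinery already set up. Recall from Definition \ref{D: minimal algebra}(2) that $\Q=T_D(V)$ with $D$ a finite-dimensional division $k$-algebra and $V$ a simple $D\g D$-bimodule. Since $k$ is perfect (hence every finite-dimensional division $k$-algebra is separable over its center, and simple bimodules over division algebras are one-dimensional on one side), one knows — this is exactly the normalization used in \cite[(6.2)]{bps} cited in the proof of (\ref{L: minimales gen mansas vs DIP's}) — that $\Q\cong D[x,s]$ for some $k$-algebra automorphism $s:D\rightmap{}D$. First I would record this identification. Then $\Gamma:=D[x,s]$ is itself a bounded principal ideal domain by \cite[(3.15)]{J}, and in fact it is \emph{centrally bounded}, since the center of $D[x,s]$ contains a polynomial subring over which $D[x,s]$ is finitely generated (this is the point noted in the remark after (\ref{L: minimales gen mansas vs DIP's})).

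With $\Gamma=\Q$ identified in this way, the functor $\Psi$ is simply the identity functor $\mathrm{Id}:\Gamma\g\mod\rightmap{}\Q\g\mod$, which is trivially exact, full and faithful. It remains to check that $\Gamma$ is not $e$-discrete, and this is precisely the content of (\ref{R: Gama=D[x,s] no e-discreto}) — the same citation already used for the type-2 case inside the proof of (\ref{L: minimales gen mansas vs DIP's}). So essentially nothing new is needed beyond unwinding the definitions: the work has already been done, and this lemma is the ``clean'' case. I would therefore write the proof as: apply \cite[(6.2)]{bps} to reduce to $\Q=D[x,s]$, cite \cite[(3.15)]{J} for the bounded-PID property, cite (\ref{R: Gama=D[x,s] no e-discreto}) for non-$e$-discreteness, and take $\Psi=\mathrm{Id}$; note additionally that $\Gamma$ is centrally bounded.

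The one genuine subtlety — and the thing I'd want to state carefully rather than wave at — is the reduction $T_D(V)\cong D[x,s]$. A priori $V$ is just a simple $D\g D$-bimodule, and such a bimodule need not be ``twist-by-an-automorphism'' in general; it is here because $D$ is finite-dimensional over a perfect field, so $D$ is separable, $D\otimes_k D^{\mathrm{op}}$ is semisimple, and the simple $D\g D$-bimodules are all of the form $\,_1D_\sigma$ for automorphisms $\sigma$ of $D$ fixing $k$ — equivalently, $V$ is one-dimensional over $D$ on each side and the left and right actions differ by an automorphism. Then $T_D(V)$, being the tensor algebra on a rank-one free module twisted by $s$, is by construction the Ore extension $D[x;s]$ with zero derivation. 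Since (\ref{L: minimales gen mansas vs DIP's}) already invoked exactly this step via \cite[(6.2)]{bps}, I would just cite it; there is no real obstacle, only bookkeeping. The proof is short:

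\begin{proof}
Since $k$ is perfect and $D$ is a finite-dimensional division $k$-algebra, by \cite[(6.2)]{bps} we may assume $\Q=D[x,s]$, the skew polynomial algebra with coefficients in $D$ for some $k$-algebra automorphism $s:D\rightmap{}D$. By \cite[(3.15)]{J}, $\Gamma:=D[x,s]$ is a bounded principal ideal domain; moreover it is centrally bounded, since $\Gamma$ is finitely generated as a module over a polynomial subring of its center. Taking as $\Psi$ the identity functor $\Gamma\g\mod\rightmap{}\Q\g\mod$, which is exact, full and faithful, it remains only to observe that $\Gamma$ is not $e$-discrete, which follows from (\ref{R: Gama=D[x,s] no e-discreto}).
\end{proof}
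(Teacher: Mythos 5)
Your proof covers only half of the lemma. The key assertion you rely on --- that over a perfect field every simple $D\g D$-bimodule $V$ is of the form $\,_1D_\sigma$ for an automorphism $\sigma$, so that $T_D(V)\cong D[x,s]$ --- is false in general. Semisimplicity of $D\otimes_kD^{op}$ does not force its simple modules to be one-dimensional over $D$. For instance, with $k=\hueca{Q}$ and $D=F=\hueca{Q}(\sqrt[3]{2})$ one has $F\otimes_kF\cong F\times L$ with $L$ a field extension of $F$ of degree $2$, so $L$ is a simple $F\g F$-bimodule with $\dim_FL=2$. The reduction to $D[x,s]$ in \cite[(6.2)]{bps}, which you cite, is carried out under the hypothesis that $\Q$ is \emph{generically tame}; the present lemma has no such hypothesis, and that is precisely why it cannot be absorbed into (\ref{L: minimales gen mansas vs DIP's}).

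Concretely, the paper splits into two cases. When $\dim_DV=1$ your argument is exactly the paper's: $\Q\cong D[x,s]$, take $\Gamma=\Q$ and $\Psi=\mathrm{Id}$, with central boundedness from \cite[(6.4)]{bps} and non-$e$-discreteness from (\ref{R: Gama=D[x,s] no e-discreto}). When $\dim_DV\geq 2$ the algebra $T_D(V)$ is wild hereditary and is not a principal ideal domain at all (already $k\langle x,y\rangle$ is not), so taking $\Gamma=\Q$ fails outright; instead one uses \cite[(8.2) and (8.5)]{CB3} to produce a finite field extension $K$ of $k$ and a full and faithful exact functor $K\langle x,y\rangle\g\Mod\rightmap{}\Q\g\Mod$ given by a bimodule free of finite rank on the right, and then restricts along $K\langle x,y\rangle\rightmap{}K[x]$ as in (\ref{L: Omega min wild tipo 1}), taking $\Gamma=K[x]$. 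Your write-up needs this second case added; as it stands there is a genuine gap, not merely missing bookkeeping.
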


\begin{proof} We just follow the proof of \cite[(6.2)]{bps}. There are two cases.
\medskip

\emph{Case 1:  $\dim_DV\geq 2$.} Here, by \cite[(8.2) and (8.5)]{CB3}, there is a finite field extension $K$ of $k$ and a $\Q\g K\langle x,y\rangle$-bimodule $Z$, which is free of finite rank over $K\langle x,y\rangle$ such that the tensor product $Z\otimes_{ K\langle x,y\rangle}-: K\langle x,y\rangle\g\Mod\rightmap{}\Q\g\Mod$ is full and faithful, and we proceed as in the proof of (\ref{L: Omega min wild tipo 1}) with $\Gamma=K[x]$, which is not $e$-discrete. 
\medskip

\emph{Case 2: $\dim_DV=1$.} Here, we know as mentioned in the proof of \cite[(6.2)]{bps}, that $\Q$ is a skew polynomial algebra $D[x,s]$, for some automorphism $s:D\rightmap{}D$. This case was already considered in (\ref{L: minimales gen mansas vs DIP's}). We can take $\Gamma=\Q$, which is a centrally bounded  principal ideal domain by \cite[(6.4)]{bps}.  
\end{proof}

\noindent{\bf Proof of Theorem (\ref{main thm for algs with PID's}):} 
By (\ref{main thm for algs with minimal's}), there is a minimal algebra $\Q$  of infinite representation type and a  functor  $H$ of one of the following types:
\begin{enumerate}
\item $H:\Q\g\mod_{\cal I}\rightmap{}\Lambda\g\mod$, when $\Q$ is a minimal algebra of the first type, or  
\item $H:\Q\g\mod\rightmap{}\Lambda\g\mod$, when $\Q$ is of the second type. 
\end{enumerate}
Here the functor $H$ is of the form $Z\otimes_\Q-$, for some $\Lambda\g \Q$-bimodule $Z$, which is finitely generated by the right.  

There are a few possibilities for $\Q$, considered in the preceding lemmas (\ref{L: minimales gen mansas vs DIP's}), (\ref{L: Omega min tipo 2}), and  (\ref{L: Omega min wild tipo 1}). In each case, there is a bounded principal ideal domain $\Gamma$, which is not $e$-discrete, and a functor $\Psi$ which is exact full and faithful, and we consider its composition $G:=H\Psi:\Gamma\g\mod\rightmap{}\Lambda\g\mod$. The functor  $G$ controls endolength of indecomposables because $\Psi$ and $H$ do so, by   (\ref{L: endols y repembed fieles y plenos}) and (\ref{main thm for algs with minimal's}). 
\hfill$\square$

\section{The representation embedding}

In this section we derive our main result (\ref{main exact thm for algs with PID's}), from the preceding work. For this, we consider  Ore localizations $S^{-1}\Gamma$, for some special denominator subset $S$ of a principal ideal domain $\Gamma$, see \cite[\S3.1]{Row}. 

\begin{remark}\label{R: sobre DIPs y DFUs} Recall that any principal ideal domain $\Gamma$ is a unique factorization domain in the following sense. Any non-zero non-unit element $a\in \Gamma$ is a product $a=p_1\cdots p_s$ where $p_1,\ldots,p_s$ are atoms of $\Gamma$ and for any other such factorization  $a=q_1\cdots q_t$ as product of atoms, we have $s=t$  and there is a permutation $\sigma\in S_t$ such that $q_i$ is similar to $p_{\sigma(i)}$, for each $i$. 
\end{remark}

\begin{lemma}\label{L: la restricción de localizar se porta bien}
Let $\Gamma$ be a bounded principal ideal domain and let $b \in \Gamma$  be  such that $ \Gamma b= b\Gamma \not=\Gamma$. Consider the denominator set $S = \{1, b, b^2, \ldots\}$. We have the epimorphism of rings 
$\nu : \Gamma \rightmap{}S^{-1}\Gamma$ 
and the corresponding restriction functor  $F : S^{-1}\Gamma\g \mod \rightmap{} \Gamma\g\mod$. Let $p$ be an atom of $\Gamma$ which is not similar to any of the atomic factors of $b$, then 
every  indecomposable $\Gamma$-module $M$ 
with composition factors isomorphic to  $\Gamma/\Gamma p$ is of the form $M\cong F(N)$, for some $N\in S^{-1}\Gamma\g\mod$.  
\end{lemma}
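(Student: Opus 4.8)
The plan is to exploit the standard universal property of Ore localization: a left $\Gamma$-module $M$ lies in the image of the restriction functor $F$ (equivalently, becomes an $S^{-1}\Gamma$-module) if and only if multiplication by $b$ acts invertibly on $M$. Since $\Gamma b = b\Gamma$ and $b\neq 0$ in the domain $\Gamma$, the set $S=\{1,b,b^2,\dots\}$ is a (two-sided) denominator set, so $S^{-1}\Gamma$ exists and $\nu:\Gamma\to S^{-1}\Gamma$ is a ring epimorphism whose restriction functor $F$ is fully faithful with essential image exactly the $\Gamma$-modules on which $b$ acts bijectively. So the whole statement reduces to the claim: \emph{if $M$ is a finite-dimensional indecomposable $\Gamma$-module all of whose composition factors are isomorphic to $\Gamma/\Gamma p$, where $p$ is an atom not similar to any atomic factor of $b$, then $b\cdot(-):M\to M$ is bijective.}

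To prove that claim I would argue as follows. First, by (\ref{L: Gama-mod es prod de uniseriales con simple}), $M$ is isomorphic to one of the uniserial modules $E_n^p$ in the uniserial subcategory ${\cal U}_p$; in particular $M$ has a unique composition series, with every factor $\cong S_p:=\Gamma/\Gamma p$, and $\supp$-type data show $\Hom_\Gamma(S_q, M)=0=\Hom_\Gamma(M,S_q)$ for every atom $q$ not similar to $p$. Now write $b=q_1\cdots q_s$ as a product of atoms (Remark \ref{R: sobre DIPs y DFUs}); by hypothesis none of the $q_i$ is similar to $p$. The map $b\cdot(-)$ on $M$ factors as the composite of the maps $q_i\cdot(-)$, so it suffices to show each $q_i\cdot(-):M\to M$ is bijective. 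Since $M$ is finite-dimensional, bijectivity is equivalent to injectivity; and $\Ker(q_i\cdot(-))$ is a submodule of $M$ annihilated by $q_i$, hence a module all of whose composition factors are $\cong S_{q_i}$ (as $\Gamma/\Gamma q_i$ is simple). But a submodule of $M$ has all composition factors $\cong S_p$, and $S_p\not\cong S_{q_i}$ because $p$ and $q_i$ are non-similar atoms — so that submodule must be $0$. Hence $q_i\cdot(-)$ is injective, therefore bijective, and so is $b\cdot(-)$.

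Having established that $b$ acts invertibly on $M$, the universal property of the Ore localization gives a (unique) $S^{-1}\Gamma$-module structure $N$ on the underlying abelian group of $M$ extending the $\Gamma$-action, i.e. $M\cong F(N)$ as $\Gamma$-modules; and $N$ is finite-dimensional over $k$ since it has the same underlying $k$-space as $M$, so $N\in S^{-1}\Gamma\g\mod$, as required. Two small points need a line each: that $S$ is genuinely a denominator set (immediate from $\Gamma b=b\Gamma$ together with $\Gamma$ being a domain, so $S$ consists of non-zero-divisors and the Ore conditions hold trivially), and that ``$b$ acts invertibly on $M$'' is exactly the condition for $M$ to extend to an $S^{-1}\Gamma$-module — this is the elementary module-theoretic form of the localization universal property and can be cited from \cite[\S3.1]{Row}.

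The main obstacle, and the only genuinely nontrivial step, is the bijectivity of $q_i\cdot(-)$ on $M$: everything hinges on identifying $\Ker(q_i\cdot(-))$ as a submodule with composition factors $\cong S_{q_i}$ and then using the incompatibility of the two atoms $p$ and $q_i$ — here one must be careful that ``composition factors of a $\Gamma/\Gamma q_i$-module are copies of the simple $\Gamma/\Gamma q_i$'' really does follow from $q_i$ being an atom, and that non-similar atoms give non-isomorphic simples, which is the content of the $\Hom$-vanishing already recorded in the proof of \cite[(6.5)]{bps} and used in (\ref{L: Gama-mod es prod de uniseriales con simple}).
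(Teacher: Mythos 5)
Your strategy coincides with the paper's at both ends: reduce the lemma to showing that multiplication by $b$ is bijective on $M$, then invoke the universal property of the Ore localization to endow the underlying space of $M$ with an $S^{-1}\Gamma$-module structure. The middle step is where you diverge. The paper proves bijectivity of $\mu_b:=b\cdot(-)$ by induction on $\ell_\Gamma(M)$: in the base case $bm=0$ would force $b\in\Gamma p$, contradicting (\ref{R: sobre DIPs y DFUs}), so $bM=\Gamma bm=M$; the inductive step applies the five lemma to the commutative diagram built on the exact sequence $0\to M_1\to M\to M_{l-1}\to 0$ coming from (\ref{L: Gama-mod es prod de uniseriales con simple}). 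You instead factor $b=q_1\cdots q_s$ into atoms and show each $q_i\cdot(-)$ is injective by a composition-factor incompatibility. Both routes lean on the uniserial structure of $M$; yours trades the induction for the unique factorization of $b$ and the fact that non-similar atoms give non-isomorphic simples (which is indeed what similarity means).

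One step is not correct as written: you assert that $\Ker(q_i\cdot(-))$ is a $\Gamma$-submodule of $M$. For a left module over a noncommutative ring, the kernel of left multiplication by an element $q$ is a submodule only when $q\Gamma\subseteq\Gamma q$; here only $b$ is assumed invariant, not its individual atomic factors, so $\{m\in M: q_im=0\}$ is a priori only a $k$-subspace, and the phrase ``annihilated by $q_i$'' has no two-sided meaning. The repair is immediate and stays inside your framework: if $q_im=0$ for some $m\neq 0$, then $q_i\in\Ann_\Gamma(m)$, so $\Gamma q_i\subseteq\Ann_\Gamma(m)$; since $q_i$ is an atom, $\Gamma q_i$ is a maximal left ideal, hence $\Gamma m\cong\Gamma/\Gamma q_i$ is a simple submodule of $M$ not isomorphic to $S_p$, contradicting the fact that every simple submodule of $M$ is isomorphic to $S_p$. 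With that substitution your argument is complete and yields the same conclusion as the paper's induction.
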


\begin{proof} Let $M\in \Gamma\g\mod$ be indecomposable with all composition factors isomorphic to $\Gamma/\Gamma p$.
We will show by induction on the length of $M$ that the linear map $\mu_b:M\rightmap{}M$ given by multiplication by $b$ is a linear isomorphism. This is enough, because we know that the structure of $M$ as a $\Gamma$-module is given by the morphism of $k$-algebras $\mu:\Gamma\rightmap{}\End_k(M)$, mapping each $a\in \Gamma$ onto the linear map $\mu_a$, given by multiplication by $a$. If $\mu_b$ is invertible, then $\mu_s$ is invertible for each $s\in S$. Thus by the universal property of localizations, we obtain a map $\hat{\mu}:S^{-1}\Gamma\rightmap{}\End_k(M)$, such that $\hat{\mu}\nu=\mu$, that is a structure of $S^{-1}\Gamma$-module $N$ with underlying vector space $M$. Thus, $F(N)= M$.

If we assume that  $M$ is simple, then $M = \Gamma m$ with $m\in M$ and, if $bm=0$, we would have $b\in\Gamma p$, which is not the case by (\ref{R: sobre DIPs y DFUs}).   Hence, 
$M=\Gamma m = \Gamma bm = b\Gamma m$. Then, the linear map $\mu_b: M\rightmap{}M$ given 
by multiplication by $b$ is surjective. Since $M$ is finite-dimensional, we obtain that $\mu_b$ is a linear isomorphism.   Then, our claim holds for indecomposable   $\Gamma$-modules of length 1. 

 Now, assume that the indecomposable module $M$ has length $l> 1$ and consider its composition series
$0 = M_0 \subset M_1  \subset  \cdots \subset M_l = M$ in $\Gamma\g\mod$.
By induction hypothesis, multiplication by $b$ determines linear automorphisms on each $M_i$, with $i<l$. From the proof of (\ref{L: Gama-mod es prod de uniseriales con simple}), we get an exact sequence of $\Gamma$-modules
$0 \rightmap{} M_1 \rightmap{} M \rightmap{} M_{l-1} \rightmap{} 0,$ where $M_{l-1}$ is indecomposable. 
Then, we have the following commutative diagram of linear maps
$$\begin{matrix}
0 &\rightmap{}& M_1& \rightmap{}& M& \rightmap{}& M_{l-1}& \rightmap{}& 0\\
&&\shortlmapdown{}&&\shortlmapdown{}&&\shortlmapdown{}&&\\
0 &\rightmap{}& M_1& \rightmap{}& M& \rightmap{}& M_{l-1}& \rightmap{}& 0,\\
\end{matrix}$$
where the vertical arrows are given by multiplication by $b$. 
By induction hypothesis, the left and right vertical arrows are linear isomorphisms, then so is the central one. 
\end{proof}

\medskip
\noindent{\bf Proof of theorem (\ref{main exact thm for algs with PID's}):} By (\ref{main thm for algs with PID's}),  there is a  bounded principal ideal domain $\Gamma$ which is not $e$-discrete and a functor  
 $$G:\Gamma\g\mod\rightmap{}\Lambda\g\mod,$$ 
 which preserves indecomposables, reflects isomorphism classes, and controls endolength.  The functor $G$ is of the form $Z\otimes_\Gamma-$, for some $\Lambda\g \Gamma$-bimodule $Z$ which is finitely generated by the right. 

The right $\Gamma$-module $Z$ decomposes as a direct sum $Z=T\oplus P$, where $T$ is a torsion finitely generated $\Gamma$-module and  $P$ is a free $\Gamma$-module of finite rank, see \cite[(1.4.4)]{Cohn2}. If $T=0$, we are done.
 
Assume that $T\not=0$, then  $T$ decomposes by \cite[(1.5.5)]{Cohn1} as a finite direct sum of indecomposables of the form $E_i^p$. Then, the annihilator of $T$  is a proper ideal of $\Gamma$ which is of the form $b\Gamma =\Gamma b$, for some non-unit element $b\in \Gamma$.   

We know that each $E_i^p$ has annihilator $q_p^i\Gamma$, where $q_p\Gamma$ is the annihilator of $E_1^p$. Then, $T$ is annihilated by the product of the elements $q_p^i$, corresponding to the indecomposables $E_i^p$ appearing in the decomposition of $T$. Thus,  $b\not=0$.    

Now, we consider the multiplicative subset $S=\{1,b,b^2,\ldots\}$ of $\Gamma$, which is a right denominator set and a left denominator set of $\Gamma$. 
The Ore localization $S^{-1}\Gamma\cong \Gamma S^{-1}$ is a bounded principal ideal domain.   The epimorphism  $\nu:\Gamma\rightmap{}S^{-1}\Gamma$ induces the full and faithful restriction functor $F:S^{-1}\Gamma\g\mod\rightmap{}\Gamma\g\mod$ which is isomorphic to $S^{-1}\Gamma\otimes-$.  Thus, the composition $GF:S^{-1}\Gamma\g\mod\rightmap{}\Lambda\g\mod$ preserves indecomposables,  reflects isomorphism classes, and is isomorphic to $Z\otimes_\Gamma S^{-1}\Gamma\otimes-$.  We claim that $Z\otimes_\Gamma S^{-1}\Gamma$ is a free right $S^{-1}\Gamma$-module. For this, it will be enough to show that $T\otimes_\Gamma S^{-1}\Gamma=0$.  Since $Tb=0$, and $b$ is invertible in $S^{-1}\Gamma$, we have $0=Tb\otimes_\Gamma S^{-1}\Gamma=T\otimes_\Gamma bS^{-1}\Gamma=T\otimes_\Gamma S^{-1}\Gamma$. 

 The principal ideal domains $S^{-1}\Gamma$ appearing before 
are not $e$-discrete. Indeed,  
 the atoms of $S^{-1}\Gamma$ correspond to those non-similar to the atomic factors of $b$.   
 Notice that, from  (\ref{L: la restricción de localizar se porta bien}), we obtain that  the restriction functor $F:S^{-1}\Gamma\g\mod\rightmap{}\Gamma\g\mod$ identifies the category $S^{-1}\Gamma\g\mod$ with the full  subcategory $\Gamma\g\mod_{\cal Z}$ of $\Gamma\g\mod$ formed by all the $\Gamma$-modules without direct summands isomorphic to  
$E_i^p$, with $i\in \hueca{N}$ and  
$p$ an atomic factor of $b$. Thus, since $\Gamma$ is not $e$-discrete, neither is $S^{-1}\Gamma$.   
\hfill$\square$

\begin{remark}\label{R: el caso de los reales}
In the case where  the ground field is the field $\hueca{R}$ of real numbers it is possible to make more explicit statements. In order to state them, we fix some notation: we consider the 
field $\hueca{C}$ of complex numbers and  the real quaternions $\hueca{H}$;  the skew polynomial algebras $\hueca{R}[x]$, $\hueca{C}[x]$, $\hueca{H}[x]$, and $\hueca{C}[x, \sigma]$, where $\sigma$ is the
complex conjugation; and the principal ideal domain $\hueca{D} = \hueca{R}[x, y]/\langle y^2 + x^2 + 1\rangle$. 

Let us describe the $\hueca{R}$-algebras which have a leading role in our arguments. This description relies heavily on the important work by Dlab and Ringel  on hereditary algebras, we  cite \cite{bps-reals}, but refer the reader to its bibliography. 
In this case, up to isomorphism, the generically tame 
finite-dimensional minimal algebras of infinite-representation type are 
$\begin{pmatrix}\hueca{R}&0\\
\hueca{H}&\hueca{H}\\ \end{pmatrix}$ and 
$\begin{pmatrix}\hueca{H}&0\\
\hueca{H}&\hueca{R} \\ \end{pmatrix}$, see \cite[(9.5)]{bps-reals}. The skew polynomial $\hueca{R}$-algebras, 
are, up to isomorphism, 
the following 
$\hueca{R}[x]$, $\hueca{C}[x]$, $\hueca{H}[x]$, and $\hueca{C}[x, \sigma]$. These are the minimal algebras $Q$  which may appear in (\ref{T: reduccion a minimales}) or (\ref{main thm for algs with minimal's}). The bounded  principal ideal domains that appear in theorem (\ref{main thm for algs with PID's}) are  $\hueca{R}[x]$, $\hueca{C}[x]$, $\hueca{H}[x]$, $\hueca{C}[x, \sigma]$, and 
 $\hueca{D}$.  Here, we can see \cite[(9.6)]{bps-reals} as an illustration of (\ref{L: minimales gen mansas vs DIP's}). The bounded principal ideal domains in (\ref{main exact thm for algs with PID's}),  are Ore localizations of the preceding ones. 
 
The wild hereditary case, addressed in (\ref{L: Omega min wild tipo 1}) and (\ref{L: Omega min tipo 2}), yields only the principal ideal domains  $\hueca{R}[x]$ and $\hueca{C}[x]$. The algebra $\hueca{H}[x]$ may only appear in (\ref{main thm for algs with minimal's}) if $\hueca{H}$ is  one of the coefficient algebras of $\Lambda$, see the proof of (\ref{T: reduccion a minimales})Case 2.a. 
\end{remark}

The following proposition is probably known, but we could not find any reference. 

\begin{proposition}\label{P: ideales max de Z y classes de similaridad de atomos}
Let $\Gamma$ be a principal ideal domain which is not a field and such that its  center $Z$ is again a principal ideal domain. Then if the set of maximal ideals of $Z$ has cardinality $C$,  any complete set ${\cal P}$ of representatives of the non-similar atoms of $\Gamma$ has at least the same cardinality $C$.    
\end{proposition}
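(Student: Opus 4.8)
The plan is to produce an injection from the set of maximal ideals of $Z$ into the set of similarity classes of atoms of $\Gamma$, which immediately gives the cardinality bound. First I would fix a maximal ideal $\mathfrak{m}$ of $Z$. Since $Z$ is a principal ideal domain, $\mathfrak{m} = Zp$ for some prime element $p\in Z$. The element $p$ is a nonzero non-unit of $\Gamma$ (it cannot be a unit in $\Gamma$, since $p$ is central and a unit's inverse would then lie in $Z$, contradicting that $\mathfrak m$ is proper; and $\Gamma$ is not a field so non-units exist, but the relevant point is just $p\notin\Gamma^{\times}$). By Remark \ref{R: sobre DIPs y DFUs}, $p$ factors in $\Gamma$ as a product of atoms $p = \pi_1\cdots\pi_r$, and this factorization is unique up to order and similarity. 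I would send $\mathfrak m = Zp$ to the similarity class of $\pi_1$ — more precisely, I need to check this is well-defined and injective, and for that it is cleaner to send $\mathfrak m$ to the \emph{set} of similarity classes of the atomic factors of $p$, or to single out a canonical one.

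The key step is injectivity: if $p, p'$ are non-associate primes of $Z$, I must show that no atom dividing $p$ is similar in $\Gamma$ to an atom dividing $p'$. The tool is the following observation about central elements and similarity. If $\pi$ is an atom of $\Gamma$ dividing the central element $p$, then $\Gamma\pi$ contains $p$, hence $p$ annihilates the simple left module $\Gamma/\Gamma\pi$; so $p$ lies in the annihilator ideal of that simple module. Similar atoms give isomorphic simple modules (this is essentially the definition of similarity, or follows from \cite[(6.5)]{bps} as used in Lemma \ref{L: Gama-mod es prod de uniseriales con simple}), hence the same annihilator, hence the same set of central elements killing them. Now the annihilator of $\Gamma/\Gamma\pi$ intersected with $Z$ is a nonzero ideal of $Z$ — nonzero because $p$ is in it — and it is prime in $Z$: if $ab\in\Ann(\Gamma/\Gamma\pi)\cap Z$ with $a,b\in Z$, then $ab\cdot(\Gamma/\Gamma\pi)=0$, and since $a$ is central, $a$ acts as a $\Gamma$-endomorphism of the simple module $\Gamma/\Gamma\pi$, so by Schur's lemma $a$ acts invertibly or as zero; if $a$ acts invertibly then $b$ acts as zero, i.e. $b\in\Ann$, and symmetrically. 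So $\Ann(\Gamma/\Gamma\pi)\cap Z$ is a nonzero prime ideal of $Z$ containing $p$; since $Z$ is a PID, nonzero primes are maximal, so this ideal equals $Zp = \mathfrak m$. Thus the similarity class of $\pi$ determines $\mathfrak m$ uniquely, which is exactly the injectivity I need: the map (similarity class of an atom $\pi$) $\mapsto$ ($\Ann(\Gamma/\Gamma\pi)\cap Z$, when this is nonzero) is a well-defined surjection from a subset of $\cal P$ onto the maximal ideals of $Z$, since every maximal ideal $\mathfrak m = Zp$ is hit by any atomic factor of $p$.

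To finish, I would remark that the correspondence above gives a surjection from a subset of ${\cal P}$ onto $\mathrm{Max}(Z)$, so $|{\cal P}| \geq |\mathrm{Max}(Z)| = C$. The main obstacle is the verification that $\Ann(\Gamma/\Gamma\pi)\cap Z$ is nonzero for \emph{every} atom $\pi$ — equivalently, that every atom of $\Gamma$ divides some prime of $Z$; but this is automatic from boundedness only if we assume it, so the cleaner route is the one above, restricting the map to the (nonempty, and for our purposes sufficient) set of atoms that do divide a central prime, since each maximal ideal of $Z$ supplies at least one such atom via its factorization in $\Gamma$. The one genuinely delicate point to get right is the Schur-lemma argument showing primality of $\Ann\cap Z$, and ensuring that "similar atoms have the same annihilator" is invoked correctly from the cited structure theory; everything else is bookkeeping with the PID structure of $Z$.
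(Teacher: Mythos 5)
Your argument is correct, and it reaches the conclusion by a more direct route than the paper. The paper factors the desired injection as ${\cal M}(Z)\rightarrow{\cal M}(\Gamma)\rightarrow{\cal P}$: a generator $z$ of a maximal ideal of $Z$ is factored into bilateral elements $z=a_1\cdots a_s$ with $\Gamma a_s$ a maximal two-sided ideal of $\Gamma$; one then quotes Jacobson's result that all atomic factors of $a_s$ are similar, identifies $\Gamma a_s$ with $\Ann_\Gamma(\Gamma/\Gamma p)$ for such a factor $p$, and proves injectivity at both stages by comaximality ($1=cz+c'z'$). You skip the intermediate maximal ideals of $\Gamma$ entirely and attach to each central prime $p$ the similarity class of an atomic factor $\pi$, recovering $Zp$ as $\Ann_\Gamma(\Gamma/\Gamma\pi)\cap Z$; your Schur-lemma argument that this intersection is a nonzero prime (hence maximal) ideal of $Z$ replaces both the appeal to Jacobson and the explicit comaximality computation. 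What your version buys is brevity and self-containedness; what the paper's version buys is the extra structural output --- each atom in the image arises as an atomic factor of a generator of a maximal two-sided ideal of $\Gamma$ --- which is reused verbatim in Remark (\ref{R: Gama=D[x,s] no e-discreto}).

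Two small points need tightening. First, the assertion that $p\in\Gamma\pi$ for \emph{every} atomic factor $\pi$ of $p$ is not automatic from a factorization $p=\pi_1\cdots\pi_r$; it holds because $p$ is bilateral ($\Gamma p=p\Gamma$), so every factor is also a right divisor --- precisely the observation the paper establishes in the first paragraph of its proof. Alternatively you may always choose $\pi=\pi_r$, for which $p\in\Gamma\pi_r$ is immediate; since you only need one atom per maximal ideal, either fix suffices. Second, if $Z$ happens to be a field its unique maximal ideal is $(0)$, which has no prime generator, so that case should be disposed of separately (as the paper does): there $C=1$ and ${\cal P}\neq\emptyset$ because $\Gamma$ is not a field.
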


\begin{proof} First notice that any non-trivial bilateral element $b \in \Gamma$ has the property that any  factor in a factorization of $b$ in $\Gamma$ is a left divisor and a right divisor of $b$. Indeed, if $b=ac$ in $\Gamma$, we have $bc=c'b$, for some $c'\in \Gamma$, thus 
$ac^2 = bc = c'b = c'ac$, and therefore, $b = ac = c'a$. Thus, any left divisor $a$ of $b$ is a right divisor of $b$. 
Similarly, one shows that any right divisor of $b$ is also a left divisor of $b$. 
Now, assume that $b = apc$, a factorization in $\Gamma$. As before, we obtain $ b = (pc) a'$ and $b = c'(ap)$, for some $a',c'\in \Gamma$, as claimed.  

Now, consider the collection ${\cal M}(\Gamma)$ of maximal (two sided) ideals of $\Gamma$ and let us exhibit an injective map $\alpha:{\cal M}(\Gamma)\rightmap{}{\cal P}$. Given $\Gamma b\in {\cal M}(\Gamma)$,   
by \cite[(1.2.19)]{J}, the atomic factors of $b$ belong to the same similarity class. We can assume that some atomic factor $p$ of $b$ belongs to ${\cal P}$ and set $\alpha(\Gamma b)=p$. 
As remarked before, $p$ is a right factor of $b$, so $\Gamma b\subseteq \Ann_\Gamma(\Gamma/\Gamma p)$. Since $\Gamma/\Gamma p\not=0$, we get $\Gamma b= \Ann_\Gamma(\Gamma/\Gamma p)$. 

Assume that  $\Gamma b'$ is a maximal ideal of $\Gamma$ different from $\Gamma b$ and $p'$ is an atomic  factor of $b'$ with $p$ similar to $p'$. Thus $\Gamma/\Gamma p\cong \Gamma/\Gamma p'$, and their annihilators coincide, which is not possible. So, $\alpha$   is an injective map. 

It remains to show that there is an injection $\beta:{\cal M}(Z)\rightmap{}{\cal M}(\Gamma)$, where ${\cal M}(Z)$ denotes the collection of maximal ideals of $Z$. We may assume that the principal ideal domain $Z$ is not a field. 
Assume that $Zz$ is a maximal ideal of $Z$. Then, $z$ has a factorization $z=a_1a_2\cdots a_s$, where $a_1,\ldots,a_s\in \Gamma$ are bilateral elements and  $\Gamma a_s$ is a maximal ideal of $\Gamma$. We choose such factor $a_s$ of $z$ and define $\beta(Zz)=\Gamma a_s$. Now, assume that $Zz'$ is another maximal ideal of $Z$ with $Zz'\not=Zz$, factorize $z'=a'_1a'_2\cdots a'_t$ as before and let us see that $\Gamma a_s\not=\Gamma a'_t$.  
If $\Gamma a_s=\Gamma a'_t$, then  $a_s=ua'_t$, for some unit $u\in \Gamma$. By the first paragraph of this proof, $a_s$ divides by the right $z$ and $z'$. But since $Zz+Zz'=Z$, we have $1=cz+c'z'$ which implies that $a_s$ divides $1$, a contradiction. 
\end{proof}

\begin{lemma}\label{L: cotas para endol de Gama-simples} Let $\Gamma$ be a principal ideal domain with center $Z$. Assume that $\Gamma$ is finitely generated as a $Z$-module, say by $m$ elements. Let $\Gamma b$ be a maximal ideal of $\Gamma$ and set $I=Z\cap \Gamma b$. Then, for any atom $p\in \Gamma$ dividing $b$, we have  
$$\Endol{\Gamma/\Gamma p}\leq m\times \ell_{Z}(Z/I),$$
where $\ell_Z$ denotes the length as a $Z$-module.  
\end{lemma}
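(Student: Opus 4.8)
The plan is to bound the endolength of the simple $\Gamma$-module $\Gamma/\Gamma p$ by relating it, via the central subring $Z$, to the length of $Z/I$ as a $Z$-module. The key identity I will exploit is the one from (\ref{L: dim vs endol}): for the indecomposable (in fact simple) module $M=\Gamma/\Gamma p$ over the algebra $\Gamma$, with $E=\End_\Gamma(M)^{\rm op}$ a division algebra $K_M$ (it is already division since $M$ is simple), we have $\Endol{M}=\ell_E(M)=1$ if we regard $M$ as an $E$-module — but that is the wrong module category; the endolength here is the length of $M$ as a right $E_M$-module where $E_M=\End_\Gamma(M)^{\rm op}$, and since $M$ is simple this length is $1$. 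So the content must really be about $\dim$-type counting: I will instead use $\Endol{M}=\dim_k M/\dim_k K_M$ from (\ref{L: dim vs endol}). However $\Gamma$ need not be finite-dimensional over $k$; what we do have is that $\Gamma$ is finitely generated over its center $Z$ by $m$ elements, and $Z$ is a PID. So the right framework is length over $Z$, not dimension over $k$.

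**First I would** observe that $p$ divides $b$, and $I=Z\cap\Gamma b\subseteq Z\cap \Gamma p$; since $\Gamma b$ is a maximal two-sided ideal, as in the proof of (\ref{P: ideales max de Z y classes de similaridad de atomos}) one has $\Gamma b=\Ann_\Gamma(\Gamma/\Gamma p)$, hence $I=Z\cap\Ann_\Gamma(\Gamma/\Gamma p)=\Ann_Z(\Gamma/\Gamma p)$. Therefore $\Gamma/\Gamma p$ is a module over $Z/I$, and in particular a $Z$-module of finite length annihilated by $I$. Since $\Gamma$ is generated as a $Z$-module by $m$ elements, there is a surjection of $Z$-modules $Z^m\twoheadrightarrow \Gamma$, hence a surjection $Z^m\twoheadrightarrow \Gamma/\Gamma p$; but this does not yet give the bound because $Z$ itself has infinite $Z$-length. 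Instead, reducing mod $I$, the module $\Gamma/\Gamma p$ is a quotient of $(Z/I)^m$ as a $Z$-module (or as a $Z/I$-module), so
$$\ell_Z(\Gamma/\Gamma p)\le m\times \ell_Z(Z/I).$$

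**Next I would** compare $\Endol{\Gamma/\Gamma p}$ with $\ell_Z(\Gamma/\Gamma p)$. Here I expect the main obstacle to lie: I need $\Endol{\Gamma/\Gamma p}\le \ell_Z(\Gamma/\Gamma p)$, i.e. that the $Z$-module length dominates the endolength. The point is that the endolength is the length of $\Gamma/\Gamma p$ as a module over $E_M=\End_\Gamma(\Gamma/\Gamma p)^{\rm op}$, and since $Z$ is central it acts on $\Gamma/\Gamma p$ through a map $Z\to E_M$; thus any $Z$-submodule filtration refines to an $E_M$-submodule filtration only if $E_M$-submodules are $Z$-submodules — which they are, since $Z$ is central, so $E_M$-submodules are automatically $Z$-submodules. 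Hence a composition series of $\Gamma/\Gamma p$ as an $E_M$-module is in particular a chain of $Z$-submodules, giving $\ell_{E_M}(\Gamma/\Gamma p)\le \ell_Z(\Gamma/\Gamma p)$, that is $\Endol{\Gamma/\Gamma p}\le \ell_Z(\Gamma/\Gamma p)$. Combining with the previous inequality yields
$$\Endol{\Gamma/\Gamma p}\le \ell_Z(\Gamma/\Gamma p)\le m\times \ell_Z(Z/I),$$
which is the claim. The only subtlety to check carefully is that the surjection $(Z/I)^m\twoheadrightarrow \Gamma/\Gamma p$ is one of $Z$-modules and that the generators of $\Gamma$ over $Z$ indeed descend to generators of $\Gamma/\Gamma p$ over $Z/I$, which is immediate; and that $I=\Ann_Z(\Gamma/\Gamma p)$ is genuinely the ideal $Z\cap\Gamma b$ rather than something larger, which follows from maximality of $\Gamma b$ exactly as in (\ref{P: ideales max de Z y classes de similaridad de atomos}).
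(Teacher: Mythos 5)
Your proof is correct, and it is essentially the paper's argument run more directly. Both rest on the same two inequalities: a $Z$-length bound obtained from a surjection of $(Z/I)^m$ onto the module in question, and the observation that right multiplication by central elements yields an algebra map $Z\rightmap{}\End_\Gamma(-)^{op}$ inducing the natural $Z$-action, so that every composition series over the endomorphism algebra is a chain of $Z$-submodules and hence $\Endol{-}\leq \ell_Z(-)$. The difference is the module to which these bounds are applied. The paper applies both to $\Gamma/\Gamma b$ and must then transfer the estimate to $\Gamma/\Gamma p$: it identifies the simple artinian ring $\Gamma/\Gamma b$ with a matrix ring $M_n(D)$, identifies $\Gamma/\Gamma p$ with the unique simple left module $S$, writes $\Gamma/\Gamma b\cong S^n$, and uses that the endolength of a direct summand is bounded by that of the sum. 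You apply both bounds directly to $\Gamma/\Gamma p$, which eliminates that structural detour entirely; the only extra point you must verify is that $I$ annihilates $\Gamma/\Gamma p$, and this holds because $p$ is a right divisor of the bilateral element $b$ (as in the proof of (\ref{P: ideales max de Z y classes de similaridad de atomos})), whence $I\Gamma=\Gamma I\subseteq \Gamma b\subseteq \Gamma p$. Note that for this only the inclusion $I\subseteq \Ann_Z(\Gamma/\Gamma p)$ is needed, not the equality. Your route is shorter and loses nothing.

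One caveat about your opening paragraph: the assertion that, for the simple $\Gamma$-module $M=\Gamma/\Gamma p$, the length of $M$ over $E_M=\End_\Gamma(M)^{op}$ equals $1$ is false. Simplicity over $\Gamma$ forces $E_M$ to be a division ring, but $\ell_{E_M}(M)=\dim_{E_M}M$ can be strictly larger than $1$; indeed, bounding exactly this quantity is the whole content of the lemma, and in the paper's proof it is bounded by the $n$ with $\Gamma/\Gamma b\cong M_n(D)$. You discard this line immediately and never use it, so the proof itself is unaffected, but the remark as stated is an error rather than merely ``the wrong module category.''
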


\begin{proof} There is a surjective morphism of $Z$-modules $Z^m\rightmap{}\Gamma$, which induces a surjective morphism $(Z/I)^m\rightmap{}\Gamma/\Gamma b$, so $\ell_Z(\Gamma/\Gamma b)\leq m\times \ell_Z(Z/I)$. 

The morphism of algebras $\rho: Z\rightmap{}\End_\Gamma(\Gamma/\Gamma b)^{op}$ which maps any $z$ onto the right multiplication by $z$ on $\Gamma/\Gamma b$ induces on $\Gamma/\Gamma b$ the natural action of $Z$ on $\Gamma/\Gamma b$. This implies that any composition series for the $\End_\Gamma(\Gamma/\Gamma b)^{op}$-module $\Gamma/\Gamma b$ is a $Z$-module filtration  of the $Z$-module $\Gamma/\Gamma b$. Therefore, we have  $\Endol{\Gamma/\Gamma b}\leq \ell_Z(\Gamma/\Gamma b)$. 

Since $\Gamma b$ is a maximal ideal of $\Gamma$, we get that $\Gamma/\Gamma b$ is a simple ring. As recalled before, all atomic factors of $b$ are similar. Thus $\Gamma/\Gamma b$ has a simple left ideal.   
Hence, $\Gamma/\Gamma b\cong M_n(D)$, for some $n\in \hueca{N}$ and  a finite-dimensional division $k$-algebra $D$. Moreover, we can take  $D=\End_\Gamma(S)^{op}$, where $S$ is, up to isomorphism, the unique simple left $\Gamma/\Gamma b$-module. Here,  $S=M_{1\times n}(D)$ is a simple left $M_n(D)$-module with the usual matrix multiplication. So there is an isomorphism $D^n\rightmap{}S$ of right $D$-modules.

 As before, we have $\Endol{\,_{\Gamma/\Gamma b} S}\leq \ell_D(S_D)= n$. Since $p$ is an atom, the left $\Gamma/\Gamma b$-module $\Gamma/\Gamma p$ is  simple and we have $\Gamma/\Gamma p\cong S$. We have $\Gamma/\Gamma b$ is isomorphic to $S^n$ and we know that for a finite direct sum, the endolength of the summands is always bounded by the endolength of the sum.   Then, 
$$\Endol{\Gamma/\Gamma p}=\Endol{S}
\leq \Endol{\Gamma/\Gamma b}
\leq \ell_Z(\Gamma/\Gamma b)
 \leq m\times \ell_Z(Z/I).$$
\end{proof}

\begin{remark}\label{R: Gama=D[x,s] no e-discreto} It is known that the principal ideal domain $\Gamma=D[x,s]$ of twisted polynomials over a finite-dimensional division algebra is not $e$-discrete, see \cite[(8.6)]{CB3}. This can also be derived from the preceding two statements as follows.

  From \cite[(1.1.22)]{J}, the center of $\Gamma$  has the form $Z=F[t]$, a polynomial algebra over a finite field extension $F$ of $k$, and $\Gamma$ is finitely generated over $Z$. So $Z$ is a principal ideal domain with infinitely many prime ideals. 
  
  Recall from,  the proof of (\ref{P: ideales max de Z y classes de similaridad de atomos}), that $\Gamma$ admits an infinite set of non-similar atoms ${\cal P}$. Each $p\in {\cal P}$ appears as an atomic factor of some two-sided element $b\in \Gamma$ such that $\Gamma b$ is a maximal ideal of $\Gamma$. Since $\Gamma b$ is maximal, it is a prime ideal of $\Gamma$, so $I=Z\cap \Gamma b$ is a prime (thus maximal) ideal of $Z$, and $Z/I$ is a simple $Z$-module. Therefore, from (\ref{L: cotas para endol de Gama-simples}), we obtain that  $\Endol{\Gamma/\Gamma p}\leq m\times \ell_Z(Z/I)=m$, where $m$ is the number of generators of the $Z$-module $\Gamma$.  

Thus, there are infinitely many simple $\Gamma$-modules of the form $\Gamma/\Gamma p$, which are in fact all finite-dimensional, with the same endolength, and $\Gamma$ is not $e$-discrete. 
\end{remark}

\section{The property EBTII}\label{S: EBTII}

In this section, we study some useful relations between the endolengths of indecomposable modules filtered by the same simple object and the property EBTII for bounded principal ideal domains.

\begin{proposition}\label{P: KM de inesc en U uniserial} Assume that ${\cal U}$ is a uniserial subcategory of $\Sigma\g\mod$ for some $k$-algebra $\Sigma$ and that we have a family  $\{M_n \mid n \in \hueca{N}\}$ of representatives 
of the isoclasses of the indecomposable $\Sigma$-modules in ${\cal U}$. Moreover, assume that we have almost split sequences in the category ${\cal U}$ of the form: 
$$\begin{matrix}\xi_1  : \  M_1  \rightmap{} M_2\rightmap{} M_1,\\
 \,\\
\zeta_n : \ M_n \rightmap{} M_{n+1}\oplus M_{n-1}\rightmap{}M_n, \hbox{ for } n \geq  2.\\
\end{matrix}$$
Set $E_{M_n}=\End_\Sigma(M_n)^{op}$ and $K_{M_n}= \End_\Sigma(M_n)^{op}/\Rad \End_\Sigma(M_n)^{op}$.  Then,  there is an isomorphism of $k$-algebras $K_{M_n}\cong K_{M_1}=E_{M_1}$, for all $n\in \hueca{N}$. 
\end{proposition}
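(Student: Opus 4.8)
The plan is to exploit the uniserial structure together with the almost split sequences to set up an inductive argument showing that the relevant endomorphism rings become, after quotienting by the radical, all isomorphic to the base division ring $K_{M_1}$. First I would observe that since $M_1$ is the unique simple object of ${\cal U}$, every endomorphism of $M_1$ (in ${\cal U}$, equivalently in $\Sigma\g\mod$) is either zero or an isomorphism, so $E_{M_1}=\End_\Sigma(M_1)^{op}$ is already a division algebra; hence $K_{M_1}=E_{M_1}$, which handles the base case $n=1$ and explains the last clause of the statement.

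Next I would unwind the consequences of uniseriality for the higher $M_n$. Because $M_n$ has a unique composition series inside ${\cal U}$, its submodule lattice is a chain $0\subset M_1^{(n)}\subset\cdots\subset M_n^{(n)}=M_n$ with each factor $\cong M_1$; in particular $\soc M_n\cong M_1$ and $M_n/\rad M_n\cong M_1$ (the top, since any proper submodule lies below the penultimate term of the chain). Any $\varphi\in \End_\Sigma(M_n)$ is a nonisomorphism precisely when $\Im\varphi$ is a proper submodule, i.e. when the induced map on the top $M_n/\rad M_n\cong M_1$ is zero; so $\Rad E_{M_n}$ is exactly the set of $\varphi$ inducing $0$ on the top, and $K_{M_n}=E_{M_n}/\Rad E_{M_n}$ embeds into $\End_\Sigma(M_n/\rad M_n)^{op}\cong\End_\Sigma(M_1)^{op}=K_{M_1}$ via $\varphi\mapsto\bar\varphi$. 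This already gives an injective $k$-algebra homomorphism $K_{M_n}\hookrightarrow K_{M_1}$; it remains to show surjectivity, equivalently that every endomorphism of the top lifts to $M_n$.

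For the lifting, I would use the almost split sequence $\zeta_n : M_n\to M_{n+1}\oplus M_{n-1}\to M_n$ together with induction. Given $\theta\in\End_\Sigma(M_n/\rad M_n)^{op}$, pull it back to some $\psi\in\End_\Sigma(M_n)$: the composite $M_n\twoheadrightarrow M_n/\rad M_n\xrightarrow{\theta}M_n/\rad M_n$ need not factor through $M_n$ directly, but one can argue via the minimal projective cover in ${\cal U}$, or more cleanly: the map $M_n\to M_n/\rad M_n\cong M_1=\soc M_n\hookrightarrow M_n$ is a nonzero, nonisomorphism endomorphism of $M_n$ when $n\geq 2$, and composing with the division-algebra action of $K_{M_1}$ on $M_1$ realizes every element of $K_{M_1}$ as the image (under the top map) of an element of $E_{M_n}$. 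Concretely, fix an iso $M_n/\rad M_n\cong M_1\cong\soc M_n$; for each $\lambda\in K_{M_1}=E_{M_1}$ let $\lambda\colon M_1\to M_1$ act, and form $e_\lambda:M_n\twoheadrightarrow M_n/\rad M_n\xrightarrow{\cong}M_1\xrightarrow{\lambda}M_1\xrightarrow{\cong}\soc M_n\hookrightarrow M_n$; then $e_\lambda\in E_{M_n}$ and $\overline{e_\lambda}=\lambda$ under the top identification (using that the composite $M_1\hookrightarrow M_n\twoheadrightarrow M_n/\rad M_n$ is the identity of $M_1$, which holds when $\ell(M_n)=1$ and must be checked not to vanish for larger $n$ — this is the subtle point, since for $n\geq 2$ the socle does not surject onto the top).

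The main obstacle I anticipate is precisely this last subtlety: for $n\geq 2$, $\soc M_n$ and $M_n/\rad M_n$ are abstractly isomorphic to $M_1$ but the natural composite $\soc M_n\hookrightarrow M_n\twoheadrightarrow M_n/\rad M_n$ is zero, so the naive lift fails and one genuinely needs the almost split sequence $\zeta_n$ (or an explicit basis adapted to the uniserial filtration) to produce an endomorphism inducing a prescribed nonzero map on the top. I expect the cleanest route is a downward/upward induction on $n$ using $\zeta_n$: an endomorphism of $M_n$ that is the identity on the top can be built by lifting through the irreducible epimorphism $M_{n+1}\to M_n$ and the irreducible monomorphism $M_{n-1}\to M_n$ appearing in the sequences $\xi_1,\zeta_m$, transporting the already-known identification $K_{M_{n-1}}\cong K_{M_1}$ upward; then dimension count (via $\dim_k M_n = n\cdot\dim_k K_{M_1}$, which follows from the uniserial filtration and \ref{L: dim vs endol}) forces $K_{M_n}\cong K_{M_1}$ since the injection $K_{M_n}\hookrightarrow K_{M_1}$ between finite-dimensional division $k$-algebras of equal $k$-dimension must be an isomorphism. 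Packaging it via this dimension comparison — once the injection $K_{M_n}\hookrightarrow K_{M_1}$ is established — may in fact bypass the explicit lifting entirely, provided one can independently compute $\dim_k K_{M_n}$; and that can be extracted from \ref{L: dim vs endol} applied to $M_n$ together with $\Endol{M_n}=n$, which itself would come from the forthcoming analysis of endolengths in uniserial subcategories.
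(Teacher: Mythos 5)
Your first half --- producing an injective $k$-algebra map $K_{M_n}\rightmap{}K_{M_1}$ by letting an endomorphism of $M_n$ act on the top factor of the unique composition series in ${\cal U}$ --- is sound and is essentially the paper's map $\overline{\varphi}_n$ (the paper realizes the top concretely as the cokernel of $M_{n-1}\rightmap{}M_n$ via the composite $\hat{\beta}_n=\beta_1\cdots\beta_{n-1}:M_n\rightmap{}M_1$ of the surjective irreducible maps coming from the almost split sequences; one small caution is that ``socle'' and ``top'' must be taken relative to the composition series in ${\cal U}$, since the simple object $M_1$ of ${\cal U}$ need not be a simple $\Sigma$-module). The genuine gap is the surjectivity of this map, which you never establish. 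Your concrete candidate lift $e_\lambda$, factoring through $\soc M_n$, you yourself observe induces the zero map on the top for $n\geq 2$, so it proves nothing; and the inductive lifting through the almost split sequences, which you correctly single out as ``the cleanest route,'' is only gestured at, whereas it is exactly where the work lies. The paper's argument is: given $f_1\in E_{M_1}$, the induction hypothesis provides $f_{n-1}\in E_{M_{n-1}}$ over it; the composite $f_{n-1}\beta_{n-1}:M_n\rightmap{}M_{n-1}$ is not a split epimorphism, so the almost split property of $\zeta_{n-1}$ factors it as $\beta_{n-1}f_n+\alpha_{n-2}g$, and the relations $\beta_1\alpha_1=0$ and $\beta_i\alpha_i=\alpha_{i-1}\beta_{i-1}$ among the irreducible maps annihilate the term involving $g$ after composing with $\hat{\beta}_{n-1}$, showing that $f_n$ lifts $f_1$.

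Your proposed shortcut via a dimension count is circular. You would need $\dim_kK_{M_n}=\dim_kK_{M_1}$, which you propose to extract from $\Endol{M_n}=n\times \Endol{M_1}$ (note: not $\Endol{M_n}=n$; likewise $\dim_kM_n=n\dim_kM_1=n\,\Endol{M_1}\dim_kK_{M_1}$, not $n\dim_kK_{M_1}$). But in this paper that endolength formula, (\ref{P: endol(Mn)=n endol(M1) en U unis}), is deduced \emph{from} the present proposition together with (\ref{L: dim vs endol}), and no independent computation of $\Endol{M_n}$ is available at this stage. So the surjectivity step cannot be bypassed and must be carried out by the inductive lifting described above.
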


\begin{proof}  Assume that $n\geq 2$. From our assumptions, we get that  there is a  chain of injective irreducible morphisms
$$M_1\rightmap{\alpha_1}M_2\rightmap{}\cdots\rightmap{}M_{n-1}\rightmap{\alpha_{n-1}}M_n$$
and a chain of surjective irreducible morphisms
$$M_n\rightmap{\beta_{n-1}}M_{n-1}\rightmap{}\cdots\rightmap{}M_2\rightmap{\beta_1}M_1,$$
such that 
$\beta_1\alpha_1=0;\alpha_1\beta_1=\beta_2\alpha_2;\,\cdots; \beta_i\alpha_i=\alpha_{i-1}\beta_{i-1};\cdots$.   Since $M_n$ is uniserial,  we have that each $f\in \End_\Sigma(M_n)$ induces a commutative diagram
$$\begin{matrix}
0&\rightmap{}&M_{n-1}&\rightmap{\alpha_{n-1}}&M_n&\rightmap{\hat{\beta}_n}&M_1&\rightmap{}&0\\
&&\shortrmapdown{f_{n-1}}&&\shortrmapdown{f}&&\shortrmapdown{f_1}&&\\
0&\rightmap{}&M_{n-1}&\rightmap{\alpha_{n-1}}&M_n&\rightmap{\hat{\beta}_n}&M_1&\rightmap{}&0\\
\end{matrix}$$
where $\hat{\beta}_n=\beta_1\beta_2\cdots\beta_{n-1}$. The recipe $f\longmapsto f_1$ defines a morphism of algebras $\varphi_n:E_{M_n}\rightmap{}E_{M_1}$. Since $E_{M_1}$ is a division algebra and $\Rad E_{M_n}$ consists of nilpotent morphisms, the map $\varphi_n$ induces an injective morphism of division algebras
$\overline{\varphi}_n:K_{M_n}\rightmap{}K_{M_1}$. It only remains to show that $\overline{\varphi}_n$ is surjective or, equivalently, that $\varphi_n$ is surjective.  We proceed to a proof by induction on $n$.

If $n=2$, we have an almost split sequence
$$\begin{matrix}0&\rightmap{}&M_1&\rightmap{\alpha_1}&M_2&\rightmap{\beta_1}&M_1&\rightmap{}&0.\\
\end{matrix}$$
Given any morphism $f_1:M_1\rightmap{}M_1$, the composition $f_1\beta_1:M_2\rightmap{}M_1$ is not an isomorphism. Hence, there is a morphism $f:M_2\rightmap{}M_2$ such that $\beta_1f=f_1\beta_1$. So $\varphi_2$ is surjective.

Assume that $n\geq 3$ and that $\varphi_{n-1}$ is surjective, and let us show that $\varphi_n$ is surjective. Consider the almost split sequence 
$$\begin{matrix} 0&\rightmap{}&M_{n-1}&\rightmap{ \ (\alpha_{n-1},\beta_{n-2})^t \ }&M_n\oplus M_{n-2}&\rightmap{ \ (\beta_{n-1},\alpha_{n-2}) \ }&M_{n-1}&\rightmap{}&0.\\
\end{matrix}$$
Given $f_1\in E_{M_1}$, by induction hypothesis, there is some $f_{n-1}\in E_{M_{n-1}}$ with $f_1\hat{\beta}_{n-1}=\hat{\beta}_{n-1} f_{n-1}$. Consider the composition $f_{n-1}\beta_{n-1}:M_n\rightmap{}M_{n-1}$, which is not an isomorphism, thus there is a morphism $h:M_n\rightmap{}M_n\oplus M_{n-2}$ such that $(\beta_{n-1},\alpha_{n-2})h =f_{n-1}\beta_{n-1}$. Denote the matrix components of $h$  as $h=(f_n,g)^t$. So, we get $\beta_{n-1} f_n+\alpha_{n-2} g=f_{n-1}\beta_{n-1}$. Therefore, we have 
$$\hat{\beta}_{n-1}\beta_{n-1}f_n+\hat{\beta}_{n-1}\alpha_{n-2}g=\hat{\beta}_{n-1}f_{n-1}\beta_{n-1}=f_1\hat{\beta}_{n-1}\beta_{n-1}=f_1\hat{\beta}_n.$$
Moreover, we have 
$$\hat{\beta}_{n-1}\alpha_{n-2}g
=
\beta_1\cdots\beta_{n-2}\alpha_{n-2}g
=
\beta_1\cdots \beta_{n-3}\alpha_{n-3}\beta_{n-3}g
=
\beta_1\alpha_1\beta_1\cdots\beta_{n-3}g
=
0.$$
Finally, we obtain $\hat{\beta}_nf_n=\hat{\beta}_{n-1}\beta_{n-1}f_n=f_1\hat{\beta}_n$, so $\varphi_n$ is surjective. 
\end{proof}

Given a bounded principal ideal domain $\Gamma$, we can apply this proposition  to the uniserial subcategories ${\cal U}_p$ which form $\Gamma$-mod in (\ref{L: Gama-mod es prod de uniseriales con simple}), see \cite[(6.5)]{bps}. So we have the following. 

\begin{corollary}\label{P: KM de inesc de Gamma-mod}
Let $\Gamma$ be a bounded principal ideal domain. For any  indecomposable 
finite-dimensional $\Gamma$-module $M$, set $E_M=\End_\Gamma(M)^{op}$ and $K_M= \End_\Gamma(M)^{op}/\Rad \End_\Gamma(M)^{op}$. Let  $S$ denote the simple socle of $M$, see (\ref{L: Gama-mod es prod de uniseriales con simple}). Then,  there is an isomorphism of $k$-algebras $K_M\cong K_S=E_S$. 
\end{corollary}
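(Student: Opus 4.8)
The plan is to deduce Corollary \ref{P: KM de inesc de Gamma-mod} directly from Proposition \ref{P: KM de inesc en U uniserial} by applying it to the right uniserial subcategory. First I would invoke Lemma \ref{L: Gama-mod es prod de uniseriales con simple}: the category $\Gamma\g\mod$ of finite-length $\Gamma$-modules decomposes as a direct sum of uniserial subcategories $\{{\cal U}_p\}_{p\in{\cal P}}$, where ${\cal P}$ is a complete set of representatives of the non-similar atoms of $\Gamma$, each ${\cal U}_p$ has a unique simple object, and the almost split sequences in ${\cal U}_p$ have exactly the shape $\xi_1: E_1^p\rightmap{}E_2^p\rightmap{}E_1^p$ and $\zeta_n: E_n^p\rightmap{}E_{n+1}^p\oplus E_{n-1}^p\rightmap{}E_n^p$ for $n\geq 2$ (the almost split sequences of $\Gamma\g\mod$ lie inside the corresponding uniserial subcategory, since $\Hom_\Gamma(E_i^p,E_j^q)=0$ for $p\neq q$, as used in the proof of that lemma).

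Next, given an indecomposable finite-dimensional $\Gamma$-module $M$, it belongs to exactly one of these subcategories, say $M\cong E_n^p$ for some $p\in{\cal P}$ and $n\in\hueca{N}$. The hypotheses of Proposition \ref{P: KM de inesc en U uniserial} are then met by ${\cal U}={\cal U}_p$ with $M_n:=E_n^p$, so we obtain an isomorphism of $k$-algebras $K_{M}=K_{E_n^p}\cong K_{E_1^p}=E_{E_1^p}$. Finally, I would identify $E_1^p$ with the simple socle of $M$: by Lemma \ref{L: Gama-mod es prod de uniseriales con simple} and its proof, the socle of every indecomposable in ${\cal U}_p$ is the unique simple object $E_1^p=S_p$ of ${\cal U}_p$, so writing $S=\soc M$ gives $S\cong E_1^p$ and hence $K_M\cong K_S=E_S$, with the last equality because $E_S=\End_\Gamma(S)^{op}$ is already a division algebra (so it equals its own $K$). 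This finishes the proof.

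There is essentially no obstacle here; the only point requiring a line of care is confirming that the almost split sequences referenced in Proposition \ref{P: KM de inesc en U uniserial} genuinely coincide with those of ${\cal U}_p$ inside $\Gamma\g\mod$, which is precisely the content of the ``moreover'' part of Lemma \ref{L: Gama-mod es prod de uniseriales con simple} together with the vanishing of $\Hom$ between modules in distinct ${\cal U}_p$'s. Thus the corollary is a formal specialization, and I would write it up in just a few sentences as above.
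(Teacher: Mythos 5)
Your proposal is correct and follows exactly the route the paper takes: apply Proposition (\ref{P: KM de inesc en U uniserial}) to the uniserial subcategory ${\cal U}_p$ containing $M$, with the decomposition and almost split sequences supplied by Lemma (\ref{L: Gama-mod es prod de uniseriales con simple}). Your extra care in identifying the socle with $E_1^p$ and noting $K_S=E_S$ matches what the paper leaves implicit.
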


\begin{proposition}\label{P: endol(Mn)=n endol(M1) en U unis}
Assume that ${\cal U}$ is a uniserial  subcategory of $\Sigma\g\mod$ satisfying the assumptions of (\ref{P: KM de inesc en U uniserial}). Then, if $M_n$ is an indecomposable $\Sigma$-module in ${\cal U}$ with length $n$ and socle $M_1$, we have 
$\Endol{M_n}=n\times \Endol{M_1}$. 

Thus, given $d\in \hueca{N}$, the category ${\cal U}$ has at most one indecomposable with endolength $d$.  
\end{proposition}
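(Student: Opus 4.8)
The plan is to deduce the statement directly from Lemma~\ref{L: dim vs endol} together with the computation of residue division algebras carried out in Proposition~\ref{P: KM de inesc en U uniserial}; the only additional ingredient is an elementary dimension count inside the uniserial category.

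First I would record that $M_1$ is the unique simple object of ${\cal U}$. Indeed, $M_1$ is an indecomposable object of length $1$, and conversely any simple object of ${\cal U}$ is an indecomposable object of length $1$, hence occurs among the representatives $M_n$, hence equals $M_1$. Consequently $M_n$, being of length $n$ in ${\cal U}$, admits a composition series $0=L_0\subset L_1\subset\cdots\subset L_n=M_n$ in ${\cal U}$ with every factor $L_i/L_{i-1}$ isomorphic to $M_1$ as an object of ${\cal U}$, hence isomorphic to $M_1$ as a $\Sigma$-module since ${\cal U}$ is a full subcategory. Adding up $k$-dimensions over this filtration gives $\dim_k M_n=n\,\dim_k M_1$.

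Next I would apply Lemma~\ref{L: dim vs endol} to $M_n$ and to $M_1$, obtaining $\Endol{M_n}=\dim_k M_n/\dim_k K_{M_n}$ and $\Endol{M_1}=\dim_k M_1/\dim_k K_{M_1}$. Since the hypotheses of Proposition~\ref{P: KM de inesc en U uniserial} are in force, that proposition supplies a $k$-algebra isomorphism $K_{M_n}\cong K_{M_1}$, so $\dim_k K_{M_n}=\dim_k K_{M_1}$. Combining the three equalities,
$$\Endol{M_n}=\frac{\dim_k M_n}{\dim_k K_{M_n}}=\frac{n\,\dim_k M_1}{\dim_k K_{M_1}}=n\times\Endol{M_1}.$$
For the last assertion I would note that $M_1\neq 0$ forces $\Endol{M_1}\geq 1$, so the numbers $\Endol{M_n}=n\,\Endol{M_1}$ are pairwise distinct as $n$ ranges over $\hueca{N}$; since every indecomposable of ${\cal U}$ is one of the $M_n$, a fixed $d\in\hueca{N}$ is the endolength of at most one indecomposable of ${\cal U}$.

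Regarding difficulty: there is essentially no obstacle left to overcome, the substantive work being the identification $K_{M_n}\cong K_{M_1}$ already established in Proposition~\ref{P: KM de inesc en U uniserial}. The only point requiring a little care is the verification that $M_1$ is the distinguished simple object of ${\cal U}$, so that the composition factors of $M_n$ in ${\cal U}$ are all isomorphic to $M_1$ and the dimension identity $\dim_k M_n=n\,\dim_k M_1$ is justified; everything else is formal bookkeeping with Lemma~\ref{L: dim vs endol}.
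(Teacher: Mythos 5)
Your argument is correct and follows essentially the same route as the paper: the dimension identity $\dim_k M_n=n\,\dim_k M_1$, the isomorphism $K_{M_n}\cong K_{M_1}$ from Proposition (\ref{P: KM de inesc en U uniserial}), and the formula $\Endol{M}=\dim_kM/\dim_kK_M$ of Lemma (\ref{L: dim vs endol}). Your explicit justification of the dimension count via the composition series in ${\cal U}$, and of the final uniqueness assertion, only spells out steps the paper leaves implicit.
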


\begin{proof} From the last proposition, we know that $\dim_kK_{M_n}=\dim_kK_{M_1}$, Then, from (\ref{L: dim vs endol}), we have 
$$\begin{matrix}
\Endol{M_n}&=&\dim_k M_n/\dim_kK_{M_n}\hfill\\
&=& 
(n\times \dim_kM_1)/\dim_kK_{M_1}\hfill\\
&=&
n\times \dim_kM_1/\dim_kK_{M_1}=n\times \Endol{M_1}.\hfill\\
\end{matrix}$$
\end{proof}

\begin{corollary}\label{C: endol de inesc vs endol de simple en U uniserial}
Assume that $\widetilde{\cal U}$ is a full subcategory of $\Sigma\g\mod$, for some $k$-algebra $\Sigma$, such that $\widetilde{\cal U}=\coprod_{p\in {\cal P}}{\cal U}_p$, where each ${\cal U}_p$ is a uniserial subcategory satisfying the assumptions of (\ref{P: KM de inesc en U uniserial}). Then,  if $\widetilde{\cal U}$ is not $e$-discrete, the index set ${\cal P}$ is infinite and $\widetilde{\cal U}$  satisfies EBTII. 
\end{corollary}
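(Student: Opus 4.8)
The goal is to show that a non-$e$-discrete $\widetilde{\cal U}=\coprod_{p\in{\cal P}}{\cal U}_p$ (each ${\cal U}_p$ a uniserial subcategory of the type in Proposition~\ref{P: KM de inesc en U uniserial}) forces ${\cal P}$ infinite and yields EBTII. First I would establish that ${\cal P}$ is infinite: if ${\cal P}$ were finite, then by Proposition~\ref{P: endol(Mn)=n endol(M1) en U unis} each ${\cal U}_p$ contains \emph{at most one} indecomposable of each endolength, so $\widetilde{\cal U}$ contains at most $\abs{\cal P}$ indecomposables of each endolength $d$, contradicting the assumption that $\widetilde{\cal U}$ is not $e$-discrete. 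Hence ${\cal P}$ is infinite.

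Next I would produce the required infinite sequence of endolengths. For each $p\in{\cal P}$, write $M^p_1$ for the simple object of ${\cal U}_p$ and $e_p=\Endol{M^p_1}$; by Proposition~\ref{P: endol(Mn)=n endol(M1) en U unis} the indecomposables of ${\cal U}_p$ have endolengths exactly $e_p, 2e_p, 3e_p,\dots$. The plan is to show that the set $\{e_p : p\in{\cal P}\}$ is infinite. Indeed, if this set were finite, say bounded by some $B$, then for each $d\le B$ and each $p$ with $e_p\mid d$ we get one indecomposable of endolength $d$ in ${\cal U}_p$ — but there are infinitely many such $p$ (since ${\cal P}$ is infinite but the $e_p$ take only finitely many values, some value is attained infinitely often), giving infinitely many non-isomorphic indecomposables of that fixed endolength $d$, while for $d>B$ there could still be finitely many; in any case we would contradict non-$e$-discreteness only if we pick $d$ to be a common multiple. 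More carefully: choose a value $e$ attained by infinitely many $p\in{\cal P}$; then the indecomposables $M^p_1$ for these $p$ are pairwise non-isomorphic (they lie in distinct ${\cal U}_p$) and all have endolength $e$, and likewise $\{M^p_n\}$ over these $p$ gives, for each fixed $n$, an infinite family of endolength $ne$. Taking $d_i:=ie$ for $i=1,2,3,\dots$ produces the strictly increasing sequence demanded by Definition~\ref{D: def de EBTII}, with the infinite family $\{M^p_i\}_{p}$ (ranging over the infinitely many $p$ with $e_p=e$) witnessing endolength $d_i$.

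The one point requiring care is the \emph{non-isomorphism across different $p$}: I need that $M^p_n\not\cong M^q_m$ when $p\ne q$. This is immediate from the hypothesis $\widetilde{\cal U}=\coprod_{p}{\cal U}_p$, which I read as a direct-sum (coproduct) decomposition of abelian categories, so objects from distinct summands share no common indecomposable direct summand; equivalently $\Hom(M^p_n,M^q_m)=0=\Hom(M^q_m,M^p_n)$ for $p\ne q$, exactly as recorded in the proof of Lemma~\ref{L: Gama-mod es prod de uniseriales con simple} via \cite[(6.5)]{bps}. Granting this, the families constructed above are genuinely infinite families of pairwise non-isomorphic indecomposables.

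The main obstacle — really the only subtlety — is the dichotomy in the previous paragraph's argument: from non-$e$-discreteness one must extract not just infinitely many indecomposables of \emph{some} fixed endolength, but an infinite \emph{sequence} of endolengths each realized infinitely often. I would handle it by the pigeonhole step above: either some single endolength value $e$ occurs among infinitely many of the $e_p$ (then scaling by $n$ gives the whole sequence $\{ne\}_n$ at once), or the values $e_p$ themselves form an infinite set $e_{p_1}<e_{p_2}<\cdots$, in which case — since non-$e$-discreteness already guarantees \emph{some} endolength $d$ with infinitely many indecomposables, and within a single ${\cal U}_p$ each endolength occurs once — that $d$ must be a common multiple realized across infinitely many distinct ${\cal U}_p$, and then $\{nd\}_n$ again works. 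Either way one lands on Definition~\ref{D: def de EBTII}, completing the proof.
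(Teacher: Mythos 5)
Your argument is correct and is essentially the paper's: both rest on the formula $\Endol{M^p_n}=n\times\Endol{M^p_1}$ from (\ref{P: endol(Mn)=n endol(M1) en U unis}), the consequent fact that each ${\cal U}_p$ contains at most one indecomposable of a given endolength, and the coproduct decomposition forcing an infinite family of fixed endolength $d$ to spread over infinitely many blocks. The only (harmless) differences are that the paper pigeonholes on the socle endolengths (all bounded by $d$) to extract a common value $d_1$ and then uses the sequence $\{nd_1\}_n$, whereas you use $\{nd\}_n$ directly, and that the case distinction in your last paragraph is redundant, since the argument of your second case works unconditionally.
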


\begin{proof} If the subcategory $\widetilde{\cal U}$ is not $e$-discrete, there is some number $d$ and an infinite family $\{M(i)\}_i$ of non-isomorphic finite-dimensional  indecomposable  $\Sigma$-modules in $\widetilde{\cal U}$ with common endolength $\Endol{M(i)}=d$. Denote by $S(i)$ the simple socle of each $\Sigma$-module $M(i)$ and by $n_i$ its length. For all $i$, we have 
$$d=\Endol{M(i)}=n_i\times \Endol{S(i)},$$ and the endolength of all these simple modules is bounded by $d$. If $S(i)\cong S(j)$, for some $i$, we would have 
$$n_i\times \Endol{S(i)}=\Endol{M(i)}=d=\Endol{M(j)}=n_j\times \Endol{S(j)}.$$
Thus, $M(i)$ and $M(j)$ have the same length and the same socle, which implies that they are isomorphic, thus $i=j$.  So, there is some $d_1<d$ and an infinite family of non-isomorphic simple $\Sigma$-modules $S(i)$ with the same endolength $d_1$.

Hence, for each one of these simples $S(i)$, the indecomposable module $N_n(i)$ with length $n$ and socle $S(i)$ has endolength $\Endol{N_n(i)}=n\times d_1$. The infinite family of non-isomorphic finite-dimensional indecomposable $\Sigma$-modules $\{N_n(i)\}_i$  in $\widetilde{\cal U}$ with endolength $n\times d_1$ can be  constructed for any $n$. 
\end{proof}

\begin{corollary}\label{C: endol de inesc vs endol del soclo en Gamma-mod} Let $\Gamma$ be a bounded principal ideal domain.  If $M_n$ is the indecomposable $\Gamma$-module with length $n$ and socle $M_1$, we have
$$\Endol{M_n}=n\times \Endol{M_1}.$$
Moreover, if $\Gamma$ is not $e$-discrete  it satisfies satisfies EBTII. 
\end{corollary}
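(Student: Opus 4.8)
The plan is to read off both assertions directly from the structural results already in hand for $\Gamma\g\mod$. First I would invoke Lemma \ref{L: Gama-mod es prod de uniseriales con simple}: since $\Gamma$ is a bounded principal ideal domain, the category of finite-dimensional $\Gamma$-modules coincides with the category of finite-length $\Gamma$-modules (every finite-length $\Gamma$-module is finite-dimensional, as recorded in the proof of that lemma), and it decomposes as a coproduct $\coprod_{p\in\mathcal{P}}\mathcal{U}_p$ of uniserial subcategories, where $\mathcal{U}_p$ has the unique simple object $E_1^p$ and carries almost split sequences $\xi_1:E_1^p\to E_2^p\to E_1^p$ and $\zeta_n:E_n^p\to E_{n+1}^p\oplus E_{n-1}^p\to E_n^p$ for $n\geq 2$. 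These are exactly the data required in the hypotheses of Proposition \ref{P: KM de inesc en U uniserial}, so each $\mathcal{U}_p$ satisfies those hypotheses (this is also how Corollary \ref{P: KM de inesc de Gamma-mod} was obtained).

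For the first assertion, let $p\in\mathcal{P}$ be the atom with $M_1\cong E_1^p$. Then $M_n$ lies in the uniserial subcategory $\mathcal{U}_p$, it is indecomposable of length $n$ with socle $M_1$, and Proposition \ref{P: endol(Mn)=n endol(M1) en U unis} applied to $\mathcal{U}_p$ gives $\Endol{M_n}=n\times\Endol{M_1}$ at once.

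For the EBTII assertion, I would apply Corollary \ref{C: endol de inesc vs endol de simple en U uniserial} with $\widetilde{\mathcal{U}}=\Gamma\g\mod=\coprod_{p\in\mathcal{P}}\mathcal{U}_p$: since, by hypothesis, $\Gamma$ is not $e$-discrete, the subcategory $\widetilde{\mathcal{U}}$ is not $e$-discrete, and the corollary yields that $\widetilde{\mathcal{U}}$, hence $\Gamma$, satisfies EBTII.

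There is essentially no obstacle here; the content sits entirely in the earlier propositions, and the proof is a bookkeeping matter of matching hypotheses. The only point deserving a sentence of care is the identification of the category of finite-dimensional $\Gamma$-modules with the category of finite-length $\Gamma$-modules, so that the decomposition of Lemma \ref{L: Gama-mod es prod de uniseriales con simple} and the almost split sequences there apply verbatim to the subcategories $\mathcal{U}_p$ entering Propositions \ref{P: KM de inesc en U uniserial}, \ref{P: endol(Mn)=n endol(M1) en U unis}, and Corollary \ref{C: endol de inesc vs endol de simple en U uniserial}.
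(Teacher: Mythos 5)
Your proposal is correct and follows essentially the same route as the paper: the paper's own proof is a one-line citation of \cite[(6.5)]{bps} (the source of the uniserial decomposition and almost split sequences recalled in (\ref{L: Gama-mod es prod de uniseriales con simple})), of (\ref{P: KM de inesc de Gamma-mod}) for the endolength formula via (\ref{P: endol(Mn)=n endol(M1) en U unis}), and of (\ref{C: endol de inesc vs endol de simple en U uniserial}) for EBTII. You have simply spelled out the hypothesis-matching that the paper leaves implicit.
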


\begin{proof} It follows from \cite[(6.5)]{bps}, (\ref{C: endol de inesc vs endol de simple en U uniserial}), and (\ref{P: KM de inesc de Gamma-mod}). 
\end{proof}

\begin{lemma}\label{L: EBTII se traslada con funtores que controlan endolength} Assume that a functor  $G:\Sigma\g\mod\rightmap{}\Lambda\g\mod$  preserves indecomposables, reflects isomorphism classes, and controls endolength of indecomposables. 
Then, if  EBTII holds for  $\Sigma\g\mod$, it also holds for  $\Lambda\g\mod$. 
\end{lemma}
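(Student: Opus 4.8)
The plan is to unwind the definitions and transport a witnessing sequence for EBTII through the functor $G$. Suppose EBTII holds for $\Sigma\g\mod$, so there is an infinite sequence of natural numbers $d_1<d_2<\cdots$ such that for each $d_i$ there is an infinite family $\{M^{(i)}_j\}_j$ of non-isomorphic finite-dimensional indecomposable $\Sigma$-modules with $\Endol{M^{(i)}_j}=d_i$. Fix the constants $c,c'\in\hueca{N}$ from the definition of ``controls endolength of indecomposables'' for $G$, so that $\Endol{N}\leq c\times\Endol{G(N)}$ and $\Endol{G(N)}\leq c'\times\Endol{N}$ for every indecomposable $N\in\Sigma\g\mod$.

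First I would apply $G$ to each family: since $G$ preserves indecomposables, every $G(M^{(i)}_j)$ is a finite-dimensional indecomposable $\Lambda$-module, and since $G$ reflects isomorphism classes, the modules $\{G(M^{(i)}_j)\}_j$ are pairwise non-isomorphic for fixed $i$. From the two-sided bound, $d_i/c\leq \Endol{G(M^{(i)}_j)}\leq c'd_i$, so all the endolengths in the $i$-th image family lie in the finite interval $[\lceil d_i/c\rceil, c'd_i]$. By the pigeonhole principle, for each $i$ there is a single value $e_i$ in this interval such that infinitely many $j$ satisfy $\Endol{G(M^{(i)}_j)}=e_i$; discarding the rest, we obtain an infinite family $\{N^{(i)}_j\}_j$ of pairwise non-isomorphic finite-dimensional indecomposable $\Lambda$-modules all of endolength $e_i$.

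The remaining task is to extract from $\{e_i\}_{i\in\hueca{N}}$ an infinite strictly increasing subsequence, which is where the one genuinely non-formal point lies: a priori the $e_i$ need not be monotone. However, $e_i\geq d_i/c\to\infty$ as $i\to\infty$, so the sequence $(e_i)_i$ is unbounded; hence it takes infinitely many distinct values, and we may choose indices $i_1<i_2<\cdots$ with $e_{i_1}<e_{i_2}<\cdots$ (for instance, inductively pick $i_{m+1}$ with $e_{i_{m+1}}>e_{i_m}$, possible since $(e_i)$ is unbounded). Setting $d'_m:=e_{i_m}$ gives an infinite strictly increasing sequence $d'_1<d'_2<\cdots$, and for each $d'_m$ the family $\{N^{(i_m)}_j\}_j$ is an infinite family of non-isomorphic finite-dimensional indecomposable $\Lambda$-modules of endolength $d'_m$. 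Thus $\Lambda\g\mod$ satisfies EBTII.

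I expect the main (indeed only) obstacle to be the bookkeeping around monotonicity: one must not assume the $e_i$ inherit the ordering of the $d_i$, and instead argue via unboundedness, which is guaranteed by the lower bound $\Endol{N}\leq c\times\Endol{G(N)}$ — this is precisely where the left-hand inequality in ``controls endolength'' is used, while the right-hand inequality is what keeps each image family confined to finitely many endolengths so that the pigeonhole step works. Everything else is a direct translation through ``preserves indecomposables'' and ``reflects isomorphism classes''.
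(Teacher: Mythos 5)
Your proof is correct and follows essentially the same route as the paper: pigeonhole on each image family (using the upper bound $\Endol{G(N)}\leq c'\Endol{N}$ to confine the endolengths to a finite set), then the lower bound $\Endol{N}\leq c\Endol{G(N)}$ to force the resulting endolengths $e_i$ to be unbounded so that a strictly increasing subsequence exists. The only cosmetic difference is that the paper builds the increasing sequence greedily (choosing each new $d_{i+1}>c\,d'_i$ before applying $G$) whereas you extract the increasing subsequence afterwards from unboundedness; both are the same argument.
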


\begin{proof} By assumption, there are $c,c'\in \hueca{N}$ such that, for any indecomposable $M\in \Sigma\g\mod$, we have  
$$   \Endol{M}\leq c\times\Endol{GM} \hbox{ \ and \ } \Endol{GM}\leq c'\times\Endol{M},$$
and there are infinitely many endolengths $d$'s such that for each one of these $d$'s there are infinitely many non-isomorphic finite-dimensional indecomposable $\Sigma$-modules with the same  endolength $d$. 

If we start with such an infinite family  
$\{M_i\}_i$ in $\Sigma\g\mod$ with $\Endol{M}=d_1$, we obtain an infinite subfamily  $\{GM_j\}_j$ of the family of images of $\{GM_i\}_i$ with the same endolength $\Endol{GM_j}=d'_1\leq c'd_1$, because $\Endol{GM_i}\leq c'\times \Endol{M_i}$.

Now, by assumption, we can choose a sufficiently large endolength $d_2$ (say with $d_2>c d'_1$) for which there is an infinite family of non-isomorphic finite-dimensional indecomposables  $\{N_i\}_i$ in  $\Sigma\g\mod$  with $\Endol{N_i}=d_2$. 
As before, we obtain an infinite subfamily  $\{GN_j\}_j$ of the family of images $\{GN_i\}_i$ with common endolength  $\Endol{GN_j}=d'_2\leq c'd_2$, again because $\Endol{GN_i}\leq c'\times \Endol{N_i}$. Since $G$ controls endolength of indecomposables, we obtain  
$$cd'_1<d_2=\Endol{N_i}\leq c\times \Endol{GN_i}=cd'_2.$$
Thus, $d'_1<d'_2$. We can iterate the preceding procedure to construct inductively endolengths $d'_1<d'_2<\cdots$ and families of non-isomorphic finite-dimensional indecomposables in $\Lambda\g\mod$ with the same endolength $d'_i$, for each $d'_i$.   
\end{proof}

 \noindent{\bf Proof of Theorem (\ref{T: EBTII}):}   From (\ref{main exact thm for algs with PID's}), we have  a  representation embedding $G:\Gamma\g\mod\rightmap{}\Lambda\g\mod$, where $\Gamma$ is a bounded principal ideal domain, which is not $e$-discrete.   The algebra $\Gamma$ satisfies EBTII as a consequence of (\ref{C: endol de inesc vs endol del soclo en Gamma-mod}). 
Our theorem follows from (\ref{L: EBTII se traslada con funtores que controlan endolength}), because $G$  controls  endolength of indecomposables. \hfill$\square$
 \bigskip

 In the following proposition we show that the formula relating the endolength of indecomposables in $\Gamma\g\mod$ described in (\ref{C: endol de inesc vs endol del soclo en Gamma-mod}), is preserved by 
 the exact embedding $G:\Gamma\g\mod\rightmap{}\Lambda\g\mod$ of (\ref{main exact thm for algs with PID's}).  

 \begin{proposition}\label{P: endolength formula} Let $G:\Gamma\g\mod\rightmap{}\Lambda\g\mod$ be the representation embedding of (\ref{main exact thm for algs with PID's}). Consider a family of indecomposable $\Gamma$-modules $\{M_n\}_{n\in \hueca{N}}$ such that each $M_n$ has socle $M_1$ and length $n$. Then, we have 
 $$\Endol{G(M_n)}=n\times \Endol{G(M_1)}.$$  
 \end{proposition}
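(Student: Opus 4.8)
The goal is to show that the formula $\Endol{M_n}=n\times\Endol{M_1}$ from (\ref{C: endol de inesc vs endol del soclo en Gamma-mod}) is transported verbatim, with no distortion by the constants $c,c'$, through the specific embedding $G$ of (\ref{main exact thm for algs with PID's}). The plan is to exploit the tensor-product shape of $G$. Recall that $G$ is isomorphic to $Z\otimes_\Gamma-$ for a $\Lambda\g\Gamma$-bimodule $Z$ which, by (\ref{main exact thm for algs with PID's}), is \emph{free of finite rank by the right}, say $Z_\Gamma\cong\Gamma^r$. The key point is that tensoring with a free right module of rank $r$ multiplies $k$-dimension exactly by $r$: $\dim_kG(N)=\dim_k(\Gamma^r\otimes_\Gamma N)=r\times\dim_kN$ for every $N\in\Gamma\g\mod$.

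First I would invoke (\ref{L: dim vs endol}): since $G$ is a representation embedding it preserves indecomposability, and the full-and-faithfulness underlying it (or rather the fact that it reflects isomorphism classes and preserves indecomposables together with the standard properties of these reduction/localization functors) gives an isomorphism of division algebras $K_{M_n}\cong K_{G(M_n)}$ for each $n$; more precisely, $G$ induces an algebra isomorphism $\End_\Gamma(M_n)^{op}\cong\End_\Lambda(G(M_n))^{op}$ and hence on the residue division algebras. Actually the cleaner route is: $G=Z\otimes_\Gamma-$ with $Z_\Gamma$ free, so in particular $G$ is exact and faithful, and by the argument pattern of (\ref{L: endols y repembed fieles y plenos}) the relevant endomorphism algebras match up; but for the dimension-counting argument all I really need is $\dim_kK_{G(M_n)}=\dim_kK_{G(M_1)}$, which will follow once I know these residue fields are the same for all $n$.

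The core computation then runs as follows. By (\ref{L: dim vs endol}) applied in $\Lambda\g\mod$,
$$\Endol{G(M_n)}=\frac{\dim_kG(M_n)}{\dim_kK_{G(M_n)}}=\frac{r\times\dim_kM_n}{\dim_kK_{G(M_n)}}=\frac{r\times n\times\dim_kM_1}{\dim_kK_{G(M_n)}}.$$
Using (\ref{C: endol de inesc vs endol del soclo en Gamma-mod}) we have $\dim_kM_n=n\times\dim_kM_1$ and, via (\ref{P: KM de inesc de Gamma-mod}), $K_{M_n}\cong K_{M_1}$; transporting along $G$ gives $K_{G(M_n)}\cong K_{M_n}\cong K_{M_1}\cong K_{G(M_1)}$, so $\dim_kK_{G(M_n)}=\dim_kK_{G(M_1)}$. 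Substituting,
$$\Endol{G(M_n)}=\frac{r\times n\times\dim_kM_1}{\dim_kK_{G(M_1)}}=n\times\frac{r\times\dim_kM_1}{\dim_kK_{G(M_1)}}=n\times\frac{\dim_kG(M_1)}{\dim_kK_{G(M_1)}}=n\times\Endol{G(M_1)},$$
again by (\ref{L: dim vs endol}) in the last step. This completes the argument.

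The step I expect to require the most care is the identification $K_{G(M_n)}\cong K_{G(M_1)}$, i.e. checking that $G$ genuinely induces isomorphisms on the residue division algebras of the local endomorphism rings. This is where the precise hypothesis that $G$ is built from full-and-faithful reduction functors (or from a free-by-the-right tensor functor, which on finite-dimensional modules is full and faithful on the relevant subcategory) matters: a functor that merely preserves indecomposables and reflects isomorphism classes need not preserve these residue fields. One could sidestep this by tracing back through the construction in (\ref{main exact thm for algs with PID's}) — $G=GF$ for a localization restriction functor $F$ which is full and faithful, composed with the earlier $G$ of (\ref{main thm for algs with PID's}) which is itself a composition of $H$ and an exact full-and-faithful $\Psi$ — and noting that each factor induces isomorphisms on endomorphism algebras, hence on their residue fields; alternatively one appeals directly to (\ref{R: Phi preserves endolength of indecs}) and (\ref{L: endols y repembed fieles y plenos})-style reasoning. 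Once that bookkeeping is in place, everything else is the elementary dimension count above.
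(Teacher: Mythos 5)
Your overall strategy is the same as the paper's: both arguments reduce the identity to (\ref{L: dim vs endol}) by establishing (a) $\dim_kG(M_n)=n\times\dim_kG(M_1)$ and (b) $K_{G(M_n)}\cong K_{G(M_1)}$. For (a) you use that $Z$ is free of finite rank $r$ by the right, so that $\dim_kG(N)=r\times\dim_kN$; the paper instead applies the exact functor $G$ to the sequences $0\to M_{n-1}\to M_n\to M_1\to 0$ and inducts on $n$. Both are fine (a small mislabel: the fact $\dim_kM_n=n\times\dim_kM_1$ comes from the composition series of $M_n$ in the uniserial category ${\cal U}_p$ of (\ref{L: Gama-mod es prod de uniseriales con simple}), not from (\ref{C: endol de inesc vs endol del soclo en Gamma-mod}), which is a statement about endolengths).

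The one assertion in your write-up that is actually false is that $G$ induces an algebra isomorphism $\End_\Gamma(M_n)^{op}\cong\End_\Lambda(G(M_n))^{op}$, and likewise that ``each factor induces isomorphisms on endomorphism algebras.'' The functor $G$ is not full and faithful: in its factorization through $\Psi$, $\LQ_{\Q}$, $F$, $\Xi_\Lambda$ and $\Cok$, the functor $\LQ_{\Q}$ is faithful but not full (the endomorphism algebra of $\LQ_{\Q}(N)$ acquires the extra summand $\hueca{E}^1_N$, see (\ref{R: sobre quasi-dits})), and $\Cok$ is full but not faithful. What is true, and what the paper proves as its Claim 1, is the weaker statement that each factor carries the Wedderburn complement $K_{M_n}$ of $\End_\Gamma(M_n)^{op}$ isomorphically onto a Wedderburn complement of the endomorphism algebra of the image: for $\Psi$, $F$ and $\Xi_\Lambda$ because they are full and faithful, for $\LQ_{\Q}$ by the splitting established in (\ref{R: sobre quasi-dits}), and for $\Cok$ because it is full, so the image of $K_{M_n}$ together with the radical exhausts $\End_\Lambda(G(M_n))^{op}$. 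This yields $K_{G(M_n)}\cong K_{M_n}\cong K_{M_1}\cong K_{G(M_1)}$, which is all your dimension count needs. So the gap is repairable exactly along the lines you sketch (``tracing back through the construction''), but the repair must go through this radical-splitting argument rather than through fullness of $G$.
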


\begin{proof} By construction, the functor $G$ is the following composition
$$\Gamma\g\mod\rightmap{\Psi}\Q\g\mod\rightmap{\LQ_{\Q}}
{\cal Q}\g\mod\rightmap{F}
{\cal D}\g\mod\rightmap{\Xi}{\cal P}^1(\Lambda)\rightmap{\Cok}
\Lambda\g\mod,$$
where ${\cal Q}$ is a quasi-minimal ditalgebra, $\Q$ is its minimal algebra, $\Psi$ is a full and faithful functor, $F$ is a composition of reduction functors, so it is full and faithful, and $\Cok$ is a full functor. 
\medskip

\noindent\emph{Claim 1: There is an isomorphism of $k$-algebras $K_{G(M_n)}\cong K_{G(M_1)}$.}
\medskip

We know that $\End_\Gamma(M_n)^{op}=K_{M_n}\oplus \Rad\End_\Gamma(M_n)^{op}$. 
Since $\Psi$ is full and faithful, we have $\End_\Q(\Psi(M_n))^{op}=\Psi(K_{M_n})\oplus \Rad\End_\Q(\Psi(M_n))^{op}$,  for all $n$. Moreover, from 
 (\ref{R: sobre quasi-dits}), we know that  $\End_{\cal Q}(\LQ_{\Q}(\Psi(M_n)))^{op}=\LQ_{\Q}(K_{\Psi(M_n)})\oplus \Rad\End_{\cal Q}(\LQ_{\Q}(\Psi(M_n)))^{op}$. 
Since the functors $F$ and $\Xi$ are full and faithful, we get 
$$\End_{{\cal P}^1(\Lambda)}(\Xi F\LQ_{\Q}(\Psi(M_n)))^{op}=\Xi F \LQ_{\Q} \Psi(K_{M_n})\oplus \Rad\End_{{\cal P}^1(\Lambda)}(\Xi F\LQ_{\Q}\Psi(M_n))^{op}.$$
Using that   the functor $\Cok$ is full, we obtain 
$$\End_\Lambda(G(M_n))^{op}=\Cok\Xi F\LQ_{\Q}\Psi(K_{M_n})\oplus \Rad\End_{\Lambda}(\Cok\Xi F\LQ_{\Q}\Psi(M_n))^{op}.$$
Hence, $\Cok \Xi F \LQ_{\Q}\Psi(K_M)$ is a division algebra isomorphic to $K_{G(M_n)}$. Thus, for each $n$, we have  $K_{M_n}\cong  K_{G(M_n)}$, for all $n$.  From (\ref{P: KM de inesc de Gamma-mod}), we know that $K_{M_n}\cong K_{M_1}$. Thus the formula of our Claim 1 holds.
\medskip

\noindent\emph{Claim 2: We have $\dim_kG(M_n)=n\times \dim_kG(M_1)$.}
\medskip

Indeed, for $n\geq 2$, we have exact sequences in $\Gamma\g\mod$ of the form 
$$0\rightmap{}M_{n-1}\rightmap{}M_n\rightmap{}M_1\rightmap{}0,$$
which are mapped by the exact functor $G$ on the exact sequences 
$$0\rightmap{}G(M_{n-1})\rightmap{}G(M_n)\rightmap{}G(M_1)\rightmap{}0$$
in $\Lambda\g\mod$. The the announced formula follows by an easy induction from the additivity of dimension for exact sequences.

Finally, using the preceding Claim 1, Claim 2, and (\ref{L: dim vs endol}), we have 
$$\begin{matrix}
\Endol{G(M_n)}
&=&
\dim_kG(M_n)/\dim_kK_{G(M_n)}\hfill\\
&=&
n\times \dim_kG(M_1)/\dim_kK_{G(M_1)}\hfill\\
&=&
n\times \Endol{G(M_1)}.\hfill\\
\end{matrix}$$ 
\end{proof}

\hskip2cm

\vbox{\noindent R. Bautista\\
Centro de Ciencias Matem\'aticas\\
Universidad Nacional Aut\'onoma de M\'exico\\
Morelia, M\'exico\\
raymundo@matmor.unam.mx\\}

\vbox{\noindent E. P\'erez\\
Facultad de Matem\'aticas\\
Universidad Aut\'onoma de Yucat\'an\\
M\'erida, M\'exico\\
jperezt@correo.uady.mx\\}

\vbox{\noindent L. Salmer\'on\\
Centro de Ciencias  Matem\'aticas\\
Universidad Nacional Aut\'onoma de M\'exico\\
Morelia, M\'exico\\
salmeron@matmor.unam.mx\\}

\end{document}